\documentclass[12pt, reqno]{amsart}
\usepackage{txfonts}      
\usepackage{amssymb}
\usepackage{eucal}
\usepackage{amsmath}
\usepackage{amsthm}
\usepackage{amscd}
\usepackage[dvips]{color}
\usepackage{multicol}
\usepackage[all]{xy}           
\usepackage{graphicx}
\usepackage{color}
\usepackage{colordvi}
\usepackage{xspace}
\usepackage{tikz}
\usepackage{mathrsfs}
\usepackage{ifpdf}
\ifpdf
 \usepackage[colorlinks,final,backref=page,hyperindex]{hyperref}
\else
 \usepackage[colorlinks,final,backref=page,hyperindex,hypertex]{hyperref}
\fi

\newcommand {\emptycomment}[1]{} 

\newtheorem{theorem}{Theorem}[section]
\newtheorem{prop}[theorem]{Proposition}
\newtheorem{lemma}[theorem]{Lemma}
\newtheorem{coro}[theorem]{Corollary}

\theoremstyle{definition}
\newtheorem{defn}[theorem]{Definition}
\newtheorem{remark}[theorem]{Remark}
\newtheorem{exam}[theorem]{Example}

\newtheorem{prop-def}[theorem]{Proposition-Definition}
\newtheorem{coro-def}[theorem]{Corollary-Definition}

\newcommand{\nc}{\newcommand}
\nc{\tred}[1]{\textcolor{red}{#1}}
\nc{\tblue}[1]{\textcolor{blue}{#1}}
\nc{\tgreen}[1]{\textcolor{green}{#1}}
\nc{\tpurple}[1]{\textcolor{purple}{#1}}
\nc{\btred}[1]{\textcolor{red}{\bf #1}}
\nc{\btblue}[1]{\textcolor{blue}{\bf #1}}
\nc{\btgreen}[1]{\textcolor{green}{\bf #1}}
\nc{\btpurple}[1]{\textcolor{purple}{\bf #1}}
\nc{\NN}{{\mathbb N}}

\renewcommand{\frak}{\mathfrak}

\nc{\vsa}{\vspace{-.1cm}} \nc{\vsb}{\vspace{-.2cm}}
\nc{\vsc}{\vspace{-.3cm}} \nc{\vsd}{\vspace{-.4cm}}
\nc{\vse}{\vspace{-.5cm}}

\renewcommand{\textbf}[1]{}

\newcommand{\delete}[1]{}

\nc{\mlabel}[1]{\label{#1}}  
\nc{\mcite}[1]{\cite{#1}}  
\nc{\mref}[1]{\ref{#1}}  
\nc{\meqref}[1]{\eqref{#1}}  
\nc{\mbibitem}[1]{\bibitem{#1}} 

\delete{
\nc{\mlabel}[1]{\label{#1}  
{\hfill \hspace{1cm}{\tt{{\ }\hfill(#1)}}}}
\nc{\mcite}[1]{\cite{#1}{{\tt{{\ }(#1)}}}}  
\nc{\mref}[1]{\ref{#1}{{\tt{{\ }(#1)}}}}  
\nc{\meqref}[1]{\eqref{#1}{{\tt{{\ }(#1)}}}}  
\nc{\mbibitem}[1]{\bibitem[\bf #1]{#1}} 
}

\nc{\opa}{\ast} \nc{\opb}{\odot}  \nc{\pa}{\frakL}
\nc{\arr}{\rightarrow} \nc{\lu}[1]{(#1)} \nc{\mult}{\mrm{mult}}
\nc{\diff}{\mathfrak{Diff}}
\nc{\opc}{\sharp}\nc{\opd}{\natural}
\nc{\dpt}{\mathrm{d}}
\nc{\tforall}{\text{ for all }}
\nc{\diam}{alternating\xspace}
\nc{\Diam}{Alternating\xspace}
\nc{\cdiam}{alternating\xspace}
\nc{\Cdiam}{Alternating\xspace}
\nc{\AW}{\mathcal{A}}
\nc{\rba}{Rota-Baxter algebra\xspace}

\nc{\ari}{\mathrm{ar}}

\nc{\lef}{\mathrm{lef}}

\nc{\Sh}{\mathrm{ST}}

\nc{\Cr}{\mathrm{Cr}}

\nc{\st}{{Schr\"oder tree}\xspace}
\nc{\sts}{{Schr\"oder trees}\xspace}

\nc{\vertset}{\Omega} 

\nc{\pb}{{\mathrm{pb}}}
\nc{\Lf}{{\mathrm{Lf}}}

\nc{\lft}{{left tree}\xspace}
\nc{\lfts}{{left trees}\xspace}

\nc{\fat}{{fundamental averaging tree}\xspace}

\nc{\fats}{{fundamental averaging trees}\xspace}
\nc{\avt}{\mathrm{Avt}}

\nc{\rass}{{\mathit{RAss}}}

\nc{\aass}{{\mathit{AAss}}}

\nc{\vin}{{\mathrm Vin}}    
\nc{\lin}{{\mathrm Lin}}    
\nc{\inv}{\mathrm{I}n}
\nc{\gensp}{V} 
\nc{\genbas}{\mathcal{V}} 
\nc{\bvp}{V_P}     
\nc{\gop}{{\,\omega\,}}     

\nc{\bin}[2]{ (_{\stackrel{\scs{#1}}{\scs{#2}}})}  
\nc{\binc}[2]{ \left (\!\! \begin{array}{c} \scs{#1}\\
    \scs{#2} \end{array}\!\! \right )}  
\nc{\bincc}[2]{  \left ( {\scs{#1} \atop
    \vspace{-1cm}\scs{#2}} \right )}  
\nc{\bs}{\bar{S}} \nc{\cosum}{\sqsubset} \nc{\la}{\longrightarrow}
\nc{\rar}{\rightarrow} \nc{\dar}{\downarrow} \nc{\dprod}{**}
\nc{\dap}[1]{\downarrow \rlap{$\scriptstyle{#1}$}}
\nc{\md}{\mathrm{dth}} \nc{\uap}[1]{\uparrow
\rlap{$\scriptstyle{#1}$}} \nc{\defeq}{\stackrel{\rm def}{=}}
\nc{\disp}[1]{\displaystyle{#1}} \nc{\dotcup}{\
\displaystyle{\bigcup^\bullet}\ } \nc{\gzeta}{\bar{\zeta}}
\nc{\hcm}{\ \hat{,}\ } \nc{\hts}{\hat{\otimes}}
\nc{\barot}{{\otimes}} \nc{\free}[1]{\bar{#1}}
\nc{\uni}[1]{\tilde{#1}} \nc{\hcirc}{\hat{\circ}} \nc{\lleft}{[}
\nc{\lright}{]} \nc{\lc}{\lfloor} \nc{\rc}{\rfloor}
\nc{\curlyl}{\left \{ \begin{array}{c} {} \\ {} \end{array}
    \right .  \!\!\!\!\!\!\!}
\nc{\curlyr}{ \!\!\!\!\!\!\!
    \left . \begin{array}{c} {} \\ {} \end{array}
    \right \} }
\nc{\longmid}{\left | \begin{array}{c} {} \\ {} \end{array}
    \right . \!\!\!\!\!\!\!}
\nc{\onetree}{\bullet} \nc{\ora}[1]{\stackrel{#1}{\rar}}
\nc{\ola}[1]{\stackrel{#1}{\la}}
\nc{\ot}{\otimes} \nc{\mot}{{{\boxtimes\,}}}
\nc{\otm}{\overline{\boxtimes}} \nc{\sprod}{\bullet}
\nc{\scs}[1]{\scriptstyle{#1}} \nc{\mrm}[1]{{\rm #1}}
\nc{\margin}[1]{\marginpar{\rm #1}}   
\nc{\dirlim}{\displaystyle{\lim_{\longrightarrow}}\,}
\nc{\invlim}{\displaystyle{\lim_{\longleftarrow}}\,}
\nc{\mvp}{\vspace{0.3cm}} \nc{\tk}{^{(k)}} \nc{\tp}{^\prime}
\nc{\ttp}{^{\prime\prime}} \nc{\svp}{\vspace{2cm}}
\nc{\vp}{\vspace{8cm}} \nc{\proofbegin}{\noindent{\bf Proof: }}
\nc{\proofend}{$\blacksquare$ \vspace{0.3cm}}
\nc{\modg}[1]{\!<\!\!{#1}\!\!>}
\nc{\intg}[1]{F_C(#1)} \nc{\lmodg}{\!
<\!\!} \nc{\rmodg}{\!\!>\!}
\nc{\cpi}{\widehat{\Pi}}
\nc{\sha}{{\mbox{\cyr X}}}  
\nc{\shap}{{\mbox{\cyrs X}}} 
\nc{\shan}{{\overrightarrow \sha}}
\nc{\shpr}{\diamond}    
\nc{\shp}{\ast} \nc{\shplus}{\shpr^+}
\nc{\shprc}{\shpr_c}    
\nc{\msh}{\ast} \nc{\zprod}{m_0} \nc{\oprod}{m_1}
\nc{\vep}{\varepsilon} \nc{\labs}{\mid\!} \nc{\rabs}{\!\mid}
\nc{\sqmon}[1]{\langle #1\rangle}

\nc{\mmbox}[1]{\mbox{\ #1\ }} \nc{\fp}{\mrm{FP}}
\nc{\rchar}{\mrm{char}} \nc{\End}{\mrm{End}} \nc{\Fil}{\mrm{Fil}}
\nc{\Mor}{Mor\xspace} \nc{\gmzvs}{gMZV\xspace}
\nc{\gmzv}{gMZV\xspace} \nc{\mzv}{MZV\xspace}
\nc{\mzvs}{MZVs\xspace} \nc{\Hom}{\mrm{Hom}} \nc{\id}{\mrm{id}}
\nc{\im}{\mrm{im}} \nc{\incl}{\mrm{incl}} \nc{\map}{\mrm{Map}}
\nc{\mchar}{\rm char} \nc{\nz}{\rm NZ} \nc{\supp}{\mathrm Supp}

\nc{\Alg}{\mathbf{Alg}} \nc{\Bax}{\mathbf{Bax}} \nc{\bff}{\mathbf f}
\nc{\bfk}{{\bf k}} \nc{\bfone}{{\bf 1}} \nc{\bfx}{\mathbf x}
\nc{\bfy}{\mathbf y}
\nc{\base}[1]{\bfone^{\otimes ({#1}+1)}} 
\nc{\Cat}{\mathbf{Cat}}

\nc{\detail}{\marginpar{\bf More detail}
    \noindent{\bf Need more detail!}
    \svp}
\nc{\Int}{\mathbf{Int}} \nc{\Mon}{\mathbf{Mon}}
\nc{\rbtm}{{shuffle }} \nc{\rbto}{{Rota-Baxter }}
\nc{\remarks}{\noindent{\bf Remarks: }} \nc{\Rings}{\mathbf{Rings}}
\nc{\Sets}{\mathbf{Sets}} \nc{\wtot}{\widetilde{\odot}}
\nc{\wast}{\widetilde{\ast}} \nc{\bodot}{\bar{\odot}}
\nc{\bast}{\bar{\ast}} \nc{\hodot}[1]{\odot^{#1}}
\nc{\hast}[1]{\ast^{#1}} \nc{\mal}{\mathcal{O}}
\nc{\tet}{\tilde{\ast}} \nc{\teot}{\tilde{\odot}}
\nc{\oex}{\overline{x}} \nc{\oey}{\overline{y}}
\nc{\oez}{\overline{z}} \nc{\oef}{\overline{f}}
\nc{\oea}{\overline{a}} \nc{\oeb}{\overline{b}}
\nc{\weast}[1]{\widetilde{\ast}^{#1}}
\nc{\weodot}[1]{\widetilde{\odot}^{#1}} \nc{\hstar}[1]{\star^{#1}}
\nc{\lae}{\langle} \nc{\rae}{\rangle}
\nc{\lf}{\lfloor}
\nc{\rf}{\rfloor}

\nc{\QQ}{{\mathbb Q}}
\nc{\RR}{{\mathbb R}} \nc{\ZZ}{{\mathbb Z}}
\nc{\CC}{{\mathbb C}}

\nc{\cala}{{\mathcal A}} \nc{\calb}{{\mathcal B}}
\nc{\calc}{{\mathcal C}}
\nc{\cald}{{\mathcal D}} \nc{\cale}{{\mathcal E}}
\nc{\calf}{{\mathcal F}} \nc{\calg}{{\mathcal G}}
\nc{\calh}{{\mathcal H}} \nc{\cali}{{\mathcal I}}
\nc{\call}{{\mathcal L}} \nc{\calm}{{\mathcal M}}
\nc{\caln}{{\mathcal N}}\nc{\calo}{{\mathcal O}}
\nc{\calp}{{\mathcal P}} \nc{\calq}{\mathcal{Q}} \nc{\calr}{{\mathcal R}}
\nc{\cals}{{\mathcal S}} \nc{\calt}{{\mathcal T}}
\nc{\calu}{{\mathcal U}} \nc{\calw}{{\mathcal W}} \nc{\calk}{{\mathcal K}}
\nc{\calx}{{\mathcal X}} \nc{\CA}{\mathcal{A}}

\nc{\fraka}{{\mathfrak a}} \nc{\frakA}{{\mathfrak A}}
\nc{\frakb}{{\mathfrak b}} \nc{\frakB}{{\mathfrak B}}
\nc{\frakD}{{\mathfrak D}} \nc{\frakF}{\mathfrak{F}}
\nc{\frakf}{{\mathfrak f}} \nc{\frakg}{{\mathfrak g}}
\nc{\frakH}{{\mathfrak H}} \nc{\frakL}{{\mathfrak L}}
\nc{\frakM}{{\mathfrak M}} \nc{\bfrakM}{\overline{\frakM}}
\nc{\frakm}{{\mathfrak m}} \nc{\frakP}{{\mathfrak P}}
\nc{\frakN}{{\mathfrak N}} \nc{\frakp}{{\mathfrak p}}
\nc{\frakS}{{\mathfrak S}} \nc{\frakT}{\mathfrak{T}}
\nc{\frakX}{{\mathfrak X}} \nc{\frakx}{\mathfrak{x}}
\nc{\frakc}{{\mathfrak c}}
\nc{\frakd}{{\mathfrak d}}
\nc{\frake}{{\mathfrak e}}
\nc{\BS}{\mathbb{S}}

\font\cyr=wncyr10 \font\cyrs=wncyr7

\nc{\mapmonoid}{\frakM}
\nc{\ncsha}{{\mbox{\cyr X}^{\mathrm NC}}} 
\nc{\ncshao}{{\mbox{\cyr X}^{\mathrm NC}}}
\nc{\ce}{{\mbox{\cyr X}_\frake^{\mathrm NC}}(A)}
\nc{\ced}{{\mbox{\cyr X}_\frake^{\mathrm NC}}(T^+(D))}
\nc{\cet}{{\mbox{\cyr X}_\frake^{\mathrm NC}}(T)}
\nc{\dfgen}{V} \nc{\dfrel}{\Lambda}
\nc{\dfgenb}{\vec{v}} \nc{\dfrelb}{\vec{r}}
\nc{\dfgene}{v} \nc{\dfrele}{r}
\nc{\dfop}{\odot}
\nc{\dfoa}{\dfop^{(1)}} \nc{\dfob}{\dfop^{(2)}}
\nc{\dfoc}{\dfop^{(3)}} \nc{\dfod}{\dfop^{(4)}}
\nc{\erba}{\mathbf{ERBA}}
\nc{\ed}{\mathbf{ED}}
\nc{\etd}{\mathbf{ETD}}
\nc{\NS}{\mathbf{NS}}
\nc{\FN}{F_{\mathrm N}}
\nc{\ob}{\ \begin{picture}(-1,1)(-1,-3)\circle*{3}\end{picture}\ \,}
\nc{\oc}{\circ}
\nc{\obp}{{\ \begin{picture}(-1,1)(-1,-3)\circle*{3}\end{picture}\  }_P}
\nc{\ocp}{\circ_P}
\nc{\pr}{\prec}
\nc{\su}{\succ}
\nc{\prp}{\prec_P}
\nc{\scp}{\succ_P}
\nc{\UN}{U_{N}}
\nc{\denshpr}{\den{\shpr}}
\nc{\den}[1]{\check{#1}}
\nc{\freea}[1]{\tilde{#1}}
\nc{\freev}[1]{\hat{#1}}
\nc{\speciall}{\mathrm{sl(2,\mathbb{C})}}

\nc{\vd}{\vdash}
\nc{\dv}{\dashv}
\nc{\dybe}{DYBE\xspace}
\nc{\da}{diassociative algebra\xspace}
\nc{\dca}{diassociative coalgebra\xspace}
\nc{\das}{diassociative algebras\xspace}
\nc{\dabi}{diassociative bialgebra\xspace}
\nc{\dabis}{diassociative bialgebras\xspace}
\nc{\pdas}{pre-diassociative algebras\xspace}
\nc{\pda}{pre-diassociative algebra\xspace}

\nc{\vda}{\vd_A}
\nc{\vds}{\vd_{A^*}}
\nc{\vdbs}{\vd_{B^*}}
\nc{\vdb}{\vd_B}
\nc{\dva}{\dashv_A}
\nc{\dvs}{\dv_{A^*}}
\nc{\dvbs}{\dv_{B^*}}
\nc{\dvb}{\dv_B}
\nc{\oba}{\ob_A}
\nc{\obas}{\ob_{A^*}}

\definecolor{darkred}{rgb}{0.7,0,0} 

\definecolor{darkgreen}{RGB}{0,180,0}

\begin{document}

\title[A bialgebra theory for diassociative algebras]{Bialgebra theory, the Yang-Baxter equation and relative Rota-Baxter operators for diassociative algebras}
%

\author{Hui Hu}
\address{School of Mathematics and Statistics, Jiangxi Normal University, Nanchang, Jiangxi 330022, China}
\email{huhui@jxnu.edu.cn}

\author{Yizhen Li}
\address{School of Mathematics and Statistics, Jiangxi Normal University, Nanchang, Jiangxi 330022, China}
\email{liyz@jxnu.edu.cn}

\author{Guilai Liu}
\address{Chern Institute of Mathematics \& LPMC, Nankai University, Tianjin 300071, China
	\newline
	Institut de Recherche Mathématique Avancée, UMR7501, Université de Strasbourg, Strasbourg Cedex 67084, France}	 \email{liugl@mail.nankai.edu.cn}

\author{Shanghua Zheng}
\address{School of Mathematics and Statistics, Jiangxi Normal University, Nanchang, Jiangxi 330022, China}
\email{zhengsh@jxnu.edu.cn}

\date{\today}
\begin{abstract}
	In this paper, we develop a bialgebra theory for diassociative algebras.
	Inspired by the notion of a quadratic diassociative algebra, we introduce the concept of a Manin triple of diassociative algebras. We then define a diassociative bialgebra, which is shown to be equivalent to a Manin triple of diassociative algebras through a specific matched pair of diassociative algebras. 
	We further formulate the diassociative Yang-Baxter equation (DYBE) in a diassociative algebra, and prove that symmetric solutions of the DYBE give rise to diassociative bialgebras. 
To construct such solutions, we also introduce relative Rota-Baxter operators and pre-diassociative algebras.
 As a key application, we lift the known relationships between diassociative algebras and other algebraic structures to the bialgebra level. In particular, we show that every diassociative bialgebra naturally induces a Leibniz bialgebra, thereby extending Loday’s classical result that a diassociative algebra gives rise to a Leibniz algebra. Moreover, we provide explicit constructions of Lie bialgebras via tensor products of diassociative bialgebras and  quadratic dendriform  algebras.
 
\end{abstract}
\makeatletter
\@namedef{subjclassname@2020}{\textup{2020} Mathematics Subject Classification}
\makeatother
\subjclass[2020]{
	17B38, 
	17A30, 
	16W99, 
	16T10, 
17B62 
}

\keywords{Diassociative algebra, bialgebra, Manin triple, Yang-Baxter equation, relative Rota-Baxter operator}

\maketitle

\vspace{-1cm}

\tableofcontents

\vspace{-1cm}

\allowdisplaybreaks

\section{Introduction}
The purpose of this paper is threefold: to establish a bialgebra theory for diassociative algebras via the Manin triple approach, to construct diassociative bialgebras using the Yang-Baxter equation and relative Rota-Baxter operators, and to investigate the relationships between diassociative bialgebras and other types of bialgebras, such as Leibniz and Lie bialgebras.

\subsection{A brief review of diassociative algebras}
Diassociative  algebras were first introduced by Loday in his study of the periodicity phenomena in algebraic $K$-theory~\cite{Lo1}.
\begin{defn} A {\bf diassociative algebra} (also called an {\bf associative dialgebra}) is a triple $(A, \dva, \vda)$ consisting of a vector space $A$ and  binary operations  $\dva,\vda: A\ot A\to A$ such that for all $x,y,z\in A$, the following equations hold:
	\begin{align}
		(x \dva y) \dva z&=x \dva (y \dva z)=  x \dva (y \vda z) ,\mlabel{eq:dia1}\\
		(x \vda y) \dva z&=   x \vda (y \dva z),\mlabel{eq:dia2}\\
		(x \vda y) \vda z &=(x \dva y) \vda z=  x \vda (y \vda z).\mlabel{eq:dia3}
	\end{align}
\end{defn}

For a diassociative algebra $(A, \dva)$, if equation $x \dva y = y \vda x$ holds for all $x,y\in A$, then $(A, \dva)$ becomes a perm algebra~\cite{Cha01}. Thus diassociative algebras serve as the noncommutative analogues of perm algebras.
On the other hand,
 a diassociative algebra “doubles” the associative algebra in the sense that it has two associative operations with certain compatible conditions. From the viewpoint of operad theory, the operad of diassociative algebras is the duplicator of the operad of associative algebras \cite{PBGN}, which leads to the classical construction of diassociative algebras  from averaging associative algebras. To be more precise, 
an {\bf averaging associative algebra} is a triple $(A,\cdot,P)$ consisting of an associative algebra $(A,\cdot)$ and a linear operator $P$ such that
	\begin{equation*}
		P(x)\cdot P(y)=P(x\cdot P(y))=P(P(x)\cdot y),\quad x,y\in A.
	\end{equation*}
	Such an operator $P$ is called an {\bf averaging operator} on $(A,\cdot)$.
 Define binary operations on $A$ by
\begin{equation*}
x\dva y:=x\cdot P(y),\quad x \vda y:=P(x)\cdot y,\quad x,y\in A.
\end{equation*}
	Then by \cite[Proposition~6.2]{Ag1}, $(A,\dva,\vda)$ is a diassociative algebra.
See also ~\cite[P16, Examples (b)]{Lo1} for this construction from a differential algebra $(A,\cdot,d)$ with $d^2=0$, since a differential operator $d$ with $d^2=0$ is an averaging operator~\cite{Ag1}.

Diassociative algebras have garnered considerable attention. For example, motivated by the connection between diassociative algebras and conformal algebras, Kolesnikov~\cite{Kol08} provided a general scheme for identifying the diassociative algebra variety corresponding to a given variety of ordinary algebras, such as associative, Lie, or alternative algebras. 
In subsequent work, Saha~\cite{Saha19} developed an equivariant cohomology theory for diassociative algebras and applied it to study their formal deformations. More recently, Das and Sen~\cite{DS23} introduced diassociative family algebras and showed that averaging family operators induce such structures. The latest contribution by Restrepo-S\'anchez  et al.~\cite{RRS25} equips \( F[x] \otimes F[x] \) with a diassociative algebra structure and offers a complete classification of its derivations and diderivations.  For additional work on classifications and derivations of diassociative algebras, see \cite{Gon13,RRB14}, and a comprehensive survey of diassociative algebras can be found in \cite{Bre12}.

Diassociative algebras are closely related to several other types of algebras, including Leibniz algebras and dendriform algebras.  
The concept of a Leibniz algebra was first introduced by Blokh, who referred to it as a \(D\)-algebra~\cite{Blo65}. As a noncommutative analogue of Lie algebras, the Leibniz algebra was later rediscovered and popularized (see \cite{Lo93,LP93}) by Loday, who coined the term (right) Leibniz algebra.  

Recall from \cite{Lo1} that a {\bf (left) Leibniz algebra} is a pair \((A, \circ_A)\), where \(A\) is a vector space and \(\circ_A: A \otimes A \to A\) is a bilinear operation satisfying the {\bf (left) Leibniz identity}
$$
x \circ_A (y \circ_A z) = (x \circ_A y) \circ_A z + y \circ_A (x \circ_A z), \quad x, y, z \in A.
$$

Every diassociative algebra gives rise to a Leibniz algebra in the following way, generalizing the canonical correspondence between associative algebras and Lie algebras.

\begin{exam} \cite{Lo1,RRS25}\mlabel{exam:dia-Lei}
	Let $(A, \vda, \dva)$ be a diassociative algebra. Then  the vector space $A$ admits a natural Leibniz algebra structure $(A, \circ_{A})$ with the multiplication $\circ_{A}$ given by
		\begin{equation}\mlabel{eq:dil}
			x\circ_{A} y:=x\vda y-y\dva x,\quad x,y\in A.
		\end{equation}
\end{exam}

Recall from \cite{Ag1,BGLZ251} that an {\bf averaging Lie algebra} is a triple $(\mathfrak{g}, [-,-]_\mathfrak{g}, P)$, where $(\mathfrak{g},[-,-]_\mathfrak{g})$ is a Lie algebra and $P \colon \mathfrak{g} \to \mathfrak{g}$ is a linear map satisfying the identity
$$
[P(x),P(y)]_\mathfrak{g} = P\bigl([P(x),y]_\mathfrak{g}\bigr), \quad x,y \in \mathfrak{g}.
$$
Paralleling the construction of diassociative algebras via averaging operators on associative algebras,
an averaging Lie algebra $(\mathfrak{g}, [-,-]_\frakg, P)$  induces a Leibniz algebra structure $(\mathfrak{g},\circ_{\frak g})$, with the binary operation $\circ_\mathfrak{g}$ defined by
$$
x \circ_{\frak g} y \;=\; [P(x),y]_\mathfrak{g}, \quad  x,y \in \mathfrak{g}.
$$

To encapsulate the structural correspondences established thus far, we present the following commutative diagram:
$$\small{
	\xymatrix{
		\text{Associative algebras}
		\ar@{->}_{\text{averaging operator}}[d]\ar@{->}^{\quad[x,y]_{A}:=x\cdot_{A} y-y\cdot_{A} x}[r] 
		&\text{Lie algebras}\ar@{->}^{\text{averaging operator}}[d]\\
		\text{Diassociative algebras\quad}\ar@{->}^{x\circ_{A} y:=x\vda y- y\dva x}[r]&\text{\quad Leibniz algebras}
}}
$$

	On the other hand, diassociative algebras are also closely related to dendriform algebras.	
The concept of dendriform algebras was introduced by Loday in the context of algebraic $K$-theory. They arise in numerous areas of mathematics and physics, including combinatorics~\cite{LR2}, operads~\cite{Lo1}, homology theory~\cite{Fra97,Fra98},  pre-Lie algebras~\cite{Bai21,Cha01}, Hopf algebras~\cite{Cha02,Ro1}, Rota-Baxter algebras~\cite{EG1},   Lie and Leibniz algebras~\cite{Fra98}, and quantum field theory~\cite{Foi02}.  A key feature of dendriform algebras is that the sum of their two binary operations for a dendriform algebra $(A, \succ_{A}, \prec_{A})$ yields an associative algebra $(A, \cdot_{A})$. This phenomenon can be understood as a form of ``splitting associativity".
The operad of dendriform algebras is Koszul dual to the operad of diassociative algebras. Consequently, one obtains a natural construction of Lie algebras from tensor products of dendriform and diassociative algebras.

\subsection{Diassociative bialgebras, the diassociative Yang-Baxter equation and relative Rota-Baxter operators}

A bialgebra structure consists of a vector space equipped with both an algebra and a coalgebra structure, which satisfy a set of compatibility conditions. 
Some known examples of such structures
include Lie bialgebras (\cite{CP94,Dri})  and antisymmetric
infinitesimal bialgebras (\cite{Agu2000, Agu2001,
	Agu2004,Bai}).
These bialgebras have a common
property that they have equivalent characterizations in terms of the Manin triples which correspond to the nondegenerate (symmetric or antisymmetric) invariant bilinear forms.

In \cite{Cha05}, Chapoton proved that the operads of pre-Lie algebras, Zinbiel algebras, dendriform algebras, Leibniz algebras, perm algebras and diassociative algebras are anticyclic. He also applied operad theory to properly determine the nondegenerate antisymmetric invariant bilinear forms on these algebras. 
To date, bialgebra theories have been established  for 
pre-Lie algebras (see \cite{Bai1,Wang2024}), Zinbiel algebras (see \cite{Wang2026}), dendriform algebras (see \cite{Bai,Wang25}), Leibniz algebras (see \cite{TS22,BLST25}) and perm algebras (see \cite{Hou24, Lin}).
However, a systematic bialgebra theory for diassociative algebras has not yet been developed.

In this paper, we introduce the notion of a diassociative bialgebra. We show that a diassociative bialgebra is equivalent to a Manin triple of diassociative algebras through specific matched pairs of diassociative algebras.
Then we introduce the diassociative Yang-Baxter equation (DYBE) in a diassociative algebra, and show that symmetric solutions of the DYBE give rise to diassociative bialgebras. 
We also introduce the notions of relative Rota-Baxter operators and pre-diassociative algebras to construct solutions of the DYBE.

It is natural to lift the aforementioned algebraic correspondences between diassociative algebras and other varieties to the bialgebra level.
We show that a diassociative bialgebra naturally gives rise to a Leibniz bialgebra.
We also show that the tensor product of a diassociative bialgebra and a quadratic dendriform algebra yields a Lie bialgebra. 
There is also a construction of  diassociative bialgebras from tensor products of quadratic perm algebras and ASI bialgebras.

\subsection{Outline of the paper} The organization of this paper is as follows.
	
In Section~\mref{sec:2}, we  recall some examples of diassociative algebras, and give the notion of a diassociative coalgebra. We then develop the representation theory of a \da via its semi-direct product structure. We prove that every representation of a \da has a corresponding dual representation (Proposition~\mref{prop:dual-rep}). Consequently, the adjoint representation of a \da also admits a dual representation.

In Section~\mref{sec:3}, we first define matched pairs and Manin triples of \das. We then establish that a distinguished class of matched pairs is equivalent to a Manin triple of \das (Theorem~\mref{thm:Manin}). Building on this equivalence, we further introduce the notion of a \dabi, and subsequently demonstrate the mutual equivalence among matched pairs, Manin triples of \das and \dabis (Corollary~\mref{coro:equiv}).

In Section~\mref{sec:4}, we give the notion of the diassociative Yang-Baxter Equation (DYBE) in a \da. We first demonstrate that a symmetric solution of the DYBE yields  a diassociative bialgebra (Proposition~\mref{prop:triangular}). 
 Next, we propose an $\mathcal{O}$-operator for a \da as an operator form of the DYBE, and prove that each such operator induces a symmetric solution of the DYBE in a suitable semi-direct product diassociative algebra (Theorem~\mref{thm:semi-dybe}).
 Finally, we introduce pre-diassociative algebras to systematically construct the DYBE solutions (Theorem~\mref{thm:pre-dia}).

In Section~\mref{sec:5}, we first show that diassociative bialgebras induce Leibniz bialgebras through the Manin triple approach (Theorem~\mref{thm:dabi-leibi}). This result lifts Loday's celebrated correspondence from diassociative algebras to Leibniz algebras to the bialgebra level. We then prove that a Lie bialgebra can be derived from the tensor product of a quadratic dendriform algebra and a diassociative bialgebra (Proposition~\mref{prop:dend-dia-Lie}). Finally, we construct diassociative bialgebras from tensor products of quadratic perm algebras and ASI bialgebras  (Proposition~\mref{prop:ass-perm-dia}). These results highlight the pivotal role of diassociative bialgebras in connecting various bialgebraic structures.	
	
{\bf Notations: }
\begin{enumerate}
\item
In this paper, the ground field is denoted by $\bfk$, which is assumed to be of characteristic 0. All vector spaces, tensor products, and linear homomorphisms are defined over $\bfk$. Unless specified otherwise, all vector spaces and algebras considered are finite-dimensional. By an algebra, we mean an associative algebra, not necessarily unital. For a vector space $V$, we denote its dual space by $V^*$ .
\item
Let $V$ be a vector space equipped with a binary operation $\diamond_{V}$. For all $u, v \in V$, we define the {\bf left multiplication operator} $L_{\diamond_{V}}(u)$ (or simply $L(u)$) by  
$$
L_{\diamond_{V}}(u)(v) = u \diamond_{V} v,
$$
and the {\bf right multiplication operator} $R_{\diamond_{V}}(v)$ (or simply $R(v)$) by  
$$
R_{\diamond_{V}}(v)(u) = u \diamond_{V} v.
$$
The {\bf flip map} $\sigma:V\ot V\to V\ot V$ is the linear map given by $ \sigma(u\ot v)=v\ot u$.
\item For any vector space $V$ and its dual space $V^*$, the natural pairing is denoted by  
 $$
   \langle \cdot, \cdot \rangle : V^* \times V \to \bfk, \quad \langle f, v \rangle = f(v), \quad f \in V^*, v \in V.
$$
   Given a linear map \(\varphi: V \to W\), its transpose \(\varphi^*: W^* \to V^*\) is defined by  
$$
   \langle \varphi^*(w^*), v \rangle = \langle w^*, \varphi(v) \rangle, \quad  w^* \in W^*, v \in V.
$$

\item Let $A$ and $V$ be vector spaces, and let $\rho: A \to \mathrm{End}(V)$ be a linear map. The dual representation $\rho^*: A \to \mathrm{End}(V^*)$ is defined by  
\begin{equation}\mlabel{eq:rho-star}
   \langle \rho^*(x)(u^*), v \rangle  
   =\langle \big( \rho(x) \big)^{*}u^{*},v\rangle=
    \langle u^*, \rho(x)(v) \rangle, \quad  x \in A, u^* \in V^*, v \in V.
\end{equation}
\end{enumerate}
\vspace{-.4cm}

\section{Diassociative algebras and their representations}
\mlabel{sec:2}
In this section, we first recall several examples of diassociative algebras  together with the definition of a quadratic \da. We then present their representation theory, and show that the dual space of a representation of a \da also carries a natural representation structure.

\subsection{Diassociative algebras}

\begin{exam}\cite{Ongay07}
Let $V$ be a vector space and let $\phi\in V^*$  be fixed. Define
 $$u\vd v:=\phi(u)v,\quad u\dv v:=u\phi(v),\quad u,v\in V.$$
Since   $\phi(\phi(u)v)=\phi(u)\phi(v)=\phi(u\phi(v))$, $(V,\vd,\dv)$ is a diassociative algebra.
\end{exam}
\begin{exam}\cite{LZ10} Let $\bfk[x,y]$ be the polynomial algebra in two variables $x$ and $y$ over  $\bfk$. 
Define 
$$f(x,y)\dv g(x,y):=f(x,y)g(y,y),\; f(x,y)\vd g(x,y):=f(x,x)g(x,y),\; f(x,y), g(x,y)\in\bfk[x,y].$$
Then $(\bfk[x,y],\dv,\vd)$ forms a diassociative algebra.
\end{exam}
\begin{exam} 
Let $V$ be a two-dimensional vector space with basis $\{e_1, e_2\}$.
Define two multiplications $\dv$ and $\vd$ on $V$ by the following Cayley tables.
$$
		\begin{minipage}[t]{0.3\textwidth}
			\centering
			\begin{tabular}{c|cc}
				$\dv$	& $e_1$ &$ e_2$\\
				\hline 	
				$e_1$ & $e_1$ & $e_1$   \\
				$e_2$ & $e_2$ & $e_2$  \\
			\end{tabular}
		\end{minipage}
		\hspace{0.02cm}
		\begin{minipage}[t]{0.3\textwidth}
			\centering
				\begin{tabular}{c|cc}
					$\vd$	& $e_1$ & $e_2$  \\
					\hline 	
					$e_1$ & $e_1$ & $ e_2$ \\
					$e_2$ & $e_1$ & $e_2$ \\
			\end{tabular}
		\end{minipage}
$$
A straightforward verification shows that $(V,\dv,\vd)$ is a \da.
\end{exam}

\begin{defn}\cite{Cha05} \mlabel{defn:qua-dia}
	A {\bf quadratic  diassociative algebra}  is a diassociative algebra $(A,\dva,\vda)$  equipped with a
	nondegenerate antisymmetric bilinear form $\omega$  that is {\bf invariant} in the following sense:
	\begin{equation}\mlabel{eq:omega}
		\omega(x,y\dva z)=\omega(z, x\oba y),\;
		\;\omega(x\vda y,z)=\omega(x,y\oba z),\quad x,y,z\in A,
	\end{equation}
	where $x\oba y:=x\vd_A y-x\dv_A y$.
\end{defn}

\begin{remark}
Note that ~\meqref{eq:omega} is equivalent to the following two equations:
	\begin{equation}\mlabel{eq:omega1}
		\omega(x\dva y, z)=-\omega(y\vda z,x),	
\end{equation}
\begin{equation}\mlabel{eq:omega2}
	\omega(x\vda y,z)=\omega(y\dva z,x)-\omega(y\vda z,x),\quad x,y,z\in A.
	\end{equation}
Moreover, under ~\meqref{eq:omega1}, ~\meqref{eq:omega2} is equivalent to
\begin{equation*}
	\omega(x\vda y,z)=-\omega(z\vda x,y)-\omega(y\vda z,x),\quad x,y,z\in A.
\end{equation*}
Thus $\omega$ is a Connes cocycle \cite{Bai} on the associative algebra $(A,\vda)$, and likewise on the associative algebra $(A, \dva)$. Therefore, any quadratic diassociative algebra induces a Connes cocycle.
\end{remark}
We now present the dual notion of diassociative algebras.
\begin{defn}\mlabel{defn:dia-coalg}
	A {\bf diassociative  coalgebra} is a triple $(A,\Delta_{\dva},\Delta_{\vda})$ consisting of a vector space $A$ and co-multiplications $\Delta_{\dva},\Delta_{\vda}:A\rightarrow A\otimes A$ such that for all $x\in A$,
	\begin{align}
		&(\id\otimes\Delta_{\dva})\Delta_{\dva}(x)=(\id\otimes\Delta_{\vda})\Delta_{\dva}(x)=(\Delta_{\dva}\otimes\id )\Delta_{\dva}(x),\mlabel{eq:co1}\\
		&(\Delta_{\vda}\otimes\id )\Delta_{\dva}(x)=(\id\otimes\Delta_{\dva})\Delta_{\vda}(x),\mlabel{eq:co2}\\
		&(\Delta_{\dva}\otimes\id )\Delta_{\vda}(x)=(\Delta_{\vda}\otimes\id )\Delta_{\vda}(x)=(\id\otimes\Delta_{\vda})\Delta_{\vda}(x).\mlabel{eq:co3}
	\end{align}
\end{defn}
\begin{exam}
 Let $A$ be a two-dimensional vector space with basis $\{e_1, e_2\}$. Define two coproducts $\Delta_{\dva}$ and $\Delta_{\vda}$ by
\begin{align*}\Delta_{\dva}(e_1)=e_1\ot e_1,\qquad &\Delta_{\dva}(e_2)=e_2\ot e_1.\\
\Delta_{\vda}(e_1)=e_1\ot e_1, \qquad&\Delta_{\vda}(e_2)=0.
\end{align*}
Thus, a straightforward calculation shows that $(A, \Delta_{\dva}, \Delta_{\vda})$ is a \dca.
\end{exam}

The following result establishes the precise duality between diassociative algebras and coalgebras.
\begin{prop}\mlabel{prop:dia-coalg}
Let $A$ be a vector space and let $\Delta_{\dva}$ and $\Delta_{\vda}$ be co-multiplications on $A$. Let $\dashv_{A^*}$ and $\vds$ be the linear duals of $\Delta_{\dva}$ and $\Delta_{\vda}$, respectively. Then
$(A, \Delta_{\dva}, \Delta_{\vda})$ is a  diassociative  coalgebra if and only if $(A^*,\dashv_{A^*}, \vds)$ is a diassociative algebra.  
\end{prop}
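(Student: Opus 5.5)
The plan is to dualize each of the five identities \eqref{eq:dia1}--\eqref{eq:dia3} term by term and match it with the corresponding comultiplication identity in \eqref{eq:co1}--\eqref{eq:co3}. Since $A$ is finite-dimensional, $(A\ot A\ot A)^*\cong A^*\ot A^*\ot A^*$ via the natural pairing
\begin{equation*}
\langle a^*\ot b^*\ot c^*, x\ot y\ot z\rangle=\langle a^*,x\rangle\langle b^*,y\rangle\langle c^*,z\rangle .
\end{equation*}
Because this pairing is nondegenerate, two elements of $A\ot A\ot A$ coincide exactly when they pair equally with every $a^*\ot b^*\ot c^*$. The operations on $A^*$ are defined by
\begin{equation*}
\langle a^*\dvs b^*, x\rangle=\langle a^*\ot b^*,\Delta_{\dva}(x)\rangle,\qquad \langle a^*\vds b^*, x\rangle=\langle a^*\ot b^*,\Delta_{\vda}(x)\rangle .
\end{equation*}

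The key step is a general dictionary: for any two comultiplications $\Delta_1,\Delta_2$ on $A$ with dual products $\star_1,\star_2$ on $A^*$, one has
\begin{align*}
\langle (a^*\star_2 b^*)\star_1 c^*, x\rangle&=\langle a^*\ot b^*\ot c^*,(\Delta_2\ot\id)\Delta_1(x)\rangle,\\
\langle a^*\star_1 (b^*\star_2 c^*), x\rangle&=\langle a^*\ot b^*\ot c^*,(\id\ot\Delta_2)\Delta_1(x)\rangle .
\end{align*}
To check this, I will write $\Delta_1(x)=\sum x_{(1)}\ot x_{(2)}$. Then $\langle (a^*\star_2 b^*)\star_1 c^*,x\rangle=\sum\langle a^*\star_2 b^*,x_{(1)}\rangle\langle c^*,x_{(2)}\rangle$. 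Applying the definition of $\star_2$ once more gives the first formula, and the second follows in the same way.

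With this dictionary in hand, I translate each identity. The equality $(a^*\dvs b^*)\dvs c^*=a^*\dvs(b^*\dvs c^*)=a^*\dvs(b^*\vds c^*)$ becomes
\begin{equation*}
(\Delta_{\dva}\ot\id)\Delta_{\dva}=(\id\ot\Delta_{\dva})\Delta_{\dva}=(\id\ot\Delta_{\vda})\Delta_{\dva},
\end{equation*}
which is exactly \eqref{eq:co1}. Likewise \eqref{eq:dia2} for $A^*$ becomes $(\Delta_{\vda}\ot\id)\Delta_{\dva}=(\id\ot\Delta_{\dva})\Delta_{\vda}$, which is \eqref{eq:co2}. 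Finally \eqref{eq:dia3} for $A^*$ becomes the three-way equality \eqref{eq:co3}. Nondegeneracy of the pairing makes each translation an equivalence, so both directions of the proposition follow at once.

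I do not expect a real obstacle here. The only point needing care is the bookkeeping in the dictionary, namely which factor the inner comultiplication acts on and which of $\Delta_{\dva},\Delta_{\vda}$ sits inside. In particular, for \eqref{eq:dia2} the outer operation on the left-hand side is $\dvs$, while on the right-hand side it is $\vds$; this determines which $\Delta$ appears outermost in each composite. I will verify each of the five equalities carefully against \eqref{eq:co1}--\eqref{eq:co3} to make sure that no identity is matched to a transposed version.
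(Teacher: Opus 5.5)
Your proposal is correct: the dualization dictionary is right, and each of the five composites matches \eqref{eq:co1}--\eqref{eq:co3} exactly, including which of $\Delta_{\dva}$, $\Delta_{\vda}$ sits inside and outside in \eqref{eq:co2}. The paper states this proposition without proof and relies on exactly this term-by-term dualization, so your argument is the intended one.
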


\begin{exam}\mlabel{defn:aver-coalg}
An {\bf averaging coalgebra} is a triple $(C,\Delta, Q)$ consisting of a coassociative coalgebra $(C,\Delta)$ and a linear operator $Q$ such that
	\begin{equation*}
	(Q\ot Q)\Delta(x)=(\id\ot Q)\Delta( Q(x))= (Q\ot \id)\Delta (Q(x)) ,\quad x\in C.
	\end{equation*}
An  averaging coalgebra  $(C,\Delta, Q)$ induces a diassociative coalgebra $(C, \Delta_{\dv_C},\Delta_{\vd_C})$, where $\Delta_{\dv_C}$ and $\Delta_{\vd_C}$ are given by
\begin{equation*}
\Delta_{\dv_C}(x):=(\id\ot Q)\Delta(x),\quad \Delta_{\vd_C}(x):=(Q\ot\id)\Delta(x),\quad x\in C.
\end{equation*}
\end{exam}
\subsection{Representations of diassociative algebras}
\begin{defn}\mlabel{def:semi}
	Let $(A,\dva,\vda)$ be a \da and let $V$ be a vector space. Let $\ell_{\dva},r_{\dva},\ell_{\vda},r_{\vda}: A\to \End(V)$ be linear maps.
	Define two multiplications $\dv',\vd'$ on $A\oplus V$ by
	\begin{align}
		&(a+u)\dv' (b+v):=a\dva b+(\ell_{\dva}(a)v+r_{\dva}(b)u ),\\
		&(a+u)\vd' (b+v):=a\vda b+(\ell_{\vda}(a)v+r_{\vda}(b)u ),\quad  a, b\in A, u, v \in V.
	\end{align}
	If ($A\oplus V$,$\dashv'$,$\vd'$) is a
	\da, then we call $(V,\ell_{\dva},r_{\dva},\ell_{\vda},r_{\vda})$ a  {\bf representation} of $(A,\dva,\vda)$. In this case,   ($A\oplus V$,$\dv'$,$\vd'$) is called the {\bf semi-direct product diassociative algebra} of $(A,\dva, \vda)$, denoted by $ A\ltimes^{\ell_{\dva},r_{\dva}}_{\ell_{\vda},r_{\vda}} V $.
\end{defn}

\begin{prop}\mlabel{pro:repr}
	Let $(A,\dva,\vda)$ be a \da and let $V$ be a vector space. Suppose that  $\ell_{\dva},r_{\dva},\ell_{\vda},r_{\vda}: A\to \End(V)$ are linear maps. Then $(V,\ell_{\dva},r_{\dva},\ell_{\vda},r_{\vda})$ is a representation of $(A,\dva,\vda)$ if and only if the following equations hold.
	\begin{align}
		&\ell_{\dva}(x)\ell_{\dva}(y)=\ell_{\dva}(x)\ell_{\vda}(y)=\ell_{\dva}(x\dva y),\mlabel{eq:repr1}\\
		&\ell_{\dva}(x)r_{\dva}(y)=\ell_{\dva}(x)r_{\vda}(y)=r_{\dva}(y)\ell_{\dva}(x),\mlabel{eq:repr2}\\
		&r_{\dva}(x\dva y)=r_{\dva}(x\vda y)=r_{\dva}(y)r_{\dva}(x),\mlabel{eq:repr3}\\
		&\ell_{\dva}(x\vda y)=\ell_{\vda}(x)\ell_{\dva}(y),\mlabel{eq:repr4}\\
		&r_{\dva}(x)\ell_{\vda} (y)=\ell_{\vda}(y)r_{\dva}(x), \mlabel{eq:repr5}\\
		&r_{\dva}(x)r_{\vda} (y)=r_{\vda}(y\dva x), \mlabel{eq:repr6}\\
		&\ell_{\vda}(x\dva y)=\ell_{\vda}(x\vda y)=\ell_{\vda}(x)\ell_{\vda}(y),\mlabel{eq:repr7}\\
		&r_{\vda}(x)\ell_{\dva} (y)=	r_{\vda}(x)\ell_{\vda} (y)=\ell_{\vda}(y)r_{\vda}(x), \mlabel{eq:repr8}\\
		&r_{\vda}(x)r_{\dva} (y)=	r_{\vda}(x)r_{\vda} (y)=r_{\vda}(y\vda x), 	\quad x, y\in A. \mlabel{eq:repr9}
	\end{align}
\end{prop}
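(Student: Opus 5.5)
We verify the axioms \meqref{eq:dia1}--\meqref{eq:dia3} for $(A\oplus V,\dv',\vd')$ directly. Every axiom is trilinear in its three arguments, and the bracket of two elements of $V$ is zero. Hence it suffices to check each identity on triples $(a+u,b+v,c+w)$ and to sort terms by how many entries come from $V$. Terms with all three entries in $A$ reduce to the axioms of $(A,\dva,\vda)$, which hold by hypothesis. Terms with two or more entries in $V$ vanish identically. So $A\oplus V$ is a \da if and only if the identities hold whenever exactly one argument lies in $V$ and the other two lie in $A$.

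We take the positions one at a time.
\begin{enumerate}
\item[(1)] \emph{$V$-entry in the third slot}, i.e.\ triples $(x,y,w)$. Equation \meqref{eq:dia1} gives
\[
\ell_{\dva}(x\dva y)w=\ell_{\dva}(x)\ell_{\dva}(y)w=\ell_{\dva}(x)\ell_{\vda}(y)w,
\]
which is \meqref{eq:repr1}. Equation \meqref{eq:dia2} gives \meqref{eq:repr4}, and \meqref{eq:dia3} gives \meqref{eq:repr7}.
\item[(2)] \emph{$V$-entry in the middle slot}, i.e.\ triples $(x,v,z)$. Equation \meqref{eq:dia1} gives
\[
r_{\dva}(z)\ell_{\dva}(x)v=\ell_{\dva}(x)r_{\dva}(z)v=\ell_{\dva}(x)r_{\vda}(z)v,
\]
which is \meqref{eq:repr2}. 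Likewise \meqref{eq:dia2} gives \meqref{eq:repr5} and \meqref{eq:dia3} gives \meqref{eq:repr8}.
\item[(3)] \emph{$V$-entry in the first slot}, i.e.\ triples $(u,y,z)$. Equation \meqref{eq:dia1} gives
\[
r_{\dva}(z)r_{\dva}(y)u=r_{\dva}(y\dva z)u=r_{\dva}(y\vda z)u,
\]
which is \meqref{eq:repr3} after renaming the variables. Equation \meqref{eq:dia2} gives \meqref{eq:repr6}, and \meqref{eq:dia3} gives \meqref{eq:repr9}.
\end{enumerate}

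This produces exactly the nine displayed families. Since $u,v,w$ are arbitrary and the mixed components of the axioms are precisely these operator identities, the conditions are necessary and sufficient, which proves both directions.

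No step is conceptually hard. The only real risk is bookkeeping: the order of composition of the $r$ operators must be tracked correctly, for example $(u\dv' y)\dv' z=r_{\dva}(z)r_{\dva}(y)u$. Each extracted identity must also be matched to the stated equation after a possible renaming of $x,y$. I would write out the nine components explicitly in a table to make the matching transparent.
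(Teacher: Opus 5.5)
Your proposal is correct and is the same direct verification the paper has in mind; the paper's proof consists only of the remark that the result ``follows from a direct computation.'' Your reduction to triples with exactly one entry in $V$, and your matching of each of the nine mixed components to the corresponding identity, both check out, including the order of composition of the $r$-operators and the renamings of variables.
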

\begin{proof}
This follows from a direct computation.
\end{proof}

 \begin{prop}\mlabel{prop:dual-rep}
 Let $(A,\dva,\vda)$ be a \da. If $(V,\ell_{\dva},r_{\dva},\ell_{\vda},r_{\vda})$  is a representation of $(A,\dva,\vda)$, then $(V^*,-r^*_{\oba},\ell^*_{\vda},r^*_{\dva},\ell^*_{\oba})$ is also a representation of $A$, where $r^*_{\oba}:=r^*_{\vda}-r^*_{\dva}$ and $\ell^*_{\oba}:=\ell^*_{\vda}-\ell^*_{\dva}$.
 \end{prop}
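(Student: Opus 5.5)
We use the characterization of representations in Proposition~\mref{pro:repr}. Set the candidate structure maps on $V^*$ to be
$$\ell'_{\dva}:=-r^*_{\oba}=r^*_{\dva}-r^*_{\vda},\quad r'_{\dva}:=\ell^*_{\vda},\quad \ell'_{\vda}:=r^*_{\dva},\quad r'_{\vda}:=\ell^*_{\oba}=\ell^*_{\vda}-\ell^*_{\dva}.$$
We then check that these four maps satisfy \meqref{eq:repr1}--\meqref{eq:repr9}, with $(\ell,r)$ replaced by $(\ell',r')$. Transposition reverses composition: $\varphi^*\psi^*=(\psi\varphi)^*$, and it is injective on $\End(V)$. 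So each identity among the primed operators becomes, after pairing with $v\in V$ and using \meqref{eq:rho-star}, an identity among the original operators with the order of composition reversed. For instance, \meqref{eq:repr4} for the primed data reads $\ell'_{\dva}(x\vda y)=\ell'_{\vda}(x)\ell'_{\dva}(y)$. This is
$$r^*_{\dva}(x\vda y)-r^*_{\vda}(x\vda y)=r^*_{\dva}(x)\bigl(r^*_{\dva}(y)-r^*_{\vda}(y)\bigr),$$
or equivalently, after dualizing,
$$r_{\dva}(x\vda y)-r_{\vda}(x\vda y)=r_{\dva}(y)r_{\dva}(x)-r_{\vda}(y)r_{\dva}(x).$$
This follows from \meqref{eq:repr3} and from the equality $r_{\vda}(y)r_{\dva}(x)=r_{\vda}(x\vda y)$ in \meqref{eq:repr9}.

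The plan is to go through the nine groups of equations one at a time, about fifteen scalar identities in all. For each one we dualize, expand the differences $\oba=\vda-\dva$, and match the terms against the relations in Proposition~\mref{pro:repr} for the original representation. Most of them use one or two of the original relations directly. Examples: \meqref{eq:repr7} for the primed maps needs $r_{\dva}(x\dva y)=r_{\dva}(x\vda y)=r_{\dva}(y)r_{\dva}(x)$, which is exactly \meqref{eq:repr3}; \meqref{eq:repr3} for the primed maps needs $\ell_{\vda}(x\dva y)=\ell_{\vda}(x\vda y)=\ell_{\vda}(x)\ell_{\vda}(y)$, which is \meqref{eq:repr7}. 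The mixed ones, such as \meqref{eq:repr2}, \meqref{eq:repr5}, \meqref{eq:repr6} and \meqref{eq:repr8} in primed form, involve commutation relations between left and right actions. In those we expect cancellations of the form $\ell_{\dva}(x)r_{\dva}(y)=\ell_{\dva}(x)r_{\vda}(y)$, coming from \meqref{eq:repr2}, to kill the difference terms.

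As a more conceptual alternative, or as a cross-check, we could build the semi-direct product $A\ltimes V^*$ and test whether it is diassociative, by pairing with the semi-direct product $A\ltimes V$. The transpose computation above is more direct, though, so we will use it.

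We expect the main obstacle to be the identities in which both primed operators are differences: $\ell'_{\dva}\,r'_{\vda}$-type terms, as in primed \meqref{eq:repr2} and \meqref{eq:repr8}. After expansion these give four terms on each side. For example, primed \meqref{eq:repr8} includes $r'_{\vda}(x)\ell'_{\dva}(y)=\ell'_{\vda}(y)r'_{\vda}(x)$. Dualized, this reads
$$(r_{\dva}(y)-r_{\vda}(y))(\ell_{\vda}(x)-\ell_{\dva}(x))=(\ell_{\vda}(x)-\ell_{\dva}(x))r_{\dva}(y).$$
Here we must use \meqref{eq:repr2}, \meqref{eq:repr5} and \meqref{eq:repr8} together, including $r_{\vda}(y)\ell_{\dva}(x)=r_{\vda}(y)\ell_{\vda}(x)$, so that the $r_{\vda}$ terms cancel pairwise. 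We will organize these by first collecting all terms containing $r_{\vda}$ or $\ell_{\dva}$ and applying the relevant equalities.
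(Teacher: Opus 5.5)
Your proposal is correct and follows essentially the same route as the paper. Both reduce, via Proposition~\ref{pro:repr}, to the primed versions of \eqref{eq:repr1}--\eqref{eq:repr9}, dualize each one (which reverses composition), and match the resulting terms against the original relations; the paper writes out only primed \eqref{eq:repr1}, and your checks of primed \eqref{eq:repr3}, \eqref{eq:repr4}, \eqref{eq:repr7} and \eqref{eq:repr8} are all accurate. One small correction: primed \eqref{eq:repr6} involves only left actions, so it does not rest on a left--right commutation relation; it follows from \eqref{eq:repr7} together with $\ell_{\dva}(y\dva x)=\ell_{\dva}(y)\ell_{\vda}(x)$ from \eqref{eq:repr1}.
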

 \begin{proof}
 Suppose that $(V,\ell_{\dva},r_{\dva},\ell_{\vda},r_{\vda})$  is a representation of $(A,\dva,\vda)$. By Proposition~\mref{pro:repr},  we shall verify that $(V^*,-r^*_{\oba},\ell^*_{\vda},r^*_{\dva},\ell^*_{\oba})$ satisfies  \meqref{eq:repr1}-\meqref{eq:repr9}. We only prove ~\meqref{eq:repr1}, since the remaining cases follow from  an analogous proof. For all $x,y\in A$, $u\in V$ and $a^*\in V^*$, we have
 	\begin{align*}
 	\langle r^*_{\oba}(x)r^*_{\oba}(y)a^*,u \rangle
&=\langle a^*,r_{\oba}(y)r_{\oba}(x)u\rangle\\
&=\langle a^*,(r_{\dva}(y)r_{\dva}(x)-r_{\vda}(y)r_{\dva}(x)-r_{\dva}(y)r_{\vda}(x)
+r_{\vda}(y)r_{\vda}(x))u\rangle\\
&\stackrel{\eqref{eq:repr9}}{=}\langle a^*,(r_{\dva}(y)r_{\dva}(x)-r_{\dva}(y)r_{\vda}(x))u\rangle	
=\langle -r^*_{\oba}(x)r^*_{\dva}(y)a^*,u\rangle\\
&\stackrel{\eqref{eq:repr3}\eqref{eq:repr6}}{=}\langle a^*,
(r_{\dva}(x{\dva} y)-r_{\vda}(x{\dva} y))u\rangle\\
&=\langle(-r^*_{\oba})(x{\dva} y)a^*,u \rangle.
\end{align*}
Thus, $(-r^*_{\oba})(x)(-r^*_{\oba})(y)=(-r^*_{\oba})(x)r^*_{\dva}(y)=(-r^*_{\oba})(x{\dva} y)$. 
\end{proof}
Building on the conclusions presented above, the following results can be derived.
\begin{exam}\mlabel{ex:dual}
	Let $(A,{\dva},\vda)$ be a \da. Then
	\begin{enumerate}
		\item $(A, L_{\dva},R_{\dva},L_{\vda},R_{\vda})$ is a representation of $(A,{\dva},\vda)$, called the {\bf adjoint representation} of $(A,\dva,\vda)$.
		\item $(A^*, -R^*_{\oba},L^*_{\vda},R^*_{\dva},L^*_{\oba})$ is also a representation of $(A,\dva,\vda)$, called the {\bf dual representation} of $(A, L_{\dva},R_{\dva},L_{\vda},R_{\vda})$.\mlabel{it:dual2}
	\end{enumerate}
\end{exam}

\begin{defn}
	Let $(A,\dva,\vda)$ be a \da. Let $(V,\ell_{\dva},r_{\dva},\ell_{\vda},r_{\vda})$ and $(V',\ell'_{\dva},r'_{\dva}$,
	$\ell'_{\vda},r'_{\vda})$  be two representations of $(A,\dva,\vda)$.
If there exists a linear isomorphism $\phi:V\rightarrow V'$  such that
	\begin{align*}
	 \phi \ell_\ast(x) (u )=\ell'_\ast(x)\phi(u ),\quad   \phi r_\ast(x) (u )=r'_\ast(x)\phi(u ),\quad\ast\in\{\dva,\vda\}, x\in A, u \in V ,
	\end{align*}
then we say that $(V,\ell_{\dva},r_{\dva},\ell_{\vda},r_{\vda})$ and $(V',\ell'_{\dva},r'_{\dva},\ell'_{\vda},r'_{\vda})$ are {\bf equivalent}.
\end{defn}
\begin{coro}
	Let $(A,\dva,\vda)$ be a quadratic \da. Then the adjoint representation $(A, L_{\dva},R_{\dva},
L_{\vda},R_{\vda})$  is equivalent to  its dual representation $(A^*, -R^*_{\oba},L^*_{\vda},R^*_{\dva},L^*_{\oba})$.
\end{coro}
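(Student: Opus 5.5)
The natural candidate for the equivalence is the isomorphism induced by the form $\omega$. Define $\phi:A\to A^*$ by
$$\langle \phi(x), y\rangle:=\omega(x,y),\quad x,y\in A.$$
Since $\omega$ is nondegenerate and $A$ is finite-dimensional, $\phi$ is a linear isomorphism. By Example~\mref{ex:dual}, both $(A, L_{\dva},R_{\dva},L_{\vda},R_{\vda})$ and $(A^*, -R^*_{\oba},L^*_{\vda},R^*_{\dva},L^*_{\oba})$ are already representations. So it only remains to check the four intertwining identities
$$\phi L_{\dva}(x)=-R^*_{\oba}(x)\phi,\quad \phi R_{\dva}(x)=L^*_{\vda}(x)\phi,\quad \phi L_{\vda}(x)=R^*_{\dva}(x)\phi,\quad \phi R_{\vda}(x)=L^*_{\oba}(x)\phi .$$
I would pair each identity, applied to $u\in A$, against an arbitrary $v\in A$, and use \meqref{eq:rho-star} to move the operator onto $v$. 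Each identity then becomes a scalar identity for $\omega$, to be derived from \meqref{eq:omega}, \meqref{eq:omega1} and antisymmetry.

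\emph{First identity.} It reads $\omega(x\dva u,v)=-\omega(u,v\oba x)$. By antisymmetry the left side equals $-\omega(v,x\dva u)$. The first identity in \meqref{eq:omega}, applied to the triple $(v,x,u)$, gives $\omega(v,x\dva u)=\omega(u,v\oba x)$, which settles it.

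\emph{Second identity.} It reads $\omega(u\dva x,v)=\omega(u,x\vda v)$. By \meqref{eq:omega1}, $\omega(u\dva x,v)=-\omega(x\vda v,u)$, and antisymmetry turns this into the right side.

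\emph{Third identity.} It reads $\omega(x\vda u,v)=\omega(u,v\dva x)$. By antisymmetry the right side is $-\omega(v\dva x,u)$. Applying \meqref{eq:omega1} to the triple $(v,x,u)$ gives $-\omega(v\dva x,u)=\omega(x\vda u,v)$.

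\emph{Fourth identity.} It reads $\omega(u\vda x,v)=\omega(u,x\oba v)$. This is exactly the second identity in \meqref{eq:omega}.

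No genuine obstacle is expected. The only care needed is bookkeeping: keeping track of which argument the transposed operator acts on, since $R^*$ pairs as $\langle R^*(x)f,v\rangle=\langle f, v\ast x\rangle$ and $L^*$ as $\langle L^*(x)f,v\rangle=\langle f,x\ast v\rangle$, and keeping the signs from antisymmetry and from $\oba=\vda-\dva$ consistent. Once the four identities hold, $\phi$ is an equivalence of representations.
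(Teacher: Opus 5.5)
Your proposal is correct and follows the paper's proof essentially verbatim. You use the same isomorphism $\phi$ induced by $\omega$ and verify the same four intertwining identities by pairing against an arbitrary element, using \eqref{eq:omega}, \eqref{eq:omega1} and antisymmetry; your signs and argument orders check out in each case.
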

\begin{proof}
Suppose that $\omega$ is a nondegenerate antisymmetric bilinear form on $A$. Define a linear isomorphism $\phi: A\to A^*$ by defining $\langle \phi(x), y\rangle:=\omega(x,y)$ for all $x,y\in A$. By ~\meqref{eq:omega}, we obtain
$$\langle \phi(L_{\dva}(x) y),z\rangle=\omega(x\dva y,z)=-\omega(y,z\oba x)=-\langle \phi(y), R_{\oba}(x)z\rangle=\langle-R_{\oba}^*(x)\phi(y),z\rangle,$$
proving $\phi(L_{\dva}(x)y)=-R^*_{\oba}(x)\phi(y)$. Similarly, 
$$\langle \phi(R_{\dva}(x) y),z\rangle=\omega(y\dva x,z)\stackrel{\meqref{eq:omega1}}{=}\omega(y,x\vda z)=\langle L^*_{\vda}(x)\phi(y),z\rangle.$$
$$\langle \phi(L_{\vda}(x) y),z\rangle=\omega(x\vda y,z)\stackrel{\meqref{eq:omega1}}{=}\omega(y,z\dva x)=\langle R_{\dva}^*(x)\phi(y),z\rangle.$$
$$\langle \phi(R_{\vda}(x) y),z\rangle=\omega(y\vda x,z)\stackrel{\meqref{eq:omega}}{=}\omega(y,x\oba  z)=\langle L^*_{\oba}(x)\phi(y),z\rangle.$$
Thus, $\phi(R_{\dva}(x)y)=L^*_{\vda}(x)\phi(y),\,\phi(L_{\vda}(x) y)=R_{\dva}^*(x)\phi(y),\,\phi(R_{\vda}(x) y)= L^*_{\oba}(x)\phi(y)$.
\end{proof}

\section{Matched pairs, Manin triples of diassociative algebras and diassociative bialgebras}
\mlabel{sec:3}
In this section,  we introduce the notions of  matched pairs and  Manin triples of \das. Based on a particular matched pair of \das, we introduce the notion of a \dabi.  We then establish the equivalence among such matched pairs, Manin triples of \das and \dabis.

\subsection{Matched pairs  and Manin triples of diassociative  algebras}

\begin{defn}	\mlabel{defn:matched}
	Let $(A,\dva,\vda)$ and $(B,\dashv_B,\vdb)$ be \das.
	Let $\ell_{\dva},r_{\dva},\ell_{\vda},r_{\vda}:A\to \End(B)$ and $\ell_{\dashv_B},r_{\dashv_B},\ell_{\vdb},r_{\vdb}: B\to \End(A)$ be linear maps. Define two binary operations $\dashv_{A\bowtie B},\vd_{A\bowtie B}$ on the direct sum $A\oplus B$ by
	\begin{eqnarray}
		(x+a)\dashv_{A\bowtie B} (y+b):=(x\dva y+r_{\dashv_B}(b)x+\ell_{\dashv_B}(a)y)+(a\dashv_B b+\ell_{\dva}(x)b+r_{\dva}(y)a),\mlabel{eq:m1}\\
		(x+a)\vd_{A\bowtie B} (y+b):=(x\vda y+r_{\vdb}(b)x+\ell_{\vdb}(a)y)+(a\vdb b+\ell_{\vda}(x)b+r_{\vda}(y)a),\mlabel{eq:m2}
	\end{eqnarray}
	for all $x,y\in A$ and $a,b\in B$. If $(A\oplus B, \dashv_{A\bowtie B},\vd_{A\bowtie B})$ is a \da, denoted by $A\bowtie_{\ell_{\dashv_B},r_{\dashv_B},\ell_{\vdb},r_{\vdb}}^{\ell_{\dva},r_{\dva},\ell_{\vda},r_{\vda}} B$ or simply $A\bowtie B$,
	then the $10$-tuple
	$(A,B,\ell_{\dva},r_{\dva},\ell_{\vda},r_{\vda},\ell_{\dashv_B},r_{\dashv_B},\ell_{\vdb},r_{\vdb})$
	is called a {\bf matched pair of diassociative algebras}.
\end{defn}

The structure of matched pairs of \das can be characterized through their representations and several compatibility conditions.
\begin{lemma}
	Let $(A,\dva,\vda)$ and $(B,\dashv_B,\vdb)$ be  diassociative algebras. Let $\ell_{\dva},r_{\dva},\ell_{\vda},r_{\vda}:A\rightarrow \End(B)$ and $\ell_{\dashv_B},r_{\dashv_B},\ell_{\vdb},r_{\vdb}:B\rightarrow \End(A)$ be linear maps. Then $(A,B,\ell_{\dva},r_{\dva},\ell_{\vda},r_{\vda},\ell_{\dashv_B},r_{\dashv_B},\ell_{\vdb}$,
	$r_{\vdb})$ is a matched pair of \das if and only if
	\begin{enumerate}
		\item $(B,\ell_{\dva},r_{\dva},\ell_{\vda},r_{\vda})$ is a representation of $(A,\dva,\vda)$,
		\item $(A,\ell_{\dashv_B},r_{\dashv_B},\ell_{\vdb},r_{\vdb})$ is a representation of $(B,\dashv_B,\vdb)$, and
		\item the following compatibility conditions hold: for all $x, y \in A$ and $a, b \in B$,
\begin{small}
		\begin{flalign}
			\ell_{\dva}(x)(a\dashv_B b)
			&=\ell_{\dva}(x)(a\vdb b)=(\ell_{\dva}(x)a)\dashv_B b+\ell_{\dva}(r_{\dashv_B}(a)x)b,\mlabel{eq:comp1}\\
			\ell_{\dashv_B}(a)(x\dva y)
				&=\ell_{\dashv_B}(a)(x\vda y)=(\ell_{\dashv_B}(a)x)\dva y+\ell_{\dashv_B}(r_{\dva}(x)a)y,\mlabel{eq:comp2}\\
			x\dva(r_{\dashv_B}(a)y)+r_{\dashv_B}(\ell_{\dva}(y)a)x
				&=x\dva (r_{\vdb}(a)y)+r_{\dashv_B}(\ell_{\vda}(y)a)x=r_{\dashv_B}(a)(x \dva y),\mlabel{eq:comp3}\\
			a\dashv_B(r_{\dva}(x)b)+r_{\dva}(\ell_{\dashv_B}(b)x)a	&=a\dashv_B (r_{\vda}(x)b)+r_{\dva}(\ell_{\vdb}(b)x)a=r_{\dva}(x)(a \dashv_B b),\mlabel{eq:comp4}\\
			x\dva(\ell_{\dashv_B}(a)y)+r_{\dashv_B}(r_{\dva}(y)a)x	&=x\dva (\ell_{\vdb}(a)y)+r_{\dashv_B}(r_{\vda}(y)a)x
			=(r_{\dashv_B}(a)x) \dva y+	\ell_{\dashv_B}(\ell_{\dva}(x)a)y,\mlabel{eq:comp5}\\
			a\dashv_B(\ell_{\dva}(x)b)+r_{\dva}(r_{\dashv_B}(b)x)a	&=a\dashv_B (\ell_{\vda}(x)b)+r_{\dva}(r_{\vdb}(b)x)a=(r_{\dva}(x)a) \dashv_B b+\ell_{\dva}(\ell_{\dashv_B}(a)x)b,\mlabel{eq:comp6}\\
			\ell_{\vdb}(a)(x\dva y)	&=(\ell_{\vdb}(a)x)\dva y+\ell_{\dashv_B}(r_{\vda}(x)a)y,\mlabel{eq:comp7}\\
			\ell_{\vda}(x)(a\dashv_B b)	&=(\ell_{\vda}(x)a)\dashv_B b+\ell_{\dva}(r_{\vdb}(a)x)b,\mlabel{eq:comp8}\\
			r_{\dashv_B}(a)(x\vda y)	&=x\vda(r_{\dashv_B}(a)y)+r_{\vdb}(\ell_{\dva}(y)a)x,\mlabel{eq:comp9}\\
			r_{\dva}(x)(a\vdb b)	&=a\vdb(r_{\dva}(x)b)+r_{\vda}(\ell_{\dashv_B}(b)x)a,\mlabel{eq:comp10}\\
			(r_{\vdb}(a)x)\dva y+\ell_{\dashv_B}(\ell_{\vda}(x)a)y	&=x\vda(\ell_{\dashv_B}(a)y)+r_{\vdb}(r_{\dva}(y)a)x,\mlabel{eq:comp11}\\
			(r_{\vda}(x)a)\dashv_B b+\ell_{\dva}(\ell_{\vdb}(a)x)b	&=a\vdb(\ell_{\dva}(x)b)+r_{\vda}(r_{\dashv_B}(b)x)a,\mlabel{eq:comp12}\\	
			r_{\vdb}(a)(x\dva y)	&=r_{\vdb}(a)(x\vda y)=r_{\vdb}(\ell_{\vda}(y)a)x+x\vda(r_{\vdb}(a)y),\mlabel{eq:comp13}\\
			r_{\vda}(x)(a\dashv_B b)	&=r_{\vda}(x)(a\vdb b)=r_{\vda}(\ell_{\vdb}(b)x)a+a\vdb(r_{\vda}(x)b),\mlabel{eq:comp14}\\
			(\ell_{\dashv_B}(a)x)\vda y+\ell_{\vdb}(r_{\dva}(x)a)y	&=(\ell_{\vdb}(a)x)\vda y+\ell_{\vdb}(r_{\vda}(x)a)y=\ell_{\vdb}(a)(x\vda y),\mlabel{eq:comp15}\\
			(\ell_{\dva}(x)a)\vdb b+\ell_{\vda}(r_{\dashv_B}(a)x)b	&=(\ell_{\vda}(x)a)\vdb b+\ell_{\vda}(r_{\vdb}(a)x)b=\ell_{\vda}(x)(a\vdb b),\mlabel{eq:comp16}\\
			(r_{\dashv_B}(a)x)\vda y+\ell_{\vdb}(\ell_{\dva}(x)a)y	&=(r_{\vdb}(a)x)\vda y+\ell_{\vdb}(\ell_{\vda}(x)a)y
			=x\vda(\ell_{\vdb}(a)y)+r_{\vdb}(r_{\vda}(y)a)x,\mlabel{eq:comp17}\\
			(r_{\dva}(x)a)\vdb b+\ell_{\vda}(\ell_{\dashv_B}(a)x)b	&=(r_{\vda}(x)a)\vdb b+\ell_{\vda}(\ell_{\vdb}(a)x)b
			=a\vdb(\ell_{\vda}(x)b)+r_{\vda}(r_{\vdb}(b)x)a.\mlabel{eq:comp18}
		\end{flalign}
\end{small}
	\end{enumerate}
\mlabel{lem:match}
\end{lemma}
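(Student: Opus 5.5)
The plan is a direct expansion of the diassociative axioms \meqref{eq:dia1}--\meqref{eq:dia3} for the operations $\dashv_{A\bowtie B}$ and $\vd_{A\bowtie B}$ defined in \meqref{eq:m1}--\meqref{eq:m2}. All products are bilinear, and each of the five identities in \meqref{eq:dia1}--\meqref{eq:dia3} is trilinear in $(X,Y,Z)$ with $X=x+a$, $Y=y+b$, $Z=z+c$. Hence $(A\oplus B,\dashv_{A\bowtie B},\vd_{A\bowtie B})$ is a \da if and only if each identity holds when $X,Y,Z$ range separately over $A$ or $B$. This gives $2^3=8$ type patterns: $AAA$, $AAB$, $ABA$, $BAA$, $ABB$, $BAB$, $BBA$, $BBB$. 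For each pattern I will compute both sides and project onto the components $A$ and $B$.

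The pattern $AAA$, and symmetrically $BBB$, returns exactly the diassociativity of $A$ and of $B$, which holds by hypothesis.

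For the patterns with exactly one element from $B$, the $B$-component of each identity is linear in that single element of $B$. These components are precisely the identities obtained from the semi-direct product $A\ltimes B$. By Proposition~\mref{pro:repr}, they are equivalent to \meqref{eq:repr1}--\meqref{eq:repr9} for $(\ell_{\dva},r_{\dva},\ell_{\vda},r_{\vda})$, which is condition (1). Symmetrically, the $A$-components of the patterns with exactly one element from $A$ give condition (2).

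The remaining pieces are the $A$-components of the patterns with one $B$-element and the $B$-components of the patterns with two $B$-elements. These pieces mix the two actions, and they should produce exactly \meqref{eq:comp1}--\meqref{eq:comp18}. For instance, consider the $B$-component of $X\dashv(Y\dashv Z)=(X\dashv Y)\dashv Z$, and of its variant with $\vd$ in the inner slot, with $X=x\in A$ and $Y=a,Z=b\in B$. This yields $\ell_{\dva}(x)(a\dashv_B b)=\ell_{\dva}(x)(a\vdb b)=(\ell_{\dva}(x)a)\dashv_B b+\ell_{\dva}(r_{\dashv_B}(a)x)b$, which is \meqref{eq:comp1}. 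Similarly, the pattern $BAA$ gives \meqref{eq:comp2}. I will organize the bookkeeping as follows: for each of \meqref{eq:dia1}, \meqref{eq:dia2}, \meqref{eq:dia3} and each of the six mixed patterns, record the one mixed-component equation. Equalities sharing a common side are then grouped into the chained equations of the statement, with the odd-numbered conditions coming from $A$-components and the even-numbered ones from their $A\leftrightarrow B$ mirror images.

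The converse follows because every step is an equivalence. All components of all identities are accounted for by (1), (2) and \meqref{eq:comp1}--\meqref{eq:comp18}, so these conditions together imply the diassociative axioms on $A\oplus B$.

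The main obstacle is purely combinatorial. One must check that the eighteen listed conditions are exactly the nontrivial mixed components, with none missing and none redundant. One must also confirm that chains such as \meqref{eq:comp3} or \meqref{eq:comp17} correctly merge the separate equalities from the first and second equal signs in \meqref{eq:dia1} and \meqref{eq:dia3}. I would handle this with a table indexed by identity and pattern, and I would exploit the $A\leftrightarrow B$ symmetry to halve the verification.
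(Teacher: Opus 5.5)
Your proposal is correct and is the same direct computation the paper's one-line proof refers to: the componentwise expansion of \meqref{eq:dia1}--\meqref{eq:dia3} on $A\oplus B$ over the type patterns. Each action yields $15$ representation equalities, and the mixed components give $15$ $A$-valued plus $15$ $B$-valued equalities, which are exactly the $30$ in \meqref{eq:comp1}--\meqref{eq:comp18}. One cosmetic slip: \meqref{eq:comp1} is $B$-valued and \meqref{eq:comp2} is $A$-valued, as your own derivation of \meqref{eq:comp1} from a $B$-component shows, so your rule ``odd-numbered conditions come from $A$-components'' only holds from \meqref{eq:comp3} onward.
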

\begin{proof}
This follows from a direct computation.
\end{proof}
\begin{defn}\mlabel{defn:dia}
	A {\bf Manin triple of diassociative algebras} is a collection $((A\oplus B,\dv_d,\vd_d,\omega ),(A,\dva,\vda),(B,\dvb,\vdb))$ such that $(A\oplus B,\dv_d,\vd_d,\omega)$ is a quadratic diassociative algebra, $(A,\dva,\vda),(B,\dvb,\vdb)$ are diassociative subalgebras and $A,B$ are isotropic with respect to $\omega $, that is, 
	\begin{equation*}
		\omega (x,y)=0=\omega (a,b),\; x,y\in A, a,b\in B.
	\end{equation*}
	A   Manin triple of diassociative algebras is called {\bf standard} if $B=A^{*}$ and $\omega=\omega_d$ is given by
	\begin{equation}\mlabel{eq:omd}
		\omega_d(x+a^*,y+b^*)=\langle a^*,y\rangle-\langle x,b^*\rangle,\; x,y\in A, a^{*},b^{*}\in A^{*}.
	\end{equation}
\end{defn}

As in \cite{Bai}, every Manin triple of diassociative algebras is isomorphic to a standard one.
The following result establishes the fundamental equivalence between a certain matched pair of \das and a Manin triple of \das.
\begin{theorem}\mlabel{thm:Manin}
	Let $(A,\dva,\vda)$ and $(A^*,\dvs,\vds)$ be \das. Then $(A,A^*,-R^*_{\oba}, L^*_{\vda},\\
R^*_{\dva},L^*_{\oba}$, $-R^*_{\obas},L^*_{\vds},R^*_{\dvs},L^*_{\obas})$ is a matched pair of \das
	if and only if  $((A\oplus A^*,\dashv_d,\vd_d,\omega_d),(A,\dva,\vda), (A^*,\dvs,\vds))$ is a standard Manin triple of \das.
\end{theorem}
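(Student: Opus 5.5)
The plan is to show that both conditions say the same thing: the bilinear form $\omega_d$ is invariant on $A\oplus A^*$ with respect to the operations $\dv_d,\vd_d$ defined by \meqref{eq:m1}--\meqref{eq:m2} using the given coadjoint-type maps. Both directions go through this one observation.

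For the forward direction, assume the $10$-tuple is a matched pair. Then $(A\oplus A^*,\dv_d,\vd_d)$ is a diassociative algebra by Definition~\mref{defn:matched}. It contains $A$ and $A^*$ as subalgebras. The form $\omega_d$ of \meqref{eq:omd} is nondegenerate and antisymmetric, and $A$, $A^*$ are isotropic by construction. It therefore remains to check the invariance \meqref{eq:omega}, which by the Remark is equivalent to \meqref{eq:omega1} and \meqref{eq:omega2}.

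Since everything is bilinear, invariance can be checked on homogeneous triples, each argument lying in $A$ or in $A^*$.
\begin{itemize}
\item Triples lying entirely in $A$, or entirely in $A^*$, give zero on both sides by isotropy and closure under the products.
\item For mixed triples, say $x,y\in A$ and $a^*\in A^*$, expand $x\dv_d a^*=r_{\dvs}(a^*)x+\ell_{\dva}(x)a^*$ with $\ell_{\dva}=-R^*_{\oba}$ and $r_{\dvs}=L^*_{\vds}$. Pairing against the remaining argument via $\omega_d$ gives expressions such as $\langle R^*_{\oba}(x)a^*,y\rangle=\langle a^*,y\oba x\rangle$.
\end{itemize}
For each mixed configuration, one side of \meqref{eq:omega1} or \meqref{eq:omega2} reduces to $\pm\langle a^*,\text{product in }A\rangle$ or $\pm\langle \text{product in }A^*, x\rangle$. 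The other side reduces to the same quantity precisely because of the chosen dual maps. This is the same computation as in the Corollary showing that the adjoint and dual representations are equivalent, done now on $A\oplus A^*$. Configurations with two arguments in $A^*$ are handled symmetrically, with the roles of $A$ and $A^*$ swapped.

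For the converse, assume the standard Manin triple. Then $A\oplus A^*$ carries some diassociative structure $\dv_d,\vd_d$ extending those on $A$ and $A^*$. Decompose a mixed product as $x\dv_d a^*=(\cdot)_A+(\cdot)_{A^*}$ and determine each component by pairing with $A^*$ or $A$ through $\omega_d$.
\begin{itemize}
\item Isotropy kills the terms pairing within $A$ or within $A^*$.
\item Invariance \meqref{eq:omega1} and \meqref{eq:omega2} then forces the $A^*$-component to be $-R^*_{\oba}(x)a^*$, the $A$-component to be $L^*_{\vds}(a^*)x$, and similarly for the other mixed products.
\end{itemize}
Thus the products coincide with \meqref{eq:m1}--\meqref{eq:m2} for the stated $10$-tuple. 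Diassociativity of $A\oplus A^*$ then means exactly that this tuple is a matched pair.

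The main obstacle is the bookkeeping in the mixed cases. Each of \meqref{eq:omega1} and \meqref{eq:omega2} has to be checked across all placements of $A$ and $A^*$, keeping the signs in $\omega_d$ straight; antisymmetry makes $\omega_d(a^*,y)=\langle a^*,y\rangle$ but $\omega_d(x,b^*)=-\langle x,b^*\rangle$. The combination $\oba=\vda-\dva$ must also be matched correctly, particularly in \meqref{eq:omega2}, which involves both $\dva$ and $\vda$. I would tabulate the cases (element of $A^*$ in the first, second, or third slot) and verify each by a one-line pairing computation, relying on the Remark's reformulation to reduce the checks to two identities.
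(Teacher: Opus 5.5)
Your proposal is correct and follows essentially the paper's proof. In the forward direction you verify invariance of $\omega_d$ on the matched-pair products by direct pairing; the paper does this with general elements $x+a^*,\,y+b^*,\,z+c^*$ at once rather than tabulating cases. In the converse you recover the mixed products, e.g.\ $x\dashv_d a^*=L^*_{\vds}(a^*)x-R^*_{\oba}(x)a^*$, by pairing against $A$ and $A^*$ using isotropy and invariance, exactly as the paper does.
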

\begin{proof}
	($\Rightarrow$)
	Suppose that $(A,A^*,-R^*_{\oba},L^*_{\vda},R^*_{\dva},L^*_{\oba},-R^*_{\obas},L^*_{\vds},R^*_{\dvs},L^*_{\obas})$ is a matched pair of \das.
	Then by Definition~\mref{defn:matched}, $(A\oplus A^*,\dashv_{A \bowtie A^*},\vd_{A \bowtie A^*})$ is a \da. It suffices to verify that $\omega_d$, as defined by ~\meqref{eq:omd}, is invariant, i.e., that it  satisfies ~\meqref{eq:omega}. For all $x,y,z\in A$ and $a^*,b^*, c^*\in A^*$, we have
	\begin{eqnarray*}
		&&\omega_d(x+a^*,  (y+b^*) \dashv_{A \bowtie A^*} (z+c^*))\\
&=&\omega_d (x+a^*,y\dva z+L^*_{\vds}(c^*)y-R^*_{\obas}(b^*)z+b^*\dvs c^*-R^*_{\oba}(y)c^*+L^*_{\vda}(z)(b^*))\\
		&=&\langle a^*,y\dva z+L^*_{\vds}(c^*)y-R^*_{\obas}(b^*)z \rangle-\langle x,b^*\dvs c^*-R^*_{\oba}(y)c^*+L^*_{\vda}(z)(b^*)\rangle\\
&=&\langle a^*,y\dva z \rangle+ \langle c^*\vds a^* ,y\rangle-\langle a^*\obas b^*,z \rangle
-\langle x,b^*\dvs c^* \rangle+\langle x \oba y,  c^*\rangle-\langle z\vda x,b^*\rangle.\\
		&&\omega_d(z+c^*, (x+a^*) \ob_{A \bowtie A^*} (y+b^*))\\
&=&\omega_d (z+c^*,x\oba y -L^*_{\dvs}(b^*)x+R^*_{\vds}(a^*)y+a^*\obas b^*+R^*_{\vda}(x)b^*-L^*_{\dva}(y)a^*)\\
&=&\langle c^*,x\oba y -L^*_{\dvs}(b^*)x+R^*_{\vds}(a^*)y \rangle-\langle z,a^*\obas b^*+R^*_{\vda}(x)b^*-L^*_{\dva}(y)a^* \rangle\\
&=&\langle c^*,x\oba y \rangle-\langle b^*\dvs c^*,x\rangle+\langle c^*\vds a^*,y \rangle-\langle z, a^*\obas b^*\rangle-\langle z\vda x,b^* \rangle+\langle y\dva z,a^*\rangle.
	\end{eqnarray*}
Thus, $$\omega_d(x+a^*,  (y+b^*) \dashv_{A \bowtie A^*} (z+c^*))=\omega_d((z+c^*), (x+a^*) \ob_{A \bowtie A^*} (y+b^*)).$$
On the other hand,
	\begin{eqnarray*}
		&&\omega((x+a^*)\vd_{A \bowtie A^*} (y+b^*), z+c^*)\\&=&\omega (x\vda y+L^*_{\obas}(b^*)x+R^*_{\dvs}(a^*)y+a^*\vds b^*+R^*_{\dva}(x)b^*+L^*_{\oba}(y)(a^*),z+c^*)\\
&=&\langle a^*\vds b^*,z \rangle+ \langle b^*,z\dva x\rangle+\langle a^*,y\oba z \rangle-\langle x\vda y,c^* \rangle-\langle x, b^*\obas c^*\rangle-\langle y,c^*\dvs a^*\rangle.\\
		&&\omega(x+a^*, (y+b^*)\ob_{A \bowtie A^*}(z+c^*))\\
&=&\omega (x+a^*,y\oba z-L^*_{\dvs}(c^*)y+R^*_{\vds}(b^*)z+b^*\obas c^*+R^*_{\vda}(y)c^*-L^*_{\dva}(z)b^*)\\
&=&\langle a^*,y\oba z \rangle-\langle c^*\dvs a^*,y\rangle+\langle a^*\vds b^*,z \rangle-
		\langle x, b^*\obas c^*\rangle-\langle x\vda y,c^* \rangle+ \langle z\dva x,b^*\rangle.
	\end{eqnarray*}
This leads to
$$\omega((x+a^*)\vd_{A \bowtie A^*} (y+b^*), z+c^*)=\omega(x+a^*, (y+b^*)\ob_{A \bowtie A^*}( z+c^*)).$$
\smallskip
\noindent	
($\Leftarrow$) Suppose that $((A\oplus A^*,\dashv_d,\vd_d,\omega_d),(A,\dva,\vda), (A^*,\dvs,\vds))$ is a standard Manin triple of diassociative algebras. By Definition \mref{defn:matched},  it suffices to prove that $\dashv_d$ and $\vd_d$ satisfy ~\meqref{eq:m1}--\meqref{eq:m2}. For all $x,y,z\in A$ and all $a^*, b^*,c^*\in A^*$,
	\begin{eqnarray*}
&\omega_d( x\dashv_d a^*,y)=-\omega_d( y,x\dashv_d a^*)\stackrel{\eqref{eq:omega}}{=}-\omega_d(a^*, y\oba x)
=\langle -R^*_{\oba}(x)a^*,y \rangle.\\
&\omega_d( x\dashv_d a^*,b^*)\stackrel{\eqref{eq:omega1}}{=}\omega_d(x,a^*\vds b^*)=\omega_d(x,a^*\vds b^*)
=-\langle L^*_{\vds}(a^*)x,b^* \rangle.
	\end{eqnarray*}
This gives $$\omega_d( x\dashv_d a^*,y+b^*)=\langle -R^*_{\oba}(x)a^*,y \rangle-\langle L^*_{\vds}(a^*)x,b^* \rangle=\omega_d(L^*_{\vds}(a^*)x-R^*_{\oba}(x)a^*,y+b^*).$$
Thus,  $x\dashv_d a^*=L^*_{\vds}(a^*)x-R^*_{\oba}(x)a^*$.  By a similar argument, we obtain
	\begin{eqnarray*}
&&\omega_d( a^*\dashv_d x,y)=\omega_d(a^*,x\vd_d y)=\langle L^*_{\vda}(x)a^*,y \rangle=
\omega_d( L^*_{\vda}(x)a^* x,y)
,\\
&&
\omega_d( a^*\dashv_d x,b^*)=-\omega_d(x, b^*\obas a^*)=\langle R^*_{\obas}(a^*)x, b^* \rangle=-\omega_d( R^*_{\obas}(a^*)x, b^*).
	\end{eqnarray*}
So $a^*\dashv_d x=-R^*_{\obas}(a^*)x+L^*_{\vda}(x)a^*$, and hence $\dashv_d$ satisfies ~\eqref{eq:m1} with $\ell_{\dva}=-R^*_{\oba}, r_{\dva}=L^*_{\vda},\ell_{\dashv_B}=-R^*_{\obas},r_{\dashv_B}=L^*_{\vds}$.
Similarly, we can verify that $\vd_d$ satisfies ~\eqref{eq:m2} with $\ell_{\vda}=R^*_{\dva}, r_{\vda}=L^*_{\oba},\ell_{\vdb}=R^*_{\dvs},r_{\vdb}=L^*_{\obas}$.
\end{proof}

Building upon the preceding preparatory steps, we now arrive at the central concept of a diassociative bialgebra.
\begin{defn}
	A {\bf diassociative bialgebra} is a quintuple $(A,\dva,\vda,\Delta_{\dva},\Delta_{\vda})$ consisting of a vector space $A$  together with linear maps
	$\dva,\vda : A\ot A \to A ,\, \Delta_{\dva},\Delta_{\vda}: A \to  A\ot A $
	such that
	\begin{enumerate}
		\item $(A,\dva,\vda)$ is a \da,
		\item $(A,\Delta_{\dva},\Delta_{\vda})$ is a \dca, and
		\item the following compatibility conditions hold: for all $x,y \in A$,
\begin{small}
		\begin{flalign}
			&\Delta_{\vda}(x\dva y)=(\id \otimes L_{\dva} (x))\Delta_{\vda}(y)-(R_{\oba}(y)\otimes\id )\Delta_{\vda}(x)
			=(\id \otimes L_{\dva}(x))\Delta_{\oba}(y)+(R_{\dva}(y)\otimes \id )\Delta_{\vda}(x),\mlabel{eq:bi2}\\
			&\sigma((L_{\vda}(y)\otimes\id)\Delta_{\vda}(x))-(L_{\dva}(x)\otimes \id)\Delta_{\oba}(y)
			=(L_{\dva}(x)\otimes \id)\Delta_{\dva}(y)+\sigma((L_{\oba}(y)\otimes\id)\Delta_{\vda}(x))\mlabel{eq:bi3}\\
			&\quad\quad\quad\quad\quad\quad\quad\quad\quad\quad\quad\quad\quad\quad\quad\quad\,=\sigma((\id \otimes R_{\dva}(y))\Delta_{\vda}(x))+(\id \otimes R_{\oba}(x))\Delta_{\oba}(y),\mlabel{eq:bi4}\\
			&\sigma((\id \otimes R_{\vda}(y))\Delta_{\vda}(x))-(\id \otimes R_{\oba}(x) )\Delta_{\dva}(y)
			=(\id \otimes R_{\dva}(x))\Delta_{\dva}(y)+\sigma((\id \otimes R_{\vda}(y))\Delta_{\oba}(x))\mlabel{eq:bi5}\\
			&\quad\quad\quad\quad\quad\quad\quad\quad\quad\quad\quad\quad\quad\quad\quad\quad\,=(L_{\vda}(x)\otimes\id)\Delta_{\dva}(y)+\sigma((L_{\oba}(y)\otimes\id)\Delta_{\oba}(x)),\mlabel{eq:bi6}\\
			&\Delta_{\dva}(x\vda y)=(\id \otimes L_{\vda} (x))\Delta_{\dva}(y)-(R_{\vda}(y)\otimes\id )\Delta_{\oba}(x)=(\id \otimes L_{\oba}(x))\Delta_{\dva}(y)+(R_{\vda}(y)\otimes \id )\Delta_{\dva}(x),\mlabel{eq:bi8}\\
			&\Delta_{\dva}(x\dva y)=(R_{\dva}(y)\otimes \id )\Delta_{\dva}(x)-(\id \otimes L_{\oba}(x))\Delta_{\oba}(y),\mlabel{eq:bi9}\\
			&\Delta_{\vda}(x\vda y)=(\id \otimes L_{\vda}(x))\Delta_{\vda}(y)-(R_{\oba}(y)\otimes \id )\Delta_{\oba}(x),\mlabel{eq:bi10}
\end{flalign}
where $\Delta_{\oba}:=\Delta_{\vda}-\Delta_{\dva}$.
\end{small}

	\end{enumerate}
\end{defn}
The following result shows that diassociative bialgebras precisely correspond to a certain matched pair of \das.
\begin{theorem}\mlabel{thm:matched}
	Let $(A,\dva,\vda)$ and $(A^*,\dvs,\vds)$ be \das. Let $\Delta_{\dva},\Delta_{\vda}$ be the linear dual  of $\dvs$ and $\vds$, respectively. Then 
$(A,A^*,-R^*_{\oba},L^*_{\vda},R^*_{\dva},L^*_{\oba},-R^*_{\obas},L^*_{\vds},R^*_{\dvs},L^*_{\obas})$ is a matched pair of \das
if and only if $(A,\dva,\vda,\Delta_{\dva},\Delta_{\vda})$ is a \dabi.
\end{theorem}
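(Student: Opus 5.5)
We will use Lemma~\mref{lem:match}. Since $(A,\dva,\vda)$ and $(A^*,\dvs,\vds)$ are assumed to be \das, Proposition~\mref{prop:dia-coalg} shows that $(A,\Delta_{\dva},\Delta_{\vda})$ is a \dca. By Example~\mref{ex:dual}~(\mref{it:dual2}), $(A^*,-R^*_{\oba},L^*_{\vda},R^*_{\dva},L^*_{\oba})$ is a representation of $(A,\dva,\vda)$. Applying the same example to $A^*$ and identifying $A^{**}=A$, $(A,-R^*_{\obas},L^*_{\vds},R^*_{\dvs},L^*_{\obas})$ is a representation of $(A^*,\dvs,\vds)$. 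So conditions (1) and (2) of Lemma~\mref{lem:match} hold automatically. The tuple is therefore a matched pair if and only if the eighteen compatibility conditions \meqref{eq:comp1}--\meqref{eq:comp18} hold with
\[
\ell_{\dva}=-R^*_{\oba},\; r_{\dva}=L^*_{\vda},\; \ell_{\vda}=R^*_{\dva},\; r_{\vda}=L^*_{\oba},
\]
\[
\ell_{\dashv_B}=-R^*_{\obas},\; r_{\dashv_B}=L^*_{\vds},\; \ell_{\vdb}=R^*_{\dvs},\; r_{\vdb}=L^*_{\obas}.
\]
It remains to show that this system is equivalent to \meqref{eq:bi2}--\meqref{eq:bi10}.

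To do this, pair each compatibility identity with suitable test elements and rewrite every term through $\Delta_{\dva},\Delta_{\vda}$. We use the dualities
\[
\langle a^*\dvs b^*,x\rangle=\langle a^*\ot b^*,\Delta_{\dva}(x)\rangle,
\]
the analogous one for $\vds$, and $\Delta_{\oba}=\Delta_{\vda}-\Delta_{\dva}$ for the combination $\obas$. For example, take an identity valued in $A$ and involving $x,y\in A$ and $a^*\in A^*$, such as \meqref{eq:comp2}, \meqref{eq:comp3}, \meqref{eq:comp5} or \meqref{eq:comp7}. Pair it with an arbitrary $b^*\in A^*$. Each term then becomes $\langle a^*\ot b^*,\Phi(x,y)\rangle$ or $\langle b^*\ot a^*,\Phi(x,y)\rangle$, where $\Phi$ is one of the expressions $\Delta_\ast(x\dva y)$, $(\id\ot L(x))\Delta(y)$, $(R(y)\ot\id)\Delta(x)$, and so on. The terms with swapped order produce the flip $\sigma$.

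Identities valued in $A^*$ and involving $a^*,b^*\in A^*$ and $x\in A$, such as \meqref{eq:comp1}, \meqref{eq:comp4}, \meqref{eq:comp6} or \meqref{eq:comp8}, are paired with an arbitrary $y\in A$. This produces the same kind of tensor identities in the roles of $x,y$. So we expect the eighteen conditions to collapse pairwise, by the $A\leftrightarrow A^*$ transposition symmetry, onto the bialgebra axioms. Concretely, we expect:
\begin{itemize}
\item \meqref{eq:comp2} and \meqref{eq:comp1} to give \meqref{eq:bi9} together with parts of \meqref{eq:bi2};
\item \meqref{eq:comp7} and \meqref{eq:comp8} to give \meqref{eq:bi2};
\item \meqref{eq:comp9}, \meqref{eq:comp10}, \meqref{eq:comp13} and \meqref{eq:comp14} to give \meqref{eq:bi8} and \meqref{eq:bi10};
\item the \lq\lq mixed\rq\rq\ ones \meqref{eq:comp3}--\meqref{eq:comp6}, \meqref{eq:comp11}, \meqref{eq:comp12}, \meqref{eq:comp15}--\meqref{eq:comp18} to give the $\sigma$-twisted identities \meqref{eq:bi3}--\meqref{eq:bi6}.
\end{itemize}
One then checks that each condition of the first type is exactly the transpose of a condition of the second type, so only half need to be translated. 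This mirrors the standard argument for ASI bialgebras in \cite{Bai}.

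The main obstacle is the bookkeeping. We must show that every one of the eighteen (multi-part) conditions is either equivalent to one of the displayed equations in \meqref{eq:bi2}--\meqref{eq:bi10} or follows from the others together with the diassociative axioms \meqref{eq:dia1}--\meqref{eq:dia3} and the coalgebra axioms \meqref{eq:co1}--\meqref{eq:co3}. The bialgebra definition lists fewer equations than there are compatibility conditions, so some conditions must be redundant. The plan for handling this is as follows.
\begin{enumerate}
\item Translate all conditions first.
\item Group them by the pattern of the tensor expression (for instance, $\Delta_{\vda}(x\dva y)$-type and $\Delta_{\dva}(x\vda y)$-type).
\item Use the equalities within the multi-part conditions, e.g.\ $\ell_{\dva}(x)(a\dvb b)=\ell_{\dva}(x)(a\vdb b)$. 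These dualize to coalgebra-type identities such as $\Delta_{\vda}(x\dva y)$ being expressible via $\Delta_{\oba}$, and they should identify the redundant conditions.
\end{enumerate}
Sign tracking, from the minus signs in $-R^*_{\oba}$, $-R^*_{\obas}$ and $\Delta_{\oba}$, is where errors are most likely. We would verify one representative of each group in full and state that the rest are analogous.
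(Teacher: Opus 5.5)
Your skeleton matches the paper's: reduce via Lemma~\ref{lem:match}, note that the two representation conditions hold by Example~\ref{ex:dual}, and dualize \eqref{eq:comp1}--\eqref{eq:comp18} by pairing with test elements. But you stop before the only step that is not mechanical translation, and your predicted dictionary is wrong exactly there.

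Carrying out the pairings gives:
\begin{itemize}
\item \eqref{eq:comp3}/\eqref{eq:comp16} $\leftrightarrow$ \eqref{eq:bi2};
\item \eqref{eq:comp4}/\eqref{eq:comp15} $\leftrightarrow$ \eqref{eq:bi8};
\item \eqref{eq:comp5}/\eqref{eq:comp18} $\leftrightarrow$ \eqref{eq:bi3}--\eqref{eq:bi4};
\item \eqref{eq:comp6}/\eqref{eq:comp17} $\leftrightarrow$ \eqref{eq:bi5}--\eqref{eq:bi6};
\item \eqref{eq:comp7}/\eqref{eq:comp8} $\leftrightarrow$ \eqref{eq:bi9}, not \eqref{eq:bi2};
\item \eqref{eq:comp9}/\eqref{eq:comp10} $\leftrightarrow$ \eqref{eq:bi10}.
\end{itemize}
The remaining conditions \eqref{eq:comp1}/\eqref{eq:comp14}, \eqref{eq:comp2}/\eqref{eq:comp13}, \eqref{eq:comp11} and \eqref{eq:comp12} translate into \eqref{eq:bi12}, \eqref{eq:bi14}, \eqref{eq:bi15} and \eqref{eq:bi16}. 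None of these is an axiom of a \dabi. For instance, \eqref{eq:comp2} paired with $b^*$ is $\Delta_{\oba}(x\dva y)=\Delta_{\oba}(x\vda y)=(R_{\dva}(y)\ot\id)\Delta_{\oba}(x)+(\id\ot L_{\vda}(x))\Delta_{\oba}(y)$, not \eqref{eq:bi9}.

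Your transposition heuristic also fails:
\begin{itemize}
\item \eqref{eq:comp1} and \eqref{eq:comp14} are both $A^*$-valued;
\item \eqref{eq:comp2} and \eqref{eq:comp13} are both $A$-valued;
\item \eqref{eq:comp11} and \eqref{eq:comp12} have no partners: they give two different identities, each matched by no other condition.
\end{itemize}
So ``only half need to be translated'' is not right as stated.

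Consequently, the direction ``\dabi $\Rightarrow$ matched pair'' requires proving these four identities from the axioms, and you give no argument for it. The mechanism you suggest (the internal equalities of the multi-part conditions together with \eqref{eq:dia1}--\eqref{eq:dia3} and \eqref{eq:co1}--\eqref{eq:co3}) is not what does the job. The four identities are plain linear consequences of the compatibility axioms, using $\Delta_{\oba}=\Delta_{\vda}-\Delta_{\dva}$, $L_{\oba}=L_{\vda}-L_{\dva}$ and $R_{\oba}=R_{\vda}-R_{\dva}$:
\begin{itemize}
\item \eqref{eq:bi14}: subtract \eqref{eq:bi9} from the second form of \eqref{eq:bi2}, and subtract the first form of \eqref{eq:bi8} from \eqref{eq:bi10}.
\item \eqref{eq:bi12}: subtract \eqref{eq:bi9} from the second form of \eqref{eq:bi8}, and subtract the first form of \eqref{eq:bi2} from \eqref{eq:bi10}, with $x$ and $y$ interchanged.
\item \eqref{eq:bi15}, \eqref{eq:bi16}: take linear combinations of the equalities in the chains \eqref{eq:bi3}--\eqref{eq:bi4} and \eqref{eq:bi5}--\eqref{eq:bi6}; for \eqref{eq:bi15}, first transpose one of them by $x\leftrightarrow y$ followed by $\sigma$.
\end{itemize}
With these in hand, both directions follow from the corrected dictionary. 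Your remarks that $(A,\Delta_{\dva},\Delta_{\vda})$ is a \dca by Proposition~\ref{prop:dia-coalg}, and that the representation conditions are automatic, are fine.
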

\begin{proof} By Example~\mref{ex:dual}\meqref{it:dual2}, $(A^*, -R^*_{\oba},L^*_{\vda}$,
	$R^*_{\dva},L^*_{\oba})$ is  a representation of $(A,\dva,\vda)$, and  $(A$, 
	$-R^*_{\obas}, 
L^*_{\vds},R^*_{\dvs},L^*_{\obas})$ is a representation of $(A^*,\dvs,\vds)$.
Then  by Lemma~\mref{lem:match}, $(A,A^*,-R^*_{\oba},L^*_{\vda}$,
$R^*_{\dva},L^*_{\oba}, 
-R^*_{\obas},L^*_{\vds},R^*_{\dvs},L^*_{\obas})$ is a matched pair of \das if and only if it satisfies \meqref{eq:comp1}--\meqref{eq:comp18}, where the maps are assigned as follows: 
$$\ell_{\dva}=-R^*_{\oba},\,r_{\dva}=L^*_{\vda},\,\ell_{\vda}=R^*_{\dva},\,r_{\vda}=L^*_{\oba},\,\ell_{\dvb}=-R^*_{\obas},\,r_{\dvb}=L^*_{\vds},\ell_{\vdb}=R^*_{\dvs},r_{\vdb}=L^*_{\obas}.$$
That is, for all $x,y \in A$, $a^*,b^* \in A^*$,
\begin{small}
	\begin{flalign}	
&	R^*_{\oba}(x)(a^*\dvs b^*)
		=R^*_{\oba}(x)(a^*\vds b^*)
=(R^*_{\oba}(x)a^*)\dvs b^*+R^*_{\oba}(L^*_{\vds}(a^*)x)b^*,\mlabel{eq:ppd1}\\
		&R^*_{\obas}(a^*)(x\dva y)
	=R^*_{\obas}(a^*)(x\vda y)
=(R^*_{\obas}(a^*)x)\dva y+R^*_{\obas}(L^*_{\vda}(x)a^*)y,\mlabel{eq:ppd2}\\
		&x\dva(L^*_{\vds}(a^*)y)-L^*_{\vds}(R^*_{\oba}(y)a^*)x
		=x\dva L^*_{\obas}(a^*)y+L^*_{\vds}(R^*_{\dva}(y)a^*)x
=L^*_{\vds}(a^*)(x \dva y),\mlabel{eq:ppd3}\\
	&a^*\dvs(L^*_{\vda}(x)b^*)-L^*_{\vda}(R^*_{\obas}(b^*)x)a^*	=a^*\dvs (L^*_{\oba}(x)b^*)+L^*_{\vda}(R^*_{\dvs}(b^*)x)a^*
=L^*_{\vda}(x)(a^* \dvs b^*),\mlabel{eq:ppd4}\\
		&L^*_{\vds}(L^*_{\vda}(y)a^*)x-x\dva(R^*_{\obas}(a^*)y)	=x\dva (R^*_{\dvs}(a^*)y)+L^*_{\vds}(L^*_{\oba}(y)a^*)x\nonumber\\
&\qquad\quad\quad\qquad\qquad\qquad\quad\quad\qquad	=(L^*_{\vds}(a^*)x) \dva y+	R^*_{\obas}(R^*_{\oba}(x)a^*)y,\mlabel{eq:ppd5}\\
		&L^*_{\vda}(L^*_{\vds}(b^*)x)a^*-a^*\dvs(R^*_{\oba}(x)b^*)	=a^*\dvs (R^*_{\dva}(x)b^*)+L^*_{\vda}(L^*_{\obas}(b^*)x)a^*\nonumber\\
&\quad\quad\quad\quad\quad\quad\quad\quad\qquad\quad\qquad\qquad=(L^*_{\vda}(x)a^*) \dvs b^*+(R^*_{\oba}(R^*_{\obas}(a^*)x)b^*,\mlabel{eq:ppd6}\\
		&R^*_{\dvs}(a^*)(x\dva y)	=(R^*_{\dvs}(a^*)x)\dva y-R^*_{\obas}(L^*_{\oba}(x)a^*)y,\mlabel{eq:ppd7}\\
		&R^*_{\dva}(x)(a^*\dvs b^*)	=(R^*_{\dva}(x)a^*)\dvs b^*
-R^*_{\oba}(L^*_{\obas}(a^*)x)b^*,\mlabel{eq:ppd8}\\
		&L^*_{\vds}(a^*)(x\vda y)=x\vda(L^*_{\vds}(a^*)y)-L^*_{\obas}(R^*_{\oba}(y)a^*)x,\mlabel{eq:ppd9}\\
		&L^*_{\vda}(x)(a^*\vds b^*)	=a^*\vds(L^*_{\vda}(x)b^*)-L^*_{\oba}(R^*_{\obas}(b^*)x)a^*,\mlabel{eq:ppd10}\\
		&(L^*_{\obas}(a^*)x)\dva y-R^*_{\obas}(R^*_{\dva}(x)a^*)y	=L^*_{\obas}(L^*_{\vda}(y)a^*)x-x\vda(R^*_{\obas}(a^*)y),\mlabel{eq:ppd11}\\
		&(L^*_{\oba}(x)a^*)\dvs b^*-R^*_{\oba}(R^*_{\dvs}(a^*)x)b^*	=L^*_{\oba}(L^*_{\vds}(b^*)x)a^*-a^*\vds (R^*_{\oba})(x)b^*),\mlabel{eq:ppd12}\\	
		&L^*_{\obas}(a^*)(x\dva y)	=L^*_{\obas}(a^*)(x\vda y)=L^*_{\obas}(R^*_{\dva}(y)a^*)x+x\vda(L^*_{\obas}(a^*)y),\mlabel{eq:ppd13}\\
		&L^*_{\oba}(x)(a^*\dvs b^*)	=L^*_{\oba}(x)(a^*\vds b^*)=L^*_{\oba}(R^*_{\dvs}(b^*)x)a^*+a^*\vds(L^*_{\oba}(x)b^*),\mlabel{eq:ppd14}\\
		&R^*_{\dvs}(L^*_{\vda}(x)a^*)y-R^*_{\obas}(a^*)x\vda y	=(R^*_{\dvs}(a^*)x)\vda y+R^*_{\dvs}(L^*_{\oba}(x)a^*)y=R^*_{\dvs}(a^*)(x\vda y),\mlabel{eq:ppd15}\\
		&R^*_{\dva}(L^*_{\vds}(a^*)x)b^*-R^*_{\oba}(x)a^*\vds b^*	=(R^*_{\dva}(x)a^*)\vds b^*+R^*_{\dva} (L^*_{\obas}(a^*)x)b^*=R^*_{\dva}(x)(a^*\vds b^*),\mlabel{eq:ppd16}\\
		&(L^*_{\vds}(a^*)x)\vda y-R^*_{\dvs}(R^*_{\oba}(x)a^*)y	=(L^*_{\obas}(a^*)x)\vda y+R^*_{\dvs}(R^*_{\dva}(x)a^*)y
\nonumber\\
&\quad\quad\quad\qquad\qquad\qquad\quad\qquad\qquad=x\vda(R^*_{\dvs}(a^*)y)+L^*_{\obas}(L^*_{\oba}(y)a^*)x,\mlabel{eq:ppd17}\\
		&(L^*_{\vda}(x)a^*)\vds b^*-R^*_{\dva}(R^*_{\obas}(a^*)x)b^*	=(L^*_{\oba}(x)a^*)\vds b^*+R^*_{\dva}(R^*_{\dvs}(a^*)x)b^*\nonumber\\
&\qquad\quad\quad\qquad\qquad\qquad\quad\qquad\qquad
=a^*\vds (R^*_{\dva}(x)b^*)+L^*_{\oba}(L^*_{\obas}(b^*)x)a^*. \mlabel{eq:ppd18}
	\end{flalign}
\end{small}
Moreover, from \meqref{eq:ppd1} we deduce
\begin{eqnarray*}	
	\langle 	R^*_{\oba}(x)(a^*\dvs b^*),y	\rangle& =&	\langle a^*\dvs b^*, y\oba x  	\rangle  =\langle 	L^*_{\oba}(y)(a^*\dvs b^*),x \rangle=\langle  \Delta_{\dva}(y\oba x ),a^*\ot b^*\rangle.\\
	\langle 	R^*_{\oba}(x)(a^*\vds b^*),y	\rangle&=&\langle a^*\vds b^*, y\oba x  	\rangle  =\langle L^*_{\oba}(y)(a^*\vds b^*),x\rangle=\langle  \Delta_{\vda}(y\oba x),a^*\ot b^*\rangle.\\
\langle(R^*_{\oba}(x)a^*)\dvs b^*,y\rangle&=&\langle R^*_{\oba}(x)a^*,R^*_{\dvs}(b^*)y\rangle=\langle a^*,(R^*_{\dvs}(b^*)y)\oba x\rangle\\&=&\langle x,L^*_{\oba}(R^*_{\dvs}(b^*)y)a^*\rangle=\langle (R_{\oba}(x)\ot \id)\Delta_{\dva}(y),a^*\ot b^*\rangle.\\
\langle R^*_{\oba}(L^*_{\vds}(a^*)x)b^*,y\rangle
&=&\langle b^*,y\oba(L^*_{\vds}(a^*)x)\rangle=\langle L^*_{\oba}(y)b^*,L^*_{\vds}(a^*)x\rangle\\&=&\langle x,a^*\vds(L^*_{\oba}(y)b^*)\rangle=\langle (\id \ot L_{\oba}(y))\Delta_{\vda}(x),a^*\ot b^*\rangle.
		\end{eqnarray*}	
Thus,  \meqref{eq:ppd1}  $\Leftrightarrow$   \meqref{eq:ppd14} $\Leftrightarrow$  \meqref{eq:bi12}:  
\begin{equation}	
\Delta_{\dva}(y\oba x )=\Delta_{\vda}(y\oba x)=(R_{\oba}(x)\ot \id)\Delta_{\dva}(y)+(\id \ot L_{\oba}(y))\Delta_{\vda}(x).
\mlabel{eq:bi12}
\end{equation}	
For  \meqref{eq:ppd2}, we get
		\begin{eqnarray*}	
		\langle R^*_{\obas}(a^*)(x\dva y),b^*\rangle	&=&\langle x\dva y,b^*\obas a^* \rangle=\langle	L^*_{\obas}(b^*)(x\dva y),a^*  \rangle=\langle \Delta_{\oba}(x\dva y),b^*\ot a^*\rangle.\\
			\langle  R^*_{\obas}(a^*)(x\vda y),b^* \rangle	&=&\langle x\vda y,b^*\obas a^* \rangle=\langle L^*_{\obas}(b^*)(x\vda y),a^*   \rangle=\langle \Delta_{\oba}(x\vda y),b^*\ot a^* \rangle.\\
		\langle 	(R^*_{\obas}(a^*)x)\dva y,b^*\rangle&=&\langle R^*_{\obas}(a^*)x, R^*_{\dva}(y)b^*\rangle=\langle x,(R^*_{\dva}(y)b^*)\obas a^*\rangle\\&=&\langle a^*, L^*_{\obas}(R^*_{\dva}(y)b^*)x\rangle=\langle (R_{\dva}(y)\ot \id)\Delta_{\oba}(x),b^*\ot a^*\rangle.\\
		\langle R^*_{\obas}(L^*_{\vda}(x)a^*)y,b^*\rangle&=&	\langle y,b^*\obas(L^*_{\vda}(x)a^*)\rangle=\langle L^*_{\obas}(b^*)y,L^*_{\vda}(x)a^*\rangle\\&=&\langle x\vda(L^*_{\obas}(b^*)y),a^*\rangle=\langle (\id \ot L_{\vda}(x))\Delta_{\oba}(y),b^*\ot a^*\rangle.
				\end{eqnarray*}	
Consequently,  \meqref{eq:ppd2} $\Leftrightarrow$   \meqref{eq:ppd13}$\Leftrightarrow$   \meqref{eq:bi14}:
\begin{equation}	
\Delta_{\oba}(x\dva y)= \Delta_{\oba}(x\vda y) =(R_{\dva}(y)\ot \id) \Delta_{\oba}(x)+(\id \ot L_{\vda}(x))\Delta_{\oba}(y).
\mlabel{eq:bi14}
\end{equation}	
For  \meqref{eq:ppd11}, we have
					\begin{eqnarray*}	
						\langle	(L^*_{\obas}(a^*)x)\dva y,b^* \rangle&=&\langle 	\sigma((\id\ot R_{\dva}(y))\Delta_{\oba}(x)),b^*\ot a^*\rangle.\\
						\langle R^*_{\obas}(R^*_{\dva}(x)a^*)y,b^*\rangle&=&\langle (\id\ot R_{\dva}(x))\Delta_{\oba}(y),b^*\ot a^*\rangle.\\	
\langle L^*_{\obas}(L^*_{\vda}(y)a^*)x,b^* \rangle &=&\langle \sigma((L_{\vda}(y)\ot \id)\Delta_{\oba}(x),b^*\ot a^*\rangle.\\
\langle x\vda(R^*_{\obas}(a^*)y),b^*\rangle&=&\langle (L_{\vda}(x)\ot \id)\Delta_{\oba}(y),b^*\ot a^*\rangle.
							\end{eqnarray*}	
Hence \meqref{eq:ppd11} $\Leftrightarrow$  \meqref{eq:bi15}: 
\begin{equation}	
\sigma((\id\ot R_{\dva}(y))\Delta_{\oba}(x))-(\id\ot R_{\dva}(x))\Delta_{\oba}(y)=\sigma((L_{\vda}(y)\ot \id)\Delta_{\oba}(x))-(L_{\vda}(x)\ot \id)\Delta_{\oba}(y).
\mlabel{eq:bi15}
\end{equation}
For  \meqref{eq:ppd12}, we obtain
				\begin{eqnarray*}	
					\langle(L^*_{\oba}(x)a^*)\dvs b^*, y\rangle&=&\langle \sigma((L_{\oba}(x)\ot \id)\Delta_{\dva} (y)),b^*\ot a^*\rangle.\\
						\langle R^*_{\oba}(R^*_{\dvs}(a^*)x)b^*,y \rangle&=&\langle (L_{\oba}(y)\ot \id)\Delta_{\dva} (x),b^*\ot a^*\rangle.\\
\langle L^*_{\oba}(L^*_{\vds}(b^*)x)a^* ,y\rangle&=&\langle (\id\ot R_{\oba}(y))\Delta_{\vda}(x),b^*\ot a^*\rangle.\\
						\langle a^*\vds (R^*_{\oba}(x)b^*),y \rangle&=&\langle \sigma((\id\ot R_{\oba}(x))\Delta_{\vda}(y)),b^*\ot a^*\rangle.
						\end{eqnarray*}	
This yields \meqref{eq:ppd12}  $\Leftrightarrow$   \meqref{eq:bi16}:  
\begin{equation}	
\sigma((L_{\oba}(x)\ot \id)\Delta_{\dva} (y))-(L_{\oba}(y)\ot \id)\Delta_{\dva} (x)=(\id\ot R_{\oba}(y))\Delta_{\vda}(x)-\sigma((\id\ot R_{\oba}(x))\Delta_{\vda}(y)).
\mlabel{eq:bi16}
\end{equation}	

By arguments analogous to those used above,
one verifies that
\begin{enumerate}
\item  \meqref{eq:ppd3} $\Leftrightarrow$    \meqref{eq:ppd16} $\Leftrightarrow$  \meqref{eq:bi2};
\item \meqref{eq:ppd4} $\Leftrightarrow$   \meqref{eq:ppd15}$\Leftrightarrow$  \meqref{eq:bi8};
\item  \meqref{eq:ppd5} $\Leftrightarrow$  \meqref{eq:ppd18}$\Leftrightarrow$  \meqref{eq:bi3}-\meqref{eq:bi4};
\item   \meqref{eq:ppd6} $\Leftrightarrow$    \meqref{eq:ppd17}$\Leftrightarrow$   \meqref{eq:bi5}-\meqref{eq:bi6};
\item  \meqref{eq:ppd7} $\Leftrightarrow$  \meqref{eq:ppd8}$\Leftrightarrow$   \meqref{eq:bi9};
\item  \meqref{eq:ppd9}  $\Leftrightarrow$   \meqref{eq:ppd10} $\Leftrightarrow$ \meqref{eq:bi10}.
\end{enumerate}
\smallskip

\noindent
($\Rightarrow$) If $(A,A^*,-R^*_{\oba},L^*_{\vda},R^*_{\dva},L^*_{\oba},-R^*_{\obas},L^*_{\vds},R^*_{\dvs},L^*_{\obas})$ is a matched pair of \das, then ~\meqref{eq:bi2}--\meqref{eq:bi10} hold, and hence  $(A,\dva,\vda,\Delta_{\dva},\Delta_{\vda})$ is a \dabi.

\smallskip

\noindent
($\Leftarrow$) Suppose that $(A,\dva,\vda,\Delta_{\dva},\Delta_{\vda})$ is a \dabi. 
Indeed, \eqref{eq:bi14} follows by subtracting \eqref{eq:bi8} from \eqref{eq:bi10} and \eqref{eq:bi9} from \eqref{eq:bi2}, together with the identity
$
\Delta_{\oba}=\Delta_{\vda}-\Delta_{\dva}.
$
Similarly, \eqref{eq:bi12} can be obtained.
Moreover, from ~\meqref{eq:bi3}--\meqref{eq:bi6}, we obtain ~\meqref{eq:bi15} and~\meqref{eq:bi16}.
Thus,   \meqref{eq:bi2}--\meqref{eq:bi10} imply that   \meqref{eq:ppd1}--\meqref{eq:ppd18} are satisfied. So $(A,A^*,-R^*_{\oba},L^*_{\vda},R^*_{\dva},L^*_{\oba},-R^*_{\obas},L^*_{\vds},R^*_{\dvs},L^*_{\obas})$ is a matched pair of \das.
\end{proof}

Combining Theorem~\mref{thm:Manin} and Theorem~\mref{thm:matched}, we obtain
\begin{coro}\mlabel{coro:equiv}
	Let $(A,\dva,\vda)$ and $(A^*,\dvs,\vds)$ be \das. Then the following statements are equivalent:
	\begin{enumerate}
		\item$(A,A^*,-R^*_{\oba},L^*_{\vda},R^*_{\dva},L^*_{\oba},-R^*_{\obas},L^*_{\vds},R^*_{\dvs},L^*_{\obas})$ is a matched pair of \das.\mlabel{r1}
		\item There exists a standard Manin triple of \das associated to $(A,\dva,\vda)$ and $(A^*,\dvs,\vds)$.\mlabel{r2}
		\item $(A,\dva,\vda,\Delta_{\dva},\Delta_{\vda})$ is a \dabi, where $\Delta_{\dva}$ is the linear dual of $\dvs$ and 
		$\Delta_{\vda}$ is the linear dual of $\vds$.
		\mlabel{r3}
	\end{enumerate}
\end{coro}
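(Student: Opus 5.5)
The plan is to obtain Corollary~\mref{coro:equiv} directly by chaining the two preceding theorems, with no new computation. The equivalence \meqref{r1}$\Leftrightarrow$\meqref{r2} is exactly Theorem~\mref{thm:Manin}. The equivalence \meqref{r1}$\Leftrightarrow$\meqref{r3} is exactly Theorem~\mref{thm:matched}. Transitivity then gives \meqref{r2}$\Leftrightarrow$\meqref{r3}.

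The one point to handle carefully is the phrase ``there exists a standard Manin triple associated to $(A,\dva,\vda)$ and $(A^*,\dvs,\vds)$'' in \meqref{r2}. Theorem~\mref{thm:Manin} is stated for the specific products $\dashv_d,\vd_d$ on $A\oplus A^*$. So I would first check that any standard Manin triple $((A\oplus A^*,\dashv_d,\vd_d,\omega_d),A,A^*)$ has its products forced into the matched-pair form \meqref{eq:m1}--\meqref{eq:m2}, with the maps $-R^*_{\oba},L^*_{\vda},R^*_{\dva},L^*_{\oba}$ and their $A^*$-counterparts. This is precisely what the ($\Leftarrow$) direction of the proof of Theorem~\mref{thm:Manin} establishes, using nondegeneracy and invariance of $\omega_d$ together with isotropy of $A$ and $A^*$. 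Hence an existing standard Manin triple necessarily coincides with $A\bowtie A^*$ for this 10-tuple, so \meqref{r2}$\Rightarrow$\meqref{r1}.

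Conversely, if \meqref{r1} holds, the matched-pair algebra $A\bowtie A^*$ with $\omega_d$ is such a standard Manin triple by the ($\Rightarrow$) direction of Theorem~\mref{thm:Manin}, so \meqref{r1}$\Rightarrow$\meqref{r2}. For \meqref{r3}, I would note that $\Delta_{\dva},\Delta_{\vda}$ being the linear duals of $\dvs,\vds$ is exactly the hypothesis of Theorem~\mref{thm:matched}. Then the coalgebra axiom in the definition of a \dabi\ corresponds to $(A^*,\dvs,\vds)$ being a \da, by Proposition~\mref{prop:dia-coalg}, and Theorem~\mref{thm:matched} applies verbatim.

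I do not expect a genuine obstacle. The only step needing any care is the existence/uniqueness subtlety in \meqref{r2}, and it is already resolved inside the proof of Theorem~\mref{thm:Manin}.
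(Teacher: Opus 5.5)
Your proposal is correct and follows the paper's argument: the paper obtains the corollary simply by combining Theorem~\ref{thm:Manin} for (1)$\Leftrightarrow$(2) with Theorem~\ref{thm:matched} for (1)$\Leftrightarrow$(3). Your remark that the products of any standard Manin triple on $A\oplus A^*$ are forced into the matched-pair form is exactly what the ($\Leftarrow$) direction of the proof of Theorem~\ref{thm:Manin} establishes, so nothing further is needed.
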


\begin{exam}
 Let $A$ be a two-dimensional vector space with basis $\{e_1, e_2\}$. Define 
the binary operations $\dva$ and $\vda$ by the following Cayley tables.
$$
		\begin{minipage}[t]{0.3\textwidth}
			\centering
			\begin{tabular}{c|cc}
				$\dva$	& $e_1$ &$ e_2$\\
				\hline 	
				$e_1$ & $e_1$ & $0$   \\
				$e_2$ & $e_2$ & $0$  \\
			\end{tabular}
		\end{minipage}
		\hspace{0.02cm}
		\begin{minipage}[t]{0.3\textwidth}
			\centering
				\begin{tabular}{c|cc}
					$\vda$	& $e_1$ & $e_2$  \\
					\hline 	
					$e_1$ & $e_1$ & $0$ \\
					$e_2$ & $0$ & $0$ \\
			\end{tabular}
		\end{minipage}
$$
Moreover, define  coproducts $\Delta_{\dva}$ and $\Delta_{\vda}$ by
$$ \Delta_{\dva}(e_1)=\Delta_{\vda}(e_1)=e_1\ot e_1, \quad\Delta_{\dva}(e_2)=\Delta_{\vda}(e_2)=e_1\ot e_2.$$
It follows by direct computation that $(A, \Delta_{\dva}, \Delta_{\vda})$ is a \dca, and 
 $(A, \dva, \vda, \Delta_{\dva}, \Delta_{\vda})$ is a \dabi.
\end{exam}

\section{Triangular diassociative bialgebras, diassociative Yang-Baxter equations and $\mathcal{O}$-operators}
\mlabel{sec:4}
In this section, we introduce the notion of the diassociative Yang-Baxter equation (DYBE) in a \da, and then show that every symmetric solution of the DYBE give rise to a  \dabi. We give the notion of an $\mathcal{O}$-operator on a \da. 
We prove that any $\mathcal{O}$-operator on a \da
gives rise to a symmetric solution of the DYBE in the semi-direct product \da. The notion of a pre-\da is introduced to interpret $\mathcal{O}$-operators and construct solutions of the DYBE.

\subsection{Diassociative Yang-Baxter equations and triangular diassociative bialgebras}
\begin{defn}
	Let $(A,\dva,\vda)$ be a \da and $r=\sum\limits_{i}u_{i}\otimes v_{i}\in A\otimes A$. Define
	\begin{equation*}
		r_{12}\vda r_{13}=\sum\limits_{i,j}u_i\vda u_j\otimes v_i \otimes v_j,\;
		r_{13}\dva r_{23}=\sum\limits_{i,j}u_i\otimes u_j \otimes v_i \dva v_j,\;
		r_{23}\oba r_{12}=\sum\limits_{i,j}u_i\otimes u_j\oba v_i\otimes v_j.
	\end{equation*}
	The equation $$D(r):=r_{12}\vda r_{13}-r_{13}\dva r_{23}-r_{23}\oba r_{12}=0$$ is called the {\bf diassociative Yang-Baxter equation} (or the {\bf DYBE} in short) in a \da $(A,\dva,\vda)$. If $D(r)=0$, then we say that $r$ is a {\bf solution} of the \dybe in $(A,\dva,\vda)$.
	By definition, one can also write down an explicit presentation of the DYBE as follows:
	\begin{equation}\mlabel{eq:dybe}
		D(r)=\sum\limits_{i,j}u_i\vda u_j\otimes v_i \otimes v_j-u_i\otimes u_j \otimes v_i \dva v_j-u_i\otimes u_j\oba v_i \otimes v_j=0.
	\end{equation}
\end{defn}
\begin{prop}\mlabel{prop:triangular}
	Let $(A,\dva,\vda)$ be a \da and let $r=\sum\limits_{i} u_{i}\otimes v_{i}\in A\otimes A$ be a symmetric solution of the DYBE in $(A,\dva,\vda)$. Define linear maps $\Delta_{\dva},\Delta_{\vda}$  by 
	\begin{eqnarray}
		&&\Delta_{\dva}(x):=(\id \otimes L_{\oba}(x)-R_{\vda}(x)\otimes\id)r,\mlabel{eq:dx1}\\
		&&\Delta_{\vda}(x):=(\id \otimes L_{\dva}(x) +R_{\oba}(x)\otimes\id)(-r),\quad x\in A.\mlabel{eq:dx2}
	\end{eqnarray}
Then $(A,\dva,\vda,\Delta_{\dva},\Delta_{\vda})$ is a \dabi,  called a {\bf triangular diassociative bialgebra}.
\end{prop}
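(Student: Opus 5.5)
Since $(A,\dva,\vda)$ is already a \da, two things remain: $(A,\Delta_{\dva},\Delta_{\vda})$ must be a \dca, and the compatibilities \meqref{eq:bi2}--\meqref{eq:bi10} must hold. I would handle the compatibilities first, because they do not use the DYBE. Substituting \meqref{eq:dx1}--\meqref{eq:dx2} into each identity gives, on both sides, sums of expressions of the form $(\id\ot \phi)(\psi\ot\id)r$ or $\sigma$ of such. The left and right multiplication operators involved combine through the associativity-type relations, which are exactly the statement that $(A,L_{\dva},R_{\dva},L_{\vda},R_{\vda})$ is a representation (Example~\mref{ex:dual}). The adjoint relations \meqref{eq:repr1}--\meqref{eq:repr9} then cancel these terms pairwise.

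The terms involving $\sigma$ in \meqref{eq:bi3}--\meqref{eq:bi6} are where symmetry enters. Since $\sigma(r)=r$, we have $\sigma((\phi\ot\psi)r)=(\psi\ot\phi)r$, which turns every term into the form $(\alpha\ot\beta)r$ with $\alpha,\beta$ words in the multiplication operators. The identities then reduce to operator identities on $A\ot A$ that follow from the diassociative axioms \meqref{eq:dia1}--\meqref{eq:dia3}. A quick check on \meqref{eq:bi9} illustrates the pattern. The term $\Delta_{\dva}(x\dva y)$ produces $r$-terms with $L_{\oba}(x\dva y)$ and $R_{\vda}(x\dva y)$. Against these, the right-hand side produces $R_{\dva}(y)R_{\vda}(x)$, $R_{\dva}(y)\ot L_{\oba}(x)$, and so on, and the mixed terms cancel between the two summands. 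I expect all of this to be routine bookkeeping.

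The main step is the coalgebra axioms \meqref{eq:co1}--\meqref{eq:co3}. The key identity to aim for expresses each difference of iterated coproducts, for instance $(\id\ot\Delta_{\dva})\Delta_{\dva}(x)-(\Delta_{\dva}\ot\id)\Delta_{\dva}(x)$, as a sum of terms $(\Phi_1(x)\ot\Phi_2\ot\Phi_3)D(r)$, possibly composed with a permutation of tensor factors. Here each $\Phi_i$ is a multiplication operator, or the identity, and $D(r)$ is the DYBE element. I would first expand the iterated coproducts as double sums over $i,j$ using $r=\sum u_i\ot v_i$. I would then use the diassociative axioms to move $x$ to an outer position, and use the symmetry $\sum u_i\ot v_i=\sum v_i\ot u_i$ to relabel indices. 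The goal is to match the three summands $r_{12}\vda r_{13}$, $r_{13}\dva r_{23}$, $r_{23}\oba r_{12}$, suitably permuted, against the expansion.

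This is the obstacle. There are seven separate equalities, each involving about eight double-sum terms, and each needs the right operator and permutation. I would first derive one master identity for $(\id\ot\Delta_{\dva})\Delta_{\dva}(x)-(\Delta_{\dva}\ot\id)\Delta_{\dva}(x)$, and then obtain the others by analogous manipulations, checking each time that all non-$D(r)$ remainder terms cancel by \meqref{eq:dia1}--\meqref{eq:dia3}. Once each difference is shown to lie in the image of $D(r)$ under such operators, $D(r)=0$ finishes the proof.

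A possible shortcut would be to use Theorem~\mref{thm:matched} together with Corollary~\mref{coro:equiv}: show that the products on $A^*$ defined through $r$ make $A\oplus A^*$ a Manin triple. However, the direct computation seems more concrete, so I would take that route.
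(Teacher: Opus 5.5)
Your proposal is correct and is essentially the paper's proof. Both are a direct verification in which the compatibility conditions \eqref{eq:bi2}--\eqref{eq:bi10} follow from the diassociative axioms alone (they are linear in $r$), with $\sigma(r)=r$ needed for the $\sigma$-terms in \eqref{eq:bi3}--\eqref{eq:bi6}, and the coalgebra axioms follow from expanding the iterated coproducts and applying $D(r)=0$, together with the symmetry of $r$ to pass between permuted forms of the DYBE. The paper differs only cosmetically: it treats the coalgebra axioms first and writes out only \eqref{eq:co1} and \eqref{eq:bi2}, leaving the remaining identities as analogous.
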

\begin{proof}
We first show that $(A,\Delta_{\dva},\Delta_{\vda})$ is a \dca, that is, $\Delta_{\dva}$ and $\Delta_{\vda}$ satisfy ~\meqref{eq:co1}-\meqref{eq:co3}. We only verify \eqref{eq:co1}, since the proofs of
\eqref{eq:co2} and \eqref{eq:co3} are analogous.
Let $a\in A$. Then
	\begin{eqnarray*}
		&&(\id\ot\Delta_{\dva} )\Delta_{\dva}(a)\\
&\stackrel{\eqref{eq:dx1}}{=}&(\id\ot\Delta_{\dva} )(\sum_{i}u_i\ot (a \vda v_i-a\dva v_i) -u_i\vda a \ot v_i)\\
		&=&\sum_{i}u_i\ot \Delta_{\dva}(a \vda v_i-a\dva v_i)-u_i\vda a\ot \Delta_{\dva}(v_i)\\
		&=&\sum_{i,j}u_i\ot(u_j\ot ((a\vda v_i-a\dva v_i)\vda v_j-(a\vda v_i-a\dva v_i)\dva v_j)\\
		&&-u_j\vda (a\vda v_i-a\dva v_i) \ot v_j)	-u_i\vda a \ot (u_j\ot (v_i\vda v_j-v_i\dva v_j)-u_j\vda v_i \ot v_j)\\
		&=&\sum_{i,j}u_i\ot u_j\ot (a\vda v_i)\vda v_j-u_i\ot u_j \ot (a\dva v_i)\vda v_j-u_i\ot u_j\ot(a\vda v_i)\dva v_j\\
&&+u_i\ot u_j \ot (a\dva v_i)\dva v_j-u_i\ot u_j\vda(a\vda v_i)\ot v_j+u_i\ot u_j\vda(a\dva v_i)\ot v_j\\
&&-u_i\vda a\ot u_j\ot v_i\vda v_j+u_i\vda a\ot u_j\ot v_i\dva v_j+u_i\vda a\ot u_j\vda v_i\ot v_j\\
&\stackrel{\eqref{eq:dia3}}{=}&\sum_{i,j}-u_i\ot u_j\ot(a\vda v_i)\dva v_j+u_i\ot u_j \ot (a\dva v_i)\dva v_j-u_i\ot u_j\vda(a\vda v_i)\ot v_j\\
&&+u_i\ot u_j\vda(a\dva v_i)\ot v_j+(R_{\vda}(a)\ot \id \ot \id)\sigma(123)(-u_j\ot v_i\vda v_j\ot u_i\\
&&+u_j\ot v_i\dva v_j \ot u_i+u_j\vda v_i\ot v_j \ot u_i)\\
&\stackrel{(\eqref{eq:dybe},r=\sigma(r))}{=}&\sum_{i,j}-u_i\ot u_j\ot(a\vda v_i)\dva v_j+u_i\ot u_j \ot (a\dva v_i)\dva v_j-u_i\ot u_j\vda(a\vda v_i)\ot v_j\\
&&+u_i\ot u_j\vda(a\dva v_i)\ot v_j+(R_{\vda}(a)\ot \id \ot \id)\sigma(123)(u_i\ot u_j\ot v_i\dva v_j)\\
&=&\sum_{i,j}-u_i\ot u_j\ot(a\vda v_i)\dva v_j+u_i\ot u_j \ot (a\dva v_i)\dva v_j-u_i\ot u_j\vda(a\vda v_i)\ot v_j\\
&&+u_i\ot u_j\vda(a\dva v_i)\ot v_j+(v_i\dva v_j)\vda a\ot u_i\ot u_j. 
			\end{eqnarray*}
\vsd
			\begin{eqnarray*}
		&&(\id\ot\Delta_{\vda} )\Delta_{\dva}(a)\\
&\stackrel{\eqref{eq:dx1}}{=}&(\id\ot\Delta_{\vda} )(\sum_{i}u_i\ot (a \vda v_i-a\dva v_i) -u_i\vda a \ot v_i)\\
&=&\sum_{i}u_i\ot \Delta_{\vda}(a \vda v_i-a\dva v_i)-u_i \vda a\ot \Delta_{\vda}(v_i)\\
&\stackrel{\eqref{eq:dx2}}{=}&
		\sum_{i,j}u_i\ot((u_j\dva (a\vda v_i-a\dva v_i)-u_j\vda(a\vda v_i-a\dva v_i))\ot v_j\\
&&-u_j\ot (a\vda v_i-a\dva v_i)\dva v_j)-u_i\vda a\ot((u_j\dva v_i-u_j\vda v_i)\ot v_j-u_j\ot v_i\dva v_j)\\
&=&\sum_{i,j}u_i\ot u_j\dva (a\vda v_i)\ot v_j-u_i\ot u_j\dva(a\dva v_i)\ot v_j-u_i\ot u_j\vda(a\vda v_i)\ot v_j\\
&&+u_i\ot u_j\vda(a \dva v_i)\ot v_j-u_i\ot u_j\ot(a\vda v_i)\dva v_j+u_i\ot u_j\ot(a\dva v_i)\dva v_j\\
&&-u_i\vda a\ot u_j\dva v_i\ot v_j+u_i\vda a\ot u_j\vda v_i\ot v_j+u_i\vda a\ot u_j\ot v_i\dva v_j\\
&=&\sum_{i,j}-u_i\ot u_j\vda(a\vda v_i)\ot v_j+u_i\ot u_j\vda(a \dva v_i)\ot v_j-u_i\ot u_j\ot(a\vda v_i)\dva v_j\\
&&+u_i\ot u_j\ot(a\dva v_i)\dva v_j+(R_{\vda}(a)\ot \id \ot \id)(-u_i\ot u_j\dva v_i\ot v_j+u_i\ot u_j\vda v_i\ot v_j\\
&&+u_i\ot u_j\ot v_i\dva v_j)\\
&=&\sum_{i,j}-u_i\ot u_j\vda(a\vda v_i)\ot v_j+u_i\ot u_j\vda(a \dva v_i)\ot v_j-u_i\ot u_j\ot(a\vda v_i)\dva v_j\\
&&+u_i\ot u_j\ot(a\dva v_i)\dva v_j+(R_{\vda}(a)\ot \id \ot \id)(v_i\vda v_j\ot u_i\ot u_j)\\
&=&\sum_{i,j}-u_i\ot u_j\vda(a\vda v_i)\ot v_j+u_i\ot u_j\vda(a \dva v_i)\ot v_j-u_i\ot u_j\ot(a\vda v_i)\dva v_j\\
&&+u_i\ot u_j\ot(a\dva v_i)\dva v_j+(v_i\vda v_j)\vda a\ot u_i\ot u_j\\
&=&\sum_{i,j}-u_i\ot u_j\vda(a\vda v_i)\ot v_j+u_i\ot u_j\vda(a \dva v_i)\ot v_j-u_i\ot u_j\ot(a\vda v_i)\dva v_j\\
&&+u_i\ot u_j\ot(a\dva v_i)\dva v_j+(v_i\dva v_j)\vda a\ot u_i\ot u_j.
		\end{eqnarray*}
Moreover, we have
		\begin{eqnarray*}
		&&(\Delta_{\dva} \ot \id)\Delta_{\dva}(a)\\&=&	(\Delta_{\dva} \ot \id)(\sum_{i}u_i\ot (a \vda v_i-a\dva v_i) -u_i\vda a \ot v_i)\\
		&=&\sum_{i}\Delta_{\dva}(u_i)\ot (a \vda v_i-a\dva v_i) -\Delta_{\dva}(u_i\vda a) \ot v_i\\
		&=&\sum_{i,j}u_j\ot (u_i\vda v_j-u_i\dva v_j)-u_j\vda u_i\ot v_j)\ot (a \vda v_i-a\dva v_i)\\
		&&-(u_j\ot ((u_i\vda a)\vda v_j-(u_i\vda a)\dva v_j)-u_j\vda (u_i\vda a)\ot v_j)\ot v_i\\
		&=&\sum_{i,j}u_j\ot u_i\vda v_j\ot a\vda v_i-u_j\ot u_i \vda v_j\ot a\dva v_i-u_j\ot u_i\dva v_j\ot a\vda v_i\\
		&&+u_j\ot u_i\dva v_j\ot a\dva v_i-u_j\vda u_i\ot v_j\ot a\vda v_i+u_j\vda u_i\ot v_j \ot a\dva v_i\\
		&&-u_j\ot (u_i\vda a)\vda v_j\ot v_i+u_j\ot (u_i\vda a)\dva v_j\ot v_i+u_j\vda(u_i\vda a)\ot v_j\ot v_i\\
		&=&\sum_{i,j}(\id\ot \id \ot L_{\vda}(a))(u_j\ot u_i\vda v_j\ot v_i-u_j\ot u_i\dva v_j\ot  v_i-u_j\vda u_i\ot v_j\ot v_i)\\
		&&+(\id\ot \id \ot L_{\dva}(a))(-u_j\ot  u_i \vda v_j\ot  v_i+u_j\ot u_i\dva v_j\ot  v_i+u_j\vda u_i\ot v_j \ot  v_i)\\
		&&-u_j\ot (u_i\vda a)\vda v_j\ot v_i+u_j\ot (u_i\vda a)\dva v_j\ot v_i+u_j\vda(u_i\vda a)\ot v_j\ot v_i\\
		&=&\sum_{i,j}(\id\ot \id \ot L_{\vda}(a))(-u_i\ot u_j \ot v_i\dva v_j)+(\id\ot \id \ot L_{\dva}(a))(u_i\ot u_j\ot v_i\dva v_j)\\
		&&-u_j\ot (u_i\vda a)\vda v_j\ot v_i+u_j\ot (u_i\vda a)\dva v_j\ot v_i+u_j\vda(u_i\vda a)\ot v_j\ot v_i\\
		&=&\sum_{i,j}-u_i\ot u_j \ot a\vda (v_i\dva v_j)+u_i\ot u_j \ot a\dva(v_i\dva v_j)-u_j\ot (u_i\vda a)\vda v_j\ot v_i\\
		&&+u_j\ot (u_i\vda a)\dva v_j\ot v_i+u_j\vda(u_i\vda a)\ot v_j\ot v_i\\
		&=&\sum_{i,j}-u_i\ot u_j \ot (a\vda v_i)\dva v_j+u_i\ot u_j \ot (a\dva v_i)\dva v_j-u_i\ot u_j\vda (a\vda v_i)\ot v_j\\
		&&+u_i\ot u_j\vda (a\dva v_i)\ot v_j+(v_i\dva v_j)\vda a\ot u_i\ot u_j.
	\end{eqnarray*}
	Thus,   \meqref{eq:co1} holds. 

To verify ~\eqref{eq:bi2}--\eqref{eq:bi10}, it suffices to prove ~\eqref{eq:bi2}, as the others follow analogously. For all $x,y \in A$, we have
	\begin{align*}
		\Delta_{\vda} (x\dva y) &= \sum_{i}(u_i \dva (x\dva y) - u_i \vda (x\dva y))\ot v_i - u_i\ot (x\dva y)\dva v_i \\
		&= \sum_{i} u_i \dva (x\dva y)\ot v_i - u_i \vda (x\dva y)\ot v_i - u_i\ot (x\dva y) \dva v_i.
	\end{align*}
	\begin{eqnarray*}
		&&(\id\ot L_{\dva} (x))\Delta_{\vda}(y) + ((R_{\dva} - R_{\vda})(y)\ot \id)\Delta_{\vda} (x)\\
		&=& \sum_{i} (\id \ot L_{\dva} (x))((u_i\dva y - u_i\vda y)\ot v_i - u_i \ot y \dva v_i)\\
		&&+ ((R_{\dva} -R_{\vda})(y)\ot \id )((u_i \dva x - u_i\vda x)\ot v_i - u_i \ot x \dva v_i)\\
		&=& \sum_{i}u_i \dva y\ot x\dva v_i  - u_i\vda y \ot x \dva v_i -u_i \ot x \dva (y\dva v_i)+ (u_i \dva x)\dva y\ot v_i\\
		&&- (u_i\vda x)\dva y\ot v_i - u_i \dva y\ot x \dva v_i - (u_i \dva x)\vda y\ot v_i + (u_i \vda x )\vda y\ot v_i\\
&&+ u_i \vda y\ot x \dva v_i\\
		&=& \sum_{i} -u_i\ot x \dva (y\dva v_i) + (u_i \dva x)\dva y\ot v_i - (u_i \vda x)\dva y\ot v_i\\
		&=& \sum_{i} - u_i\ot (x\dva y) \dva v_i+u_i \dva (x\dva y)\ot v_i - u_i \vda (x\dva y)\ot v_i .
		\end{eqnarray*}	
Similarly, we get
		\begin{eqnarray*}
		&&(\id\ot L_{\dva} (x))(\Delta_{\vda} - \Delta_{\dva} )(y)+ (R_{\dva}(y)\ot \id )\Delta_{\vda} (x)\\
		&=& \sum_{i} - u_i\ot (x\dva y) \dva v_i+u_i \dva (x\dva y)\ot v_i - u_i \vda (x\dva y)\ot v_i .
		\end{eqnarray*}
Hence,  \meqref{eq:bi2}  holds.
\end{proof}
\subsection{Diassociative Yang-Baxter equations and $\mathcal{O}$-operators of diassociative algebras}
\begin{defn}\mlabel{defn:ope}
	Let $(A,\dva,\vda)$ be a \da and  $(V,\ell_{\dva}, r_{\dva}, \ell_{\vda}, r_{\vda})$ be a representation of $(A,\dva,\vda)$. A linear map
	$T: V\to A$ is called an {\bf $\mathcal{O}$-operator} of $(A,\dva,\vda)$  associated  to  $(V,\ell_{\dva}, r_{\dva}, \ell_{\vda}, r_{\vda})$, if for all $	u,v\in V$, $T$ satisfies
	\begin{equation}\mlabel{eq:oper}
		T(u)\dva T(v)=T(\ell_{\dva}(T(u))v+r_{\dva}(T(v))u),\,
		T(u)\vda T(v)=T(\ell_{\vda}(T(u))v+r_{\vda}(T(v))u).
	\end{equation}
\end{defn}
For a vector space $V$, the isomorphism $V\otimes V\cong {\rm Hom}(V^*,V)$ identifies an $r\in V\otimes V$ with a linear map $r:V^*\to V$. Explicitly,  if $r=\sum\limits_{i}u_{i}\otimes v_{i}$, then the corresponding map $r$ is given by
\begin{equation}\mlabel{eq:identify}
	r:V^{*}\rightarrow V,\;\; r(u^{*})=\sum_{i}\langle u^{*}, u_{i}\rangle v_{i},\, u^{*}\in V^{*}.
\end{equation}
\begin{prop}
	Let $(A,\dva,\vda)$ be a \da 
	and $r=\sum\limits_{i} u_{i}\otimes v_{i}\in A\otimes A$ be symmetric.
	Then $r$ is a solution of the DYBE in $(A,\dva,\vda)$ if and only if
		${r}$ is an $\mathcal{O}$-operator of $(A,\dva,\vda)$ associated to
		$(A^*, -R^*_{\oba},L^*_{\vda},R^*_{\dva},L^*_{\oba})$.
\end{prop}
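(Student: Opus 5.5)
We identify $r$ with the map $r:A^*\to A$ via \meqref{eq:identify}, so $r(a^*)=\sum_i\langle a^*,u_i\rangle v_i$. Because $r$ is symmetric, we also have $r(a^*)=\sum_i\langle a^*,v_i\rangle u_i$. The plan is to pair each of the two $\mathcal{O}$-operator identities in \meqref{eq:oper} with an arbitrary $c^*\in A^*$, for arbitrary $a^*,b^*\in A^*$. The representation maps are $\ell_{\dva}=-R^*_{\oba}$, $r_{\dva}=L^*_{\vda}$, $\ell_{\vda}=R^*_{\dva}$ and $r_{\vda}=L^*_{\oba}$. Each paired identity will then be rewritten as the evaluation of a $3$-tensor built from $r$ on $a^*\ot b^*\ot c^*$ (in some order). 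The goal is to show that the $\dva$-identity is exactly $\langle D(r), \text{(permuted) } a^*\ot b^*\ot c^*\rangle=0$, and that the $\vda$-identity is equivalent to the same condition, possibly after a permutation of tensor factors and the use of symmetry.

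\emph{First identity.} We compute three terms:
\[
\langle r(a^*)\dva r(b^*),c^*\rangle=\sum_{i,j}\langle a^*,u_i\rangle\langle b^*,u_j\rangle\langle c^*,v_i\dva v_j\rangle=\langle r_{13}\dva r_{23},a^*\ot b^*\ot c^*\rangle ,
\]
\[
\langle r(-R^*_{\oba}(r(a^*))b^*),c^*\rangle=-\sum_j\langle c^*,u_j\rangle\langle b^*,v_j\oba r(a^*)\rangle,
\]
and, writing $r(a^*)$ through the second-leg expansion,
\[
\langle r(L^*_{\vda}(r(b^*))a^*),c^*\rangle=\sum_j\langle c^*,u_j\rangle\langle a^*,r(b^*)\vda v_j\rangle .
\]
Choosing the expansions of $r(a^*)$ and $r(b^*)$ appropriately, which symmetry permits, the second term becomes $-\langle r_{23}\oba r_{12}\rangle$ and the third becomes $\langle r_{12}\vda r_{13}\rangle$, each evaluated on a common permutation of $a^*\ot b^*\ot c^*$, such as $c^*\ot b^*\ot a^*$ or $a^*\ot b^*\ot c^*$ after relabelling. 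We must then track which permutation $\tau$ of the three legs appears in each term. Symmetry of $r$ lets us swap the two legs of each copy of $r$ independently, and this freedom should allow all three terms to be read against the same ordering. The first $\mathcal{O}$-operator identity is then equivalent to $\langle D(r),\tau(a^*\ot b^*\ot c^*)\rangle=0$ for all $a^*,b^*,c^*$, that is, to $D(r)=0$.

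\emph{Second identity.} We repeat the computation for $r(a^*)\vda r(b^*)=r(R^*_{\dva}(r(a^*))b^*+L^*_{\oba}(r(b^*))a^*)$. This yields a combination of $3$-tensors of the types $\sum u_i\ot u_j\ot v_i\vda v_j$, $\sum u_j\ot v_j\dva u_i\ot v_i$, and similar ones. The task is to show that this combination equals $D(r)$ up to a leg permutation and sign, once symmetry of $r$ and the diassociative axioms \meqref{eq:dia1}--\meqref{eq:dia3} are taken into account. Expanding $\oba=\vda-\dva$ should make the three tensors regroup into $r_{12}\vda r_{13}-r_{13}\dva r_{23}-r_{23}\oba r_{12}$, with the legs permuted. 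Consequently $D(r)=0$ implies the second identity, and the converse is already provided by the first identity.

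The main obstacle is the bookkeeping in this second identity. In the first identity the three terms match $D(r)$ essentially term by term. In the second, the correspondence requires choosing, for each term, which leg expansion of $r$ to use, and then confirming that the resulting permutation is the same for all three terms. If a direct match fails, the fallback is to use the first identity, which is equivalent to $D(r)=0$, together with symmetry of $r$, to rewrite the $\vda$-expression. As a sanity check, this second equivalence is consistent with Proposition~\mref{prop:triangular}, where symmetric solutions of the DYBE were already shown to produce a diassociative coalgebra.
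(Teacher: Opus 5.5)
Your approach is the paper's: pair each $\mathcal{O}$-operator identity with $c^*$ and read it as a $3$-tensor evaluated on $a^*\otimes b^*\otimes c^*$. For the first identity, the three terms give $r_{13}\dva r_{23}$, $-r_{23}\oba r_{12}$ and $r_{12}\vda r_{13}$ with no permutation of the arguments at all, so the hedge about $c^*\otimes b^*\otimes a^*$ is unnecessary. For the second identity, which the paper also only asserts ``similarly'', the same computation gives $\langle r_{13}\vda r_{23}-r_{23}\dva r_{12}-r_{12}\oba r_{13},\,a^*\otimes b^*\otimes c^*\rangle$. By symmetry of $r$ this tensor is, term by term, the image of $D(r)$ under the cyclic leg permutation $x\otimes y\otimes z\mapsto y\otimes z\otimes x$, so neither the diassociative axioms nor your fallback are needed.
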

\begin{proof}
	By Example~\mref{ex:dual}, $(A^*,-R^*_{\oba},L^*_{\vda},R^*_{\dva},L^*_{\oba})$ is a representation of the \da  $(A,\dva,\vda)$.
	For all $a^*,b^*, c^*\in A^*$, by ~\meqref{eq:identify}, we have
	$$
	\sqmon{r(a^*)\vda r(b^*),c^*}=\sqmon{\sum_i\sqmon{a^*,u_i}v_i\dva\sum_j \sqmon{b^*,u_j}v_j,c^*}
	=\sqmon{\sum_{i,j}u_i\ot u_j\ot v_i\dva v_j, a^*\ot b^*\ot c^*}.$$
Moreover, by ~\meqref{eq:rho-star}, we obtain
	\begin{align*}
		\sqmon{r(-{R}^{*}_{\oba}(r(a^*))b^*),c^*}&=\sqmon{\sum_i\sqmon{-{R}^{*}_{\oba}(r(a^*))b^*,u_i}v_i,c^*}
		=\sqmon{\sum_i\sqmon{b^*,-{R}_{\oba}(r(a^*))u_i}v_i,c^*}\\
		&=\sum_i\sqmon{b^*,-u_i\oba r(a^*)}\sqmon{v_i,c^*}
		=\sum_i\sqmon{b^*, -u_i\oba \sum_{j}\sqmon{a^*,u_j}v_j}\sqmon{v_i,c^*}\\
		&=\sum_{i,j}\sqmon{a^*,u_j}\sqmon{b^*, -u_i\oba v_j}\sqmon{v_i,c^*}
		=\sqmon{\sum_{i,j} u_j\ot (-u_i\oba v_j)\ot v_i, a^*\ot b^*\ot c^*}\\
	&=-\sqmon{\sum_{i,j} u_i\ot u_j\oba v_i\ot v_j, a^*\ot b^*\ot c^*}.\\
		\sqmon{r({L}^{*}_{\vda}(r(b^*))a^*),c^*}&=\sqmon{\sum_i\sqmon{L^*_{\vda}(r(b^*))a^*,u_i}v_i,c^*}
		=\sum_i\sqmon{a^*,r(b^*)\vda u_i}\sqmon{v_i,c^*}\\
		&=\sum_i\sqmon{a^*, \sum_{j}\sqmon{b^*,u_j}v_j\vda u_i}\sqmon{v_i,c^*}
		=\sqmon{\sum_{i,j} v_j\vda u_i\ot u_j\ot v_i, a^*\ot b^*\ot c^*}\\
		&\stackrel{(r=\sigma(r), i\leftrightarrow j)}{=}\sqmon{\sum_{i,j} u_i\vda u_j\ot v_i\ot v_j, a^*\ot b^*\ot c^*}.
	\end{align*}
Then by ~\meqref{eq:dybe}, $r(a^*)\dva r(b^*)=r((-{R}^{*}_{\oba})(r(a^*))b^*+{L}^{*}_{\vda}(r(b^*))a^*)$ if and only if $D(r)=0$.
Similarly, we can show that $r(a^*)\vda r(b^*)=r(R^{*}_{\dva}(r(a^*))b^*+L^{*}_{\oba}(r(b^*))a^*)$ if and only if $D(r)=0$. Thus, the statement holds.
\end{proof}
We next prove that $\mathcal{O}$-operators provide numerous solutions of the DYBE in semi-direct
product \das, which, via Proposition~\mref{prop:triangular}, induce \dabis. 
\begin{theorem}\mlabel{thm:semi-dybe}
 Let $(A,\dva,\vda)$ be a \da.	Let $(V,\ell_{\dva},r_{\dva},\ell_{\vda},r_{\vda})$ be a representation of  $(A,\dva,\vda)$. Let $T:V\rightarrow A$ be a linear map identified as an element in 
	$ (A\ltimes_{r^*_{\dva},\ell^*_{\vda}-\ell^*_{\dva}}^{r^*_{\dva}-r^*_{\vda},\ell^*_{\vda}}V^*)\otimes
	(A\ltimes_{r^*_{\dva},\ell^*_{\vda}-\ell^*_{\dva}}^{r^*_{\dva}-r^*_{\vda},\ell^*_{\vda}}V^*).$  
	Then $r:=T+\sigma(T)$ is a symmetric solution of  the DYBE in the semi-direct product  \da $A\ltimes_{r^*_{\dva},\ell^*_{\vda}-\ell^*_{\dva}}^{r^*_{\dva}-r^*_{\vda},\ell^*_{\vda}}V^*$ defined by Definition~\mref{def:semi}, if and only if $T$ is an $\mathcal{O}$-operator of  $(A,\dva,\vda)$ associated to $(V,\ell_{\dva},r_{\dva},\ell_{\vda},r_{\vda})$.
\end{theorem}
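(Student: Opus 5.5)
The plan is to compute $D(r)$ directly in the semi-direct product $\hat A:=A\ltimes_{r^*_{\dva},\ell^*_{\vda}-\ell^*_{\dva}}^{r^*_{\dva}-r^*_{\vda},\ell^*_{\vda}}V^*$ and then sort it by tensor components.

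First I identify the module structure. By Proposition~\mref{prop:dual-rep}, the structure maps on $V^*$ are
$$\ell_{\dv}=-r^*_{\oba},\quad r_{\dv}=\ell^*_{\vda},\quad \ell_{\vd}=r^*_{\dva},\quad r_{\vd}=\ell^*_{\oba},$$
where $r_{\oba}:=r_{\vda}-r_{\dva}$ and $\ell_{\oba}:=\ell_{\vda}-\ell_{\dva}$. So $\hat A$ really is a \da (Definition~\mref{def:semi}), and in it any product of two elements of $V^*$ is zero.

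Next I fix a basis $\{e_1,\dots,e_n\}$ of $V$ with dual basis $\{e_1^*,\dots,e_n^*\}$. The map $T$ is then the tensor $T=\sum_k e_k^*\otimes T(e_k)\in V^*\otimes A$. This gives
$$r=\sum_k \bigl(e_k^*\otimes T(e_k)+T(e_k)\otimes e_k^*\bigr),$$
which is symmetric by construction.

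Substituting $r$ into~\meqref{eq:dybe} and expanding, every term has at least one leg in $V^*$. A term with two $V^*$ factors multiplied together vanishes, and each leg of $r$ carries exactly one $V^*$ factor. Hence $D(r)$ lies in
$$(V^*\otimes V^*\otimes A)\oplus(V^*\otimes A\otimes V^*)\oplus(A\otimes V^*\otimes V^*),$$
and $D(r)=0$ if and only if each of these three components vanishes.

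I then test each component against $u\otimes v\otimes a^*$, in the appropriate order, with $u,v\in V$ and $a^*\in A^*$. The module actions are moved onto $V$ using~\meqref{eq:rho-star}, and the sum over $k$ is collapsed by $\sum_k\langle e_k^*,w\rangle T(e_k)=T(w)$. I expect the $V^*\otimes V^*\otimes A$ component to pair to
$$\bigl\langle T(u)\dva T(v)-T(\ell_{\dva}(T(u))v+r_{\dva}(T(v))u),\,a^*\bigr\rangle.$$
This comes from the term $r_{13}\dva r_{23}$ together with the dual-action terms of $r_{12}\vda r_{13}$ and $r_{23}\oba r_{12}$. The $A\otimes V^*\otimes V^*$ component should likewise give the $\vda$-identity of~\meqref{eq:oper}, coming from $r_{12}\vda r_{13}$. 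The middle component should give a signed combination of the two identities, essentially the $\oba$-version, i.e. their difference. This mirrors the computation in the proposition identifying symmetric DYBE solutions with $\mathcal{O}$-operators associated to the dual representation.

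With these three identifications, both directions of the theorem follow at once. If $T$ satisfies~\meqref{eq:oper}, all three components vanish, so $D(r)=0$. Conversely, if $D(r)=0$, the first two components force both identities in~\meqref{eq:oper}, so $T$ is an $\mathcal{O}$-operator.

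The main obstacle is the sign bookkeeping. The operation $\oba=\vda-\dva$ in $r_{23}\oba r_{12}$, combined with the actions $-r^*_{\oba}$ and $\ell^*_{\oba}$ on $V^*$, produces many terms. For example, $e^*_k\oba x$ expands to $\ell^*_{\oba}(x)e_k^*+r^*_{\oba}(x)e_k^*$, and these must be regrouped so that the "$\ell^*_{\dva}$" and "$r^*_{\vda}$" pieces cancel and the remaining pieces recombine into $\ell_{\dva}$, $r_{\dva}$, $\ell_{\vda}$, $r_{\vda}$ applied to $V$. I would organise the computation leg by leg, writing each of the three summands of $D(r)$ as a sum of eight pieces before discarding the zero ones. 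I would also use the symmetry of $r$ to reduce the middle component to a combination of the outer two, so that only two genuine verifications remain.
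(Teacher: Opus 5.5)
Your plan is the paper's proof. Writing $T=\sum_k e_k^*\otimes T(e_k)$, you expand $D(r)$ in the semi-direct product and split it into its $V^*\otimes V^*\otimes A$, $V^*\otimes A\otimes V^*$ and $A\otimes V^*\otimes V^*$ components (the paper's \eqref{eq:t1}--\eqref{eq:t3}). Collapsing the sums then gives, respectively, the $\dva$-identity, the difference of the two identities, and the $\vda$-identity of \eqref{eq:oper}, as you predict.

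The only inaccuracies are in your side remarks. In the semi-direct product, $e_k^*\oba x=-\ell^*_{\dva}(x)e_k^*$ and $x\oba e_k^*=r^*_{\vda}(x)e_k^*$, so it is the $\ell^*_{\vda}$ and $r^*_{\dva}$ pieces that cancel, not $\ell^*_{\dva}$ and $r^*_{\vda}$. Also, rather than appealing to the symmetry of $r$, just compute the middle component directly (it has three terms). It pairs with $u\otimes a^*\otimes v$ to $\langle a^*,T(\ell_{\oba}(T(v))u+r_{\oba}(T(u))v)-T(v)\oba T(u)\rangle$, where $\ell_{\oba}=\ell_{\vda}-\ell_{\dva}$ and $r_{\oba}=r_{\vda}-r_{\dva}$.
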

\begin{proof}
Let $\{e_1,\ldots,e_n\}$ be a basis of $V$ and let
$\{e_1^*,\ldots,e_n^*\}$ be the corresponding dual basis of $V^*$.	 Then we write  $T=\sum_{i=1}^{n}e_i^*\otimes T(e_i)$. Thus, $r=T+\sigma(T)=\sum_{i=1}^{n}(e_i^* \otimes T(e_i)+T(e_i)\otimes e_i^*)$. Denote the two products on the semi-direct product algebra by
$\dashv'$ and $\vdash'$. So we obtain
	\begin{align*}
		r_{12}\vd' r_{13}
		&=\sum_{i,j}e_i^*\vd'e_j^*\ot T(e_i)\ot T(e_j)+T(e_i)\vd' e^*_j\ot e_i^*\ot T(e_j)\\
		&\quad+e^*_i\vd' T(e_j)\ot T(e_i)\ot e^*_j+T(e_i)\vd' T(e_j)\ot e^*_i\ot e^*_j\\
		&=\sum_{i,j}r_{\dva}^*(T(e_i)) e^*_j\ot e_i^*\ot T(e_j)
		+(\ell^*_{\vda}-\ell^*_{\dva})(T(e_j))e^*_i\ot T(e_i)\ot e^*_j\\
&\quad+T(e_i)\vda T(e_j)\ot e^*_i\ot e^*_j .
\end{align*}
Similarly, we  get
\begin{align*}
		r_{13}\dv' r_{23}
&=\sum_{i,j}e_i^*\ot e_j^*\ot T(e_i)\dva T(e_j)+T(e_i)\ot e_j^* \ot \ell^*_{\vda}(T(e_j))e^*_i\\
&\quad+e^*_i\ot T(e_j)\ot(r^*_{\dva}-r^*_{\vda} ) (T(e_i)) e^*_j.\\
		r_{23}\vd' r_{12}
		&=\sum_{i,j}e_j^*\ot (\ell^*_{\vda}-\ell^*_{\dva})(T(e_j))e_i^* \ot T(e_i)+e^*_j\ot T(e_i)\vda T(e_j)\ot e^*_i\\
&\quad+T(e_j)\ot r^*_{\dva}(T(e_i)) e^*_j\ot e^*_i.\\
		r_{23}\dv' r_{12}
		&=\sum_{i,j}e_j^*\ot \ell^*_{\vda}(T(e_j))e_i^* \ot T(e_i)+e^*_j\ot T(e_i)\dva T(e_j)\ot e^*_i\\
&\quad+T(e_j)\ot (r^*_{\dva}-r^*_{\vda})(T(e_i)) e^*_j\ot e^*_i.
	\end{align*}
	Then $r$ is a symmetric solution of  the DYBE in   $A\ltimes_{\ell^*_{\vda},\ell^*_{\vda}-\ell^*_{\dva}}^{r^*_{\dva}-r^*_{\vda},r^*_{\dva}}V^*$ , if and only if
	\begin{align*}
		&\sum_{i,j}r_{\dva}^*(T(e_i)) e^*_j\ot e_i^*\ot T(e_j)
		+(\ell^*_{\vda}-\ell^*_{\dva})(T(e_j))e^*_i\ot T(e_i)\ot e^*_j+T(e_i)\vda T(e_j)\ot e^*_i\ot e^*_j \\
		&\quad-e_i^*\ot e_j^*\ot T(e_i)\dva T(e_j)-T(e_i)\ot e_j^* \ot \ell^*_{\vda}(T(e_j))e^*_i
		-e^*_i\ot T(e_j)\ot(r^*_{\dva}-r^*_{\vda} ) (T(e_i)) e^*_j\\
		&\quad-e_j^*\ot (\ell^*_{\vda}-\ell^*_{\dva})(T(e_j))e_i^* \ot T(e_i)-e^*_j\ot T(e_i)\vda T(e_j)\ot e^*_i-T(e_j)\ot r^*_{\dva}(T(e_i)) e^*_j\ot e^*_i\\
		&\quad +e_j^*\ot \ell^*_{\vda}(T(e_j))e_i^* \ot T(e_i)+e^*_j\ot T(e_i)\dva T(e_j)\ot e^*_i+T(e_j)\ot (r^*_{\dva}-r^*_{\vda})(T(e_i)) e^*_j\ot e^*_i=0,
	\end{align*}
	which is equivalent to
	\begin{align}
		&\sum_{i,j}	r_{\dva}^*(T(e_i)) e^*_j\ot e_i^*\ot T(e_j)-e_i^*\ot e_j^*\ot T(e_i)\dva T(e_j)\nonumber\\
&\quad -e_j^*\ot (\ell^*_{\vda}-\ell^*_{\dva})(T(e_j))e_i^* \ot T(e_i)+e_j^*\ot \ell^*_{\vda}(T(e_j))e_i^* \ot T(e_i)=0, \mlabel{eq:t1}\\
		&\sum_{i,j}	(\ell^*_{\vda}-\ell^*_{\dva})(T(e_j))e^*_i\ot T(e_i)\ot e^*_j-e^*_i\ot T(e_j)\ot(r^*_{\dva}-r^*_{\vda} ) (T(e_i)) e^*_j\nonumber\\
&\quad -e^*_j\ot T(e_i)\vda T(e_j)\ot e^*_i+e^*_j\ot T(e_i)\dva T(e_j)\ot e^*_i=0,\mlabel{eq:t2}\\
		&\sum_{i,j}	T(e_i)\vda T(e_j)\ot e^*_i\ot e^*_j -T(e_i)\ot e_j^* \ot \ell^*_{\vda}(T(e_j))e^*_i\nonumber\\
&\quad -T(e_j)\ot r^*_{\dva}(T(e_i)) e^*_j\ot e_i^*+T(e_j)\ot (r^*_{\dva}-r^*_{\vda})(T(e_i)) e^*_j\ot e^*_i=0.\mlabel{eq:t3}
	\end{align}
	Note that  \begin{align*}
		r_{\dva}^*(T(e_i)) e^*_j&=\sum_k\langle r_{\dva}^*(T(e_i)) e^*_j,e_k\rangle e_k^*,\qquad (\ell^*_{\vda}-\ell^*_{\dva})(T(e_j))e_i^*=\sum_k\langle  (\ell^*_{\vda}-\ell^*_{\dva})(T(e_j)e_i^*,e_k\rangle e_k^*,\\
		\ell^*_{\vda}(T(e_j))e_i^*&=\sum_k\langle \ell^*_{\vda}(T(e_j))e_i^*,e_k \rangle e_k^*.
	\end{align*}
From the above,  we obtain
	\begin{align*}
		\sum_{i,j}r_{\dva}^*(T(e_i)) e^*_j\ot e_i^*\ot T(e_j)&=\sum_{i,j,k}\langle r_{\dva}^*(T(e_i)) e^*_j,e_k\rangle e_k^*\ot e_i^*\ot T(e_j)\\
		&=\sum_{i,k}e_k^*\ot e_i^*\ot T(\sum_{j}\langle  e^*_j,r_{\dva}(T(e_i))e_k\rangle e_j)\\
&=\sum_{i,k}e_k^*\ot e_i^*\ot T(r_{\dva}(T(e_i))e_k)\\
&\stackrel{(k\leftrightarrow i,  i\leftrightarrow j)}{=}\sum_{i,j}e_i^*\ot e_j^*\ot T(r_{\dva}(T(e_j))e_i).
\end{align*}
Similarly, 
\begin{align*}
		\sum_{i,j}-e_j^*\ot (\ell^*_{\vda}-\ell^*_{\dva})(T(e_j))e_i^* \ot T(e_i)
&=\sum_{i,j}-e_i^*\ot e^*_j\ot T((\ell_{\vda}-\ell_{\dva})(T(e_i))e_j),\\
		\sum_{i,j}e_j^*\ot \ell^*_{\vda}(T(e_j))e_i^* \ot T(e_i)
&=\sum_{i,j}e_i^*\ot  e_j^*\ot T(\ell_{\vda}(T(e_i))e_j ).
	\end{align*}
	Hence  \meqref{eq:t1} holds if and only if 
	$$T(e_i)\dva  T(e_j)=T(\ell_{\dva}(T(e_i)) e_j+r_{\dva}(T(e_j))e_i).$$
Similarly, one can show that
\meqref{eq:t2} is equivalent to
	$$T(e_i)\vda  T(e_j)-T(\ell_{\vda}(T(e_i)) e_j+r_{\vda}(T(e_j))e_i)=T(e_i)\dva  T(e_j)-T(\ell_{\dva}(T(e_i)) e_j+r_{\dva}(T(e_j))e_i).$$
Moreover,  \meqref{eq:t3} holds if and only if 
	$$T(e_i)\vda  T(e_j)=T(\ell_{\vda}(T(e_i)) e_j+r_{\vda}(T(e_j))e_i).\qedhere$$
\end{proof}
\subsection{Pre-diassociative algebras and $\mathcal{O}$-operators}
\begin{defn}
A {\bf pre-diassociative algebra} is a quintuple $(A,\lhd_1, \lhd_2,\rhd_1, \rhd_2)$, where $A$ is a vector space and $\lhd_1, \lhd_2,\rhd_1, \rhd_2: A\ot A\to A$ are binary operations satisfying the following identities for all $x, y, z\in A$:  
\begin{align}
x\lhd_1 (y\lhd_1 z) &= x \lhd_1(y\rhd_1 z)=(x\dva y)\lhd_1 z,\mlabel{eq:pnva1}\\
x\lhd_1  (y\lhd_2 z) &= x \lhd_1(y\rhd_2 z)=(x\lhd_1 y)\lhd_2 z,\mlabel{eq:pnva2}\\
x\lhd_2 (y\dva z) &= x \lhd_2(y\vda z)=(x\lhd_2 y)\lhd_2 z,\mlabel{eq:pnva3}\\
(x\vda y)\lhd_1 z &=x \rhd_1(y\lhd_1 z),\mlabel{eq:pnva4}\\
(x\rhd_1 y)\lhd_2 z &=x \rhd_1(y\lhd_2 z),\mlabel{eq:pnva5}\\
(x\rhd_2 y)\lhd_2 z&=x \rhd_2(y\dva z),\mlabel{eq:pnva6}\\
(x\dva y)\rhd_1 z &=(x\vda y) \rhd_1 z= x\rhd_1 (y\rhd_1 z),\mlabel{eq:pnva7}\\
(x\lhd_1 y)\rhd_2 z &=(x \rhd_1 y)\rhd_2 z=x\rhd_1(y\rhd_2 z),\mlabel{eq:pnva8}\\
(x\lhd_2 y)\rhd_2 z&=(x \rhd_2 y)\rhd_2 z=x\rhd_2 (y\vda z),\mlabel{eq:pnva9}
\end{align}
where
\begin{align}
x\dva y&:=x\lhd_1 y+ x\lhd_2 y, \mlabel{eq:pnvab1}\\
x\vda y&:=x\rhd_1 y+ x\rhd_2 y.\mlabel{eq:pnvab2}
\end{align}
\end{defn}
\begin{exam}Let $(A,\prec,\succ)$ be a dendriform algebra.
If we set $\lhd_1:=\succ, \lhd_2:=\prec$, and $\rhd_1=\rhd_2=0$, then $(A,\lhd_1,\lhd_2,\rhd_1,\rhd_2)$ is a pre-diassociative algebra. Similarly, if we let $\lhd_1=\lhd_2=0$,  $\rhd_1:=\succ$ and $\rhd_2:=\prec$,  then $(A,\lhd_1,\lhd_2,\rhd_1,\rhd_2)$  is also a pre-diassociative algebra. Conversely, a pre-diassociative algebra $(A,\lhd_1,\lhd_2,\rhd_1,\rhd_2)$ with $\lhd_1=\lhd_2=0$ reduces to a dendriform algebra, and similarly, one with $\rhd_1 = \rhd_2 = 0$ also reduces to a dendriform algebra.
\end{exam}

\begin{prop}\mlabel{prop:com-pre-dia}
\begin{enumerate}
\item\mlabel{it:pre-dia}
	Let $(A,\lhd_1, \lhd_2,\rhd_1, \rhd_2)$ be a \pda. Let $\dva, \vda$ be the  binary operations given by ~\meqref{eq:pnvab1} and ~\meqref{eq:pnvab2}. Then the triple $(A, \dva, \vda)$ is a \da , which is called the {\bf sub-adjacent diassociative algebra} of $(A,\lhd_1, \lhd_2,\rhd_1, \rhd_2)$, and $(A,\lhd_1, \lhd_2,\rhd_1, \rhd_2)$  is called a {\bf compatible pre-diassociative algebra} of $(A, \dva, \vda)$.  Furthermore,
    $(A,L_{\lhd_1},R_{\lhd_2},L_{\rhd_1}, R_{\rhd_2})$ is a representation of $(A,\dva,\vda)$. 
\item \mlabel{it:dia-pre}
Conversely, let $A$ be a vector space equipped with binary operations $\lhd_1, \lhd_2,\rhd_1, \rhd_2$. If binary operations $\dva,\vda$ given by ~\meqref{eq:pnvab1} and~\meqref{eq:pnvab2} yield a \da $(A,\dva,\vda)$, and $(A,L_{\lhd_1},R_{\lhd_2},L_{\rhd_1}, R_{\rhd_2})$ is a representation of $(A,\dva,\vda)$, then $(A,\lhd_1,\lhd_2,\rhd_1,\rhd_2)$ is a \pda.
\end{enumerate}
\end{prop}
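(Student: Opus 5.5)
The plan is to reduce both parts to Proposition~\mref{pro:repr}, taking $V=A$ with $\ell_{\dva}=L_{\lhd_1}$, $r_{\dva}=R_{\lhd_2}$, $\ell_{\vda}=L_{\rhd_1}$, $r_{\vda}=R_{\rhd_2}$. First write each of \meqref{eq:repr1}--\meqref{eq:repr9} for this quadruple as identities on $z\in A$. For instance, \meqref{eq:repr1} becomes $x\lhd_1(y\lhd_1 z)=x\lhd_1(y\rhd_1 z)=(x\dva y)\lhd_1 z$, which is exactly \meqref{eq:pnva1}. Likewise, \meqref{eq:repr2} gives \meqref{eq:pnva2}, \meqref{eq:repr3} gives \meqref{eq:pnva3}, \meqref{eq:repr4} gives \meqref{eq:pnva4}, \meqref{eq:repr5} gives \meqref{eq:pnva5}, \meqref{eq:repr6} gives \meqref{eq:pnva6}, \meqref{eq:repr7} gives \meqref{eq:pnva7}, \meqref{eq:repr8} gives \meqref{eq:pnva8}, and \meqref{eq:repr9} gives \meqref{eq:pnva9}. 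This matching is read off term by term; for example, $r_{\dva}(x)r_{\vda}(y)z=(z\rhd_2 y)\lhd_2 x$ and $r_{\vda}(y\dva x)z=z\rhd_2(y\dva x)$, which is \meqref{eq:pnva6} after renaming variables. Once this dictionary is in place, the axioms of a \pda say precisely that $(A,L_{\lhd_1},R_{\lhd_2},L_{\rhd_1},R_{\rhd_2})$ satisfies the representation identities relative to the products $\dva,\vda$ of \meqref{eq:pnvab1}--\meqref{eq:pnvab2}.

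Part~\meqref{it:dia-pre} then follows at once. If $(A,\dva,\vda)$ is a \da and the quadruple is a representation, Proposition~\mref{pro:repr} gives \meqref{eq:repr1}--\meqref{eq:repr9}, which by the dictionary are \meqref{eq:pnva1}--\meqref{eq:pnva9}.

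For part~\meqref{it:pre-dia}, the remaining task is to show that $(A,\dva,\vda)$ is a \da; the representation claim then follows from the dictionary and Proposition~\mref{pro:repr}. I will verify \meqref{eq:dia1}--\meqref{eq:dia3} by expanding each side into the four operations and grouping the terms.

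For $(x\dva y)\dva z=x\dva(y\dva z)$, I expand
\[(x\dva y)\dva z=(x\dva y)\lhd_1 z+(x\lhd_1 y)\lhd_2 z+(x\lhd_2 y)\lhd_2 z.\]
Using \meqref{eq:pnva1}, \meqref{eq:pnva2} and \meqref{eq:pnva3}, this becomes
\[x\lhd_1(y\lhd_1 z)+x\lhd_1(y\lhd_2 z)+x\lhd_2(y\dva z)=x\dva(y\dva z).\]
The same three axioms also give $x\dva(y\vda z)$, by choosing the $\rhd$-versions of the equalities.

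For \meqref{eq:dia2}, I expand $(x\vda y)\dva z$ into $(x\vda y)\lhd_1 z+(x\rhd_1 y)\lhd_2 z+(x\rhd_2 y)\lhd_2 z$. Applying \meqref{eq:pnva4}, \meqref{eq:pnva5} and \meqref{eq:pnva6} turns this into $x\rhd_1(y\dva z)+x\rhd_2(y\dva z)=x\vda(y\dva z)$.

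For \meqref{eq:dia3}, I expand $(x\vda y)\vda z$ in the same way and use \meqref{eq:pnva7}, \meqref{eq:pnva8} and \meqref{eq:pnva9}. This gives both $x\vda(y\vda z)$ and $(x\dva y)\vda z$, since the $\dva$ and $\vda$ variants appear side by side in those axioms.

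I expect no real obstacle, only bookkeeping. The step most prone to error is the dictionary between \meqref{eq:repr1}--\meqref{eq:repr9} and \meqref{eq:pnva1}--\meqref{eq:pnva9}, because the right-multiplication operators reverse the order of composition. I would therefore write out each identity carefully with explicit variable renaming, rather than rely on pattern matching.
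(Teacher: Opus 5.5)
Your proposal is correct and follows essentially the same route as the paper. You get diassociativity of $(A,\dva,\vda)$ by summing \meqref{eq:pnva1}--\meqref{eq:pnva3}, \meqref{eq:pnva4}--\meqref{eq:pnva6} and \meqref{eq:pnva7}--\meqref{eq:pnva9}, and you get both the representation claim and the converse from the term-by-term correspondence between \meqref{eq:repr1}--\meqref{eq:repr9} and \meqref{eq:pnva1}--\meqref{eq:pnva9} via Proposition~\mref{pro:repr}; your variable renamings, including the order reversal for right multiplications, are accurate and supply details the paper leaves implicit.
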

\begin{proof}
(\mref{it:pre-dia})
If $(A,\lhd_1, \lhd_2,\rhd_1, \rhd_2)$ is a \pda, then ~\meqref{eq:pnva1}--\meqref{eq:pnva9} hold.  The summation of ~\meqref{eq:pnva1}--\meqref{eq:pnva3} gives ~\meqref{eq:dia1}. Similarly, ~\meqref{eq:dia2} follows from ~\meqref{eq:pnva4}--\meqref{eq:pnva6}, and ~\meqref{eq:dia3} follows from ~\meqref{eq:pnva7}--\meqref{eq:pnva9}. Moreover,  $(A,L_{\lhd_1},R_{\lhd_2},L_{\rhd_1},R_{\rhd_2})$ satisfies ~\meqref{eq:repr1}--\meqref{eq:repr9}. Then by Proposition~\mref{pro:repr},  $(A,L_{\lhd_1},R_{\lhd_2},L_{\rhd_1}$, 
$R_{\rhd_2})$ is a representation of $(A,\dva,\vda)$. 
\smallskip

\noindent
(\mref{it:dia-pre})
Suppose that $(A,\dva,\vda)$  is a \da and $(A,L_{\lhd_1},R_{\lhd_2},L_{\rhd_1}, R_{\rhd_2})$ is its representation.  Then ~\meqref{eq:pnva1}--\meqref{eq:pnva9} can be derived from ~\meqref{eq:repr1}--\meqref{eq:repr9} by setting $\ell_{\dva}=L_{\lhd_1},r_{\dva}=R_{\lhd_2},\ell_{\vda}=L_{\rhd_1}, r_{\vda}=R_{\rhd_2}$.
\end{proof}

The proposition below indicates that  $\calo$-operators naturally induce pre-diassociative algebras.
\begin{prop} \mlabel{pro:pnva}
Let $(A,\dva,\vda)$ be a \da, $(V,\ell_{\dva},r_{\dva},\ell_{\vda},r_{\vda})$ be a representation of $(A,\dva,\vda)$ and $T:V\rightarrow A$ be an $\calo$-operator of $(A,\dva,\vda)$ associated to $(V,\ell_{\dva},r_{\dva}, \ell_{\vda},r_{\vda})$. 
Define
\begin{align}
u \lhd_1 v&=\ell_{\dva}(T(u))v,\;
u \lhd_2 v=r_{\dva}(T(v))u,\mlabel{eq:op1}\\
u \rhd_1 v&=\ell_{\vda}(T(u))v,\;
u \rhd_2 v=r_{\vda}(T(v))u,\quad u,v\in V.\mlabel{eq:op2}
\end{align}
If $\dva$ and $\vda$ satisfy ~\meqref{eq:pnvab1} and~\meqref{eq:pnvab2}, then $(V,\lhd_1,\lhd_2,\rhd_1,\rhd_2)$ is a \pda.
\end{prop}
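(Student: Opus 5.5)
The plan is to verify the nine defining identities \meqref{eq:pnva1}--\meqref{eq:pnva9} directly on $V$. Each one should reduce to exactly one of the representation identities \meqref{eq:repr1}--\meqref{eq:repr9} of Proposition~\mref{pro:repr}, combined with the $\calo$-operator condition. Here the products $\dva,\vda$ appearing inside \meqref{eq:pnva1}--\meqref{eq:pnva9} are, for $u,v\in V$, the ones given by \meqref{eq:pnvab1}--\meqref{eq:pnvab2}:
$$u\dva v=\ell_{\dva}(T(u))v+r_{\dva}(T(v))u,\qquad u\vda v=\ell_{\vda}(T(u))v+r_{\vda}(T(v))u .$$

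The first step is to record the key observation that, by \meqref{eq:oper}, we have $T(u\dva v)=T(u)\dva T(v)$ and $T(u\vda v)=T(u)\vda T(v)$, the products on the right being those of $A$. Thus $T$ intertwines the new products on $V$ with those of $A$, and we may freely replace $T(u\dva v)$ by $T(u)\dva T(v)$ inside the arguments of $\ell_\ast, r_\ast$.

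The second step is to run through the identities one at a time. For \meqref{eq:pnva1} we compute
$$x\lhd_1(y\lhd_1 z)=\ell_{\dva}(T(x))\ell_{\dva}(T(y))z \quad\text{and}\quad x\lhd_1(y\rhd_1 z)=\ell_{\dva}(T(x))\ell_{\vda}(T(y))z .$$
By \meqref{eq:repr1} both equal $\ell_{\dva}(T(x)\dva T(y))z=\ell_{\dva}(T(x\dva y))z=(x\dva y)\lhd_1 z$. For \meqref{eq:pnva3} we compute
$$x\lhd_2(y\dva z)=r_{\dva}(T(y)\dva T(z))x \quad\text{and}\quad x\lhd_2(y\vda z)=r_{\dva}(T(y)\vda T(z))x .$$
By \meqref{eq:repr3} both equal $r_{\dva}(T(z))r_{\dva}(T(y))x=(x\lhd_2 y)\lhd_2 z$. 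In the same way:
\begin{itemize}
\item \meqref{eq:pnva2} follows from \meqref{eq:repr2};
\item \meqref{eq:pnva4} follows from \meqref{eq:repr4};
\item \meqref{eq:pnva5} follows from \meqref{eq:repr5};
\item \meqref{eq:pnva6} follows from \meqref{eq:repr6}, after rewriting $r_{\vda}(T(y)\dva T(z))=r_{\vda}(T(y\dva z))$;
\item \meqref{eq:pnva7} follows from \meqref{eq:repr7};
\item \meqref{eq:pnva8} follows from \meqref{eq:repr8};
\item \meqref{eq:pnva9} follows from \meqref{eq:repr9}.
\end{itemize}
In every case, the side of the identity involving a product of two elements inside an operator is converted via the intertwining property, and the remaining composition of two operators is read off from the corresponding representation identity.

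No step is conceptually hard. The only point needing care is bookkeeping: matching the order of arguments for the right actions (which reverse composition order, e.g.\ $r(b)r(a)$ versus $r(a\ast b)$) and keeping track of which of $\ell_{\dva},\ell_{\vda},r_{\dva},r_{\vda}$ appears. As a consistency check, the sum of \meqref{eq:pnva1}--\meqref{eq:pnva9} recovers the diassociativity of $(V,\dva,\vda)$, consistent with Proposition~\mref{prop:com-pre-dia}\meqref{it:pre-dia}.
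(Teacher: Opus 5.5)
Your proof is correct and follows the paper's route. You use \eqref{eq:oper} to get $T(u\dashv v)=T(u)\dva T(v)$ and $T(u\vdash v)=T(u)\vda T(v)$, then read off each of \eqref{eq:pnva1}--\eqref{eq:pnva9} from the matching identity among \eqref{eq:repr1}--\eqref{eq:repr9}. The paper writes out only \eqref{eq:pnva1} and calls the rest ``similar''; your one-to-one matching of the remaining eight cases checks out, with \eqref{eq:pnva2}, \eqref{eq:pnva5} and \eqref{eq:pnva8} not even needing the intertwining step.
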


\begin{proof}
	For all $u,v,w \in V$, we have
\begin{align*}
(u\dva v)\lhd_1 w &= \ell_{\dva}(T(u \lhd_1 v+u \lhd_2 v))w
= \ell_{\dva}(T(\ell_{\dva}(T(u)) v+r_{\dva}(T(v))u))w\\
&\stackrel{\meqref{eq:oper}}{=}\ell_{\dva}(T(u)\dva T(v))w\\
&\stackrel{\meqref{eq:repr1}}{=}\ell_{\dva}(T(u))\ell_{\dva}(T(v))w
\stackrel{\meqref{eq:repr1}}{=}\ell_{\dva}(T(u))\ell_{\vda}(T(v))w\\
&=u \lhd_1 (\ell_{\dva}(T(v))w)=u \lhd_1 (\ell_{\vda}(T(v))w)\\
&=u \lhd_1(v\lhd_1 w)=u \lhd_1(v\rhd_1 w),
\end{align*}
proving ~\meqref{eq:pnva1}. Similarly, we can prove ~\meqref{eq:pnva2}--\meqref{eq:pnva9}.
\end{proof}

Conversely, we have
\begin{prop}\mlabel{pro:id} Let $(A,\lhd_1, \lhd_2,\rhd_1, \rhd_2)$ be a \pda  and $(A,\dva,\vda)$ be the sub-adjacent diassociative algebra.
 Then the identity map $\id$ on $A$ is an $\mathcal{O}$-operator of $(A,\dva,\vda)$ associated to the induced representation
$(A,L_{\lhd_1},R_{\lhd_2},L_{\rhd_1}, R_{\rhd_2})$.
\end{prop}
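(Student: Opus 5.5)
By Proposition~\mref{prop:com-pre-dia}~\meqref{it:pre-dia}, $(A,L_{\lhd_1},R_{\lhd_2},L_{\rhd_1},R_{\rhd_2})$ is already a representation of the sub-adjacent diassociative algebra $(A,\dva,\vda)$. So it only remains to check the two defining identities~\meqref{eq:oper} of an $\mathcal{O}$-operator for $T=\id$. We make the assignment
$$\ell_{\dva}=L_{\lhd_1},\quad r_{\dva}=R_{\lhd_2},\quad \ell_{\vda}=L_{\rhd_1},\quad r_{\vda}=R_{\rhd_2}.$$

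For all $x,y\in A$, the right-hand side of the first identity in~\meqref{eq:oper} becomes
$$\id\big(L_{\lhd_1}(\id(x))y+R_{\lhd_2}(\id(y))x\big)=x\lhd_1 y+x\lhd_2 y.$$
By~\meqref{eq:pnvab1} this equals $x\dva y=\id(x)\dva\id(y)$, which is the left-hand side.

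The second identity is handled in the same way. Its right-hand side is $L_{\rhd_1}(x)y+R_{\rhd_2}(y)x=x\rhd_1 y+x\rhd_2 y$, and by~\meqref{eq:pnvab2} this is $x\vda y=\id(x)\vda\id(y)$.

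Hence $\id$ is an $\mathcal{O}$-operator associated to the induced representation. There is no real obstacle here. The only point needing care is that the representation property must be cited from Proposition~\mref{prop:com-pre-dia}, since Definition~\mref{defn:ope} requires the target to be a representation before the $\mathcal{O}$-operator identities are even meaningful.
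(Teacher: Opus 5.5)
Your proof is correct and matches the paper's argument. Both simply note that $L_{\lhd_1}(x)y+R_{\lhd_2}(y)x=x\lhd_1 y+x\lhd_2 y$ and $L_{\rhd_1}(x)y+R_{\rhd_2}(y)x=x\rhd_1 y+x\rhd_2 y$ are the two sub-adjacent products, so both $\mathcal{O}$-operator identities hold for $\id$ by definition, with the representation property supplied by Proposition~\ref{prop:com-pre-dia}.
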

\begin{proof} By ~\meqref{eq:pnvab1}--\meqref{eq:pnvab2}, for all $x,y\in A$,  we have 
$$x\dva y=x\lhd_1 y+x\lhd_2 y=L_{\lhd_1}(x)y+R_{\lhd_2}(y)x,\;
x\vda y=x\rhd_1 y+x\rhd_2 y=L_{\rhd_1}(x)y+R_{\rhd_2}(y)x.$$
By Definition~\mref{defn:ope}, $\id$ is an $\mathcal{O}$-operator of $(A,\dva,\vda)$ associated to $(A,L_{\lhd_1},R_{\lhd_2},L_{\rhd_1}, R_{\rhd_2})$.
\end{proof}

The following is an equivalence of compatible pre-diassociative algebra structures on a diassociative algebra and invertible $\mathcal{O}$-operators.
\begin{prop}
Let $(A, \dva,\vda)$ be a diassociative algebra. Then there exists a compatible pre-diassociative algebra structure $(A,\lhd'_1, \lhd'_2,\rhd'_1, \rhd'_2)$ on $A$  such that ~\meqref{eq:pnvab1} and ~\meqref{eq:pnvab2} hold,
if and only if there exists an invertible $\mathcal{O}$-operator $T: V \to A$ with respect to a representation $(V,\ell_{\dva},r_{\dva},\ell_{\vda}$,
$r_{\vda})$.
Moreover, if such an operator $T$ exists, the compatible pre-diassociative algebra operations on $A$ are explicitly given by:
\begin{align*}
&x \lhd'_1 y := T\left( \ell_{\dva}(x)T^{-1}(y) \right), \, x \lhd'_2 y := T\left( r_{\dva}(y) T^{-1}(x) \right),\\
&x \rhd'_1 y := T\left( \ell_{\vda}(x)T^{-1}(y) \right), \, x \rhd'_2 y := T\left( r_{\vda}(y) T^{-1}(x) \right), \quad \forall x, y \in A. 
\end{align*}
\end{prop}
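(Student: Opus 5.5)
For the forward direction I would use Proposition~\mref{pro:id}. Suppose $(A,\lhd'_1,\lhd'_2,\rhd'_1,\rhd'_2)$ is a compatible pre-diassociative algebra structure on $(A,\dva,\vda)$, so that \meqref{eq:pnvab1}--\meqref{eq:pnvab2} hold. By Proposition~\mref{prop:com-pre-dia}\meqref{it:pre-dia}, $(A,L_{\lhd'_1},R_{\lhd'_2},L_{\rhd'_1},R_{\rhd'_2})$ is a representation of $(A,\dva,\vda)$. By Proposition~\mref{pro:id}, the identity map $\id:A\to A$ is an $\mathcal{O}$-operator associated to this representation. It is trivially invertible, which settles this direction with $V=A$ and $T=\id$.

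For the converse, let $T:V\to A$ be an invertible $\mathcal{O}$-operator associated to $(V,\ell_{\dva},r_{\dva},\ell_{\vda},r_{\vda})$. Proposition~\mref{pro:pnva} gives a pre-diassociative algebra $(V,\lhd_1,\lhd_2,\rhd_1,\rhd_2)$ via \meqref{eq:op1}--\meqref{eq:op2}. I would transport this structure to $A$ along the linear isomorphism $T$, setting $x\lhd'_1 y:=T(T^{-1}(x)\lhd_1 T^{-1}(y))$, and similarly for the other three operations. Unwinding with \meqref{eq:op1}--\meqref{eq:op2}, this gives
\[
x\lhd'_1 y=T\bigl(\ell_{\dva}(x)T^{-1}(y)\bigr),\qquad x\lhd'_2 y=T\bigl(r_{\dva}(y)T^{-1}(x)\bigr),
\]
and the analogous formulas for $\rhd'_1,\rhd'_2$. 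These are exactly the formulas in the statement.

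Two checks remain. First, $(A,\lhd'_1,\lhd'_2,\rhd'_1,\rhd'_2)$ is a pre-diassociative algebra, because $T$ is an isomorphism of vector spaces carrying the four operations on $V$ to those on $A$, and the defining identities \meqref{eq:pnva1}--\meqref{eq:pnva9} are multilinear identities in the operations alone. Second, the structure is compatible with the original diassociative algebra. Writing $u=T^{-1}(x)$ and $v=T^{-1}(y)$,
\[
x\lhd'_1 y+x\lhd'_2 y=T\bigl(\ell_{\dva}(T(u))v+r_{\dva}(T(v))u\bigr)=T(u)\dva T(v)=x\dva y
\]
by \meqref{eq:oper}. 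The same computation with the second identity in \meqref{eq:oper} gives $x\rhd'_1y+x\rhd'_2y=x\vda y$. So \meqref{eq:pnvab1}--\meqref{eq:pnvab2} hold, and $(A,\dva,\vda)$ is the sub-adjacent algebra.

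The main point to handle carefully is how \meqref{eq:pnvab1}--\meqref{eq:pnvab2} enter Proposition~\mref{pro:pnva}. There, the $\dva,\vda$ appearing inside \meqref{eq:pnva1}--\meqref{eq:pnva9} on $V$ mean the sums $\lhd_1+\lhd_2$ and $\rhd_1+\rhd_2$. I would check that transporting these sums along $T$ reproduces the original $\dva,\vda$ on $A$, which is exactly the $\mathcal{O}$-operator computation above. With that, the identities transported to $A$ are the pre-diassociative identities relative to the given diassociative operations, and the structure is genuinely compatible.
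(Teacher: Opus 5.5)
Your proposal is correct and follows essentially the same route as the paper. For a compatible pre-diassociative structure you take $T=\id$ via Propositions~\ref{prop:com-pre-dia} and \ref{pro:id}; for an invertible $T$ you transport the pre-diassociative structure of Proposition~\ref{pro:pnva} along $T$ and check compatibility through \eqref{eq:oper}. Your explicit check that $T$ carries the sums $\lhd_1+\lhd_2$ and $\rhd_1+\rhd_2$ on $V$ to $\dashv_A$ and $\vdash_A$ spells out a step the paper leaves implicit.
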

\begin{proof} Suppose that  $T: V \to A$ is an  invertible $\mathcal{O}$-operator with respect to a representation $(V,\ell_{\dva},r_{\dva},\ell_{\vda},r_{\vda})$. For any $u,v\in V$, we let $x:=T(u)$ and $y:=T(v)$ for some $x,y\in A$. Then by ~\meqref{eq:op1} and ~\meqref{eq:op2}, we obtain
\begin{align*}
u \lhd_1 v&=\ell_{\dva}(T(u))v=T^{-1}(x\lhd_1' y),\;
u \lhd_2 v=r_{\dva}(T(v))u=T^{-1}(x\lhd_2' y),\\
u \rhd_1 v&=\ell_{\vda}(T(u))v=T^{-1}(x\rhd_1' y),\;
u \rhd_2 v=r_{\vda}(T(v))u=T^{-1}(x\rhd_2' y),\quad u,v\in V.
\end{align*}
This gives
\begin{align*}
x\lhd_1' y=T(u)\lhd'_1 T(v)&=T(u\lhd _1 v),\; x\lhd_2' y=T(u)\lhd'_2 T(v)=T(u\lhd _2 v),\\
 x\rhd_1' y =T(u)\rhd'_1 T(v)&=T(u\rhd _1 v), \; x\rhd_2' y=T(u)\rhd'_2 T(v)=T(u\rhd _2 v).
\end{align*}
According to the invertibility of $T$,  $\lhd'_1,\lhd'_2,\rhd'_1,\rhd'_2$ satisfy ~\meqref{eq:pnva1}-\meqref{eq:pnva9}.
By ~\meqref{eq:oper}, we have 
\begin{align*}x\dva y&=T(\ell_{\dva}(x)T^{-1}(y)+r_{\dva}(y)T^{-1}(x))=x\lhd'_1 y+ x\lhd'_2 y,\\
x\vda y&=T(\ell_{\vda}(x)T^{-1}(y)+r_{\vda}(y)T^{-1}(x))=x\rhd'_1 y+ x\rhd'_2 y.
\end{align*}
Then by Proposition~\mref{pro:pnva}, $(A,\lhd'_1, \lhd'_2,\rhd'_1, \rhd'_2)$ is a compatible pre-diassociative algebra of $(A,\dva,\vda)$.

Conversely, if $(A,\lhd'_1, \lhd'_2,\rhd'_1, \rhd'_2)$ is a compatible pre-diassociative algebra of $(A,\dva, \vda)$, by Proposition~\mref{prop:com-pre-dia},  $(A,L_{\lhd'_1},R_{\lhd'_2},L_{\rhd'_1}, R_{\rhd'_2})$ is a representation of $(A,\dva,\vda)$. Moreover, by Proposition~\mref{pro:id},  $\id$ is an $\mathcal{O}$-operator of $(A,\dva,\vda)$ associated to $(A,L_{\lhd'_1},R_{\lhd'_2},L_{\rhd'_1}, R_{\rhd'_2})$.
\end{proof}

For a given pre-diassociative algebra $(A, \lhd_1, \lhd_2,\rhd_1,\rhd_2)$, by Proposition~\mref{prop:com-pre-dia}, $(A,\dva,\vda)$ is a  \da, and   $(A,L_{\lhd_1},R_{\lhd_2},L_{\rhd_1}, R_{\rhd_2})$ is a representation of $(A,\dva,\vda)$.  Then by Proposition~\mref{prop:dual-rep},
$(A^*, R^*_{\lhd_2}-R^*_{\rhd_2},L_{\rhd_1}^*, R_{\lhd_2}^*, L_{\rhd_1}^*-L_{\lhd_1}^*)$ is a representation of $(A,\dva,\vda)$. So by Definition~\mref{def:semi},   $A \ltimes^{R^*_{\lhd_2}-R^*_{\rhd_2},L_{\rhd_1}^*}_{R_{\lhd_2}^*, L_{\rhd_1}^*-L_{\lhd_1}^*} A^*$ is  the semi-direct product diassociative algebra.
Then the following result constructs a nondegenerate symmetric  solution of the DYBE from any pre-diassociative algebra, along with an explicit bilinear form encoding this structure.
\begin{theorem}
\mlabel{thm:pre-dia}
Let $(A, \lhd_1, \lhd_2,\rhd_1,\rhd_2)$ be a pre-diassociative algebra. Let $\{e_1, \dots, e_n\}$ be a basis of $A$, and $\{e_1^*, \dots, e_n^*\}$ its dual basis. Define 
$r:= \sum_{i=1}^n \left( e_i^* \otimes e_i + e_i \otimes e_i^* \right)$. Then
$r$ is a symmetric solution of the DYBE in $A \ltimes^{R^*_{\lhd_2}-R^*_{\rhd_2},L_{\rhd_1}^*}_{R_{\lhd_2}^*, L_{\rhd_1}^*-L_{\lhd_1}^*} A^*$. Furthermore, 
$ r$ is nondegenerate, and 
the bilinear form $\omega_r$ on $A \ltimes^{R^*_{\lhd_2}-R^*_{\rhd_2},L_{\rhd_1}^*}_{R_{\lhd_2}^*, L_{\rhd_1}^*-L_{\lhd_1}^*} A^*$ induced by $r$ is given by:
\begin{equation}\mlabel{eq:inv-omeg}
   \omega_r(x + a^*, y + b^*) = \langle a^*, y \rangle + \langle b^*, x \rangle,\quad x, y \in A,  \;a^*, b^* \in A^*.
\end{equation}
\end{theorem}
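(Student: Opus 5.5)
The plan is to reduce everything to Theorem~\mref{thm:semi-dybe} via Proposition~\mref{pro:id}. By Proposition~\mref{prop:com-pre-dia}, $(A,\dva,\vda)$ is a \da and $(A,L_{\lhd_1},R_{\lhd_2},L_{\rhd_1},R_{\rhd_2})$ is a representation of it. By Proposition~\mref{pro:id}, the identity map $T=\id:A\to A$ is an $\mathcal{O}$-operator associated to this representation.

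We then apply Theorem~\mref{thm:semi-dybe} with $V=A$, $\ell_{\dva}=L_{\lhd_1}$, $r_{\dva}=R_{\lhd_2}$, $\ell_{\vda}=L_{\rhd_1}$, $r_{\vda}=R_{\rhd_2}$ and $T=\id$. Under the identification used in that proof, $T=\sum_i e_i^*\ot T(e_i)=\sum_i e_i^*\ot e_i$, so $T+\sigma(T)=\sum_i(e_i^*\ot e_i+e_i\ot e_i^*)=r$. Theorem~\mref{thm:semi-dybe} then says that $r$ is a symmetric solution of the DYBE in the semi-direct product whose structure maps on $V^*=A^*$ are $\ell^*_{\dva}$-type data built from $r^*_{\dva}-r^*_{\vda}$, $r^*_{\dva}$, $\ell^*_{\vda}$ and $\ell^*_{\vda}-\ell^*_{\dva}$.

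The step I expect to need the most care is matching this semi-direct product with $A \ltimes^{R^*_{\lhd_2}-R^*_{\rhd_2},L_{\rhd_1}^*}_{R_{\lhd_2}^*, L_{\rhd_1}^*-L_{\lhd_1}^*} A^*$. That algebra is the semi-direct product with the dual representation of Proposition~\mref{prop:dual-rep}, namely $(A^*,-r^*_{\oba},\ell^*_{\vda},r^*_{\dva},\ell^*_{\oba})$. Substituting the maps gives $-r^*_{\oba}=R^*_{\lhd_2}-R^*_{\rhd_2}$, $\ell^*_{\vda}=L^*_{\rhd_1}$, $r^*_{\dva}=R^*_{\lhd_2}$ and $\ell^*_{\oba}=L^*_{\rhd_1}-L^*_{\lhd_1}$. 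The index placement in the statement of Theorem~\mref{thm:semi-dybe} is written somewhat inconsistently, so I would check directly in its proof which map acts in which slot. For instance, $e_i^*\vd' T(e_j)$ is computed there with $\ell^*_{\vda}-\ell^*_{\dva}$, which is a right action for $\vd'$, matching $r'_{\vd}=\ell^*_{\oba}$. Once this check is done, the two semi-direct products coincide, and the first claim follows.

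Nondegeneracy is then immediate. Viewed via \meqref{eq:identify} as a map $(A\oplus A^*)^*=A^*\oplus A\to A\oplus A^*$, $r$ sends $a^*+x$ to $\sum_i(\langle a^*,e_i\rangle e_i+\langle x,e_i^*\rangle e_i^*)=a^*+x$, viewed in $A\oplus A^*$. This is the swap isomorphism, so it is bijective. The induced form is $\omega_r(X,Y)=\langle r^{-1}(X),Y\rangle$. Since $r^{-1}(x+a^*)=a^*+x\in A^*\oplus A$, pairing with $y+b^*$ gives $\langle a^*,y\rangle+\langle b^*,x\rangle$, which is exactly \meqref{eq:inv-omeg}. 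No further computation is needed beyond the bookkeeping of the dual-basis identification.
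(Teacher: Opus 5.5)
Your proposal is correct and follows the paper's proof. Proposition~\ref{pro:id} makes $\id$ an $\mathcal{O}$-operator, Theorem~\ref{thm:semi-dybe} with $T=\id$ shows $r=T+\sigma(T)$ is a symmetric solution, and invertibility of $\id$ gives nondegeneracy; your slot-by-slot matching of the semi-direct product and your explicit computation of $r^{-1}$ and $\omega_r$ just fill in what the paper leaves implicit.

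One harmless slip: by \eqref{eq:identify} one gets $r(a^*+x)=\sum_i\bigl(\langle x,e_i^*\rangle e_i+\langle a^*,e_i\rangle e_i^*\bigr)=x+a^*$, so the vectors $e_i$ and $e_i^*$ in your intermediate sum are interchanged. Your conclusion and the formula for $\omega_r$ are unaffected.
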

\begin{proof}By Proposition~\mref{pro:id}, $\id$ is an $\mathcal{O}$-operator of $(A,\dva,\vda)$ associated to $(A,L_{\lhd_1}, R_{\lhd_2}, L_{\rhd_1}, R_{\rhd_2})$. By Theorem~\mref{thm:semi-dybe}, $r$ is a symmetric solution of the DYBE in  $A \ltimes^{R^*_{\lhd_2}-R^*_{\rhd_2},L_{\rhd_1}^*}_{R_{\lhd_2}^*, L_{\rhd_1}^*-L_{\lhd_1}^*} A^*$. Since $\id$ is invertible, $r$ is nondegenerate. Then, the image of $r$ under the inverse of the linear isomorphism $(A\oplus A^*)\ot (A\oplus A^*)\cong \Hom((A\oplus A^*)^*, A\oplus A^*)$ yields a bilinear form $\omega_r$ via ~\meqref{eq:inv-omeg}, and hence  $\omega_r$ is nondegenerate.
\end{proof}

\section{Applications}
\mlabel{sec:5}
In this section, we show that a Manin triple of diassociative algebras gives rise to a Manin triple of Leibniz algebras. We also show that the tensor product of a quadratic dendriform algebra and a Manin triple of diassociative algebras gives rise to a Manin triple of Lie algebras, and the tensor product of a quadratic perm algebra and a double construction of a Frobenius algebra gives rise to a Manin triple of diassociative algebras.
These correspondences are also interpreted in terms of bialgebras.

\subsection{Construction of Leibniz bialgebras from diassociative bialgebras}

\begin{defn}\cite{BGLZ252} 
 A {\bf Leibniz coalgebra}  is a pair  $(A, \Delta)$, where $A$ is a vector space and  $\Delta: A\to A \otimes A $ is a co-multiplication satisfying the \emph{co-Leibniz identity}
\begin{equation}\mlabel{eq:coleft-Lei}
(\id\ot \Delta)\Delta(x)=(\Delta\ot \id)\Delta(x)+(\sigma\ot\id)(\id\ot \Delta)\Delta(x),\quad x\in A.
\end{equation}
\end{defn}
\begin{prop} 	A diassociative  coalgebra $(A, \Delta_{\dva}, \Delta_{\vda})$ yields a Leibniz coalgebra $(A, \Delta)$, where the coproduct $\Delta$ is defined by 
\begin{equation}\mlabel{eq:dia-coproduct}
\Delta(x):=\Delta_{\vda}(x)-\sigma\Delta_{\dva}(x),\quad x\in A.
\end{equation}
\end{prop}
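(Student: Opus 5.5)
The plan is to dualize. By finite-dimensionality and Proposition~\mref{prop:dia-coalg}, $(A,\Delta_{\dva},\Delta_{\vda})$ is a diassociative coalgebra if and only if $(A^*,\dvs,\vds)$ is a diassociative algebra, where $\dvs,\vds$ are the linear duals of $\Delta_{\dva},\Delta_{\vda}$. Then Example~\mref{exam:dia-Lei} makes $(A^*,\circ_{A^*})$ a Leibniz algebra with
\[
a^*\circ_{A^*} b^* = a^*\vds b^* - b^*\dvs a^*.
\]
For $a^*,b^*\in A^*$ and $x\in A$ we compute
\[
\langle a^*\circ_{A^*} b^*, x\rangle=\langle \Delta_{\vda}(x), a^*\ot b^*\rangle-\langle \Delta_{\dva}(x), b^*\ot a^*\rangle=\langle \Delta_{\vda}(x)-\sigma\Delta_{\dva}(x), a^*\ot b^*\rangle .
\]
So $\Delta$ in \meqref{eq:dia-coproduct} is exactly the linear dual of $\circ_{A^*}$.

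It remains to check that a comultiplication $\Delta$ satisfies the co-Leibniz identity \meqref{eq:coleft-Lei} if and only if its dual product $\circ$ on $A^*$ satisfies the left Leibniz identity. Pairing each term of \meqref{eq:coleft-Lei} with $a^*\ot b^*\ot c^*$ gives:
\begin{itemize}
\item $(\id\ot\Delta)\Delta(x)$ gives $\langle a^*\circ(b^*\circ c^*),x\rangle$;
\item $(\Delta\ot\id)\Delta(x)$ gives $\langle (a^*\circ b^*)\circ c^*,x\rangle$;
\item $(\sigma\ot\id)(\id\ot\Delta)\Delta(x)$ gives $\langle b^*\circ(a^*\circ c^*),x\rangle$.
\end{itemize}
Hence \meqref{eq:coleft-Lei} is precisely the dual of $a^*\circ(b^*\circ c^*)=(a^*\circ b^*)\circ c^*+b^*\circ(a^*\circ c^*)$, which holds. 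This proves the claim.

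The only delicate step is bookkeeping the position of the flip $\sigma$, both in identifying $\sigma\Delta_{\dva}$ with $b^*\dvs a^*$ and in identifying $(\sigma\ot\id)$ with the swap of the first two arguments. If one prefers to avoid duality, the identity can also be verified directly from \meqref{eq:co1}--\meqref{eq:co3}. This is the coalgebraic transcription of Loday's argument, with the nine terms cancelling pairwise.
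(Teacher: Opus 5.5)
Your proposal is correct and follows the same route as the paper, whose proof just cites Proposition~\ref{prop:dia-coalg} and Example~\ref{exam:dia-Lei}. Your pairing computations, which identify $\Delta$ as the dual of $\circ_{A^*}$ and match the three terms of the co-Leibniz identity (including the placement of $\sigma$) with the left Leibniz identity on $A^*$, correctly supply the details the paper leaves implicit. The only slip is in your optional last remark: a direct check involves twelve terms (each of the three terms expands into four), not nine, and this does not affect the argument.
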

\begin{proof}This follows from Example~\mref{exam:dia-Lei} and Proposition~\mref{prop:dia-coalg}.
\end{proof}

\begin{defn}\cite{Cha05,TS22} 
A {\bf quadratic Leibniz algebra} is a Leibniz algebra
$(A,\circ_{A})$ equipped with a nondegenerate antisymmetric bilinear form $\omega$ that is 
{\bf invariant}  in the following sense:
 \begin{equation}\mlabel{eq:ql}
   \omega(x,y\circ_{A} z)=\omega(x\circ_{A} z+z\circ_{A} x,y),\quad x,y,z\in A.  
 \end{equation}
\end{defn}
\begin{prop}\mlabel{prop:dia-ind-Lei}
Every quadratic diassociative algebra $(A,\dva,\vda)$ induces a  quadratic Leibniz algebra $(A,\circ_{A})$, where the operation $\circ_{A}$ is given by ~\eqref{eq:dil}. 
\end{prop}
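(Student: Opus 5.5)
By Example~\mref{exam:dia-Lei}, $(A,\circ_A)$ with $x\circ_A y=x\vda y-y\dva x$ is already a Leibniz algebra. We keep the same form $\omega$, which is nondegenerate and antisymmetric by hypothesis. So the only thing to prove is the invariance condition~\meqref{eq:ql}:
$$\omega(x,y\circ_A z)=\omega(x\circ_A z+z\circ_A x,\,y),\quad x,y,z\in A.$$
This is a finite linear identity among terms of the form $\omega(\cdot,\cdot\star\cdot)$ with $\star\in\{\dva,\vda\}$. It should follow purely from the invariance~\meqref{eq:omega}, its reformulations~\meqref{eq:omega1}--\meqref{eq:omega2}, the Connes cocycle identities from the Remark after Definition~\mref{defn:qua-dia}, and antisymmetry. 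No diassociativity axioms should be needed.

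Concretely, I would expand both sides. The left side is $\omega(x,y\vda z)-\omega(x,z\dva y)$. The right side is
$$\omega(x\vda z,y)-\omega(z\dva x,y)+\omega(z\vda x,y)-\omega(x\dva z,y).$$
I would then reduce every term to a normal form, namely a combination of the four quantities $\omega(x\vda z,y)$, $\omega(z\vda x,y)$, $\omega(x\dva z,y)$, $\omega(z\dva x,y)$. That is, everything is expressed via products of $x$ and $z$ paired with $y$. The tools are the following.
\begin{enumerate}
\item For a $\dva$-product, use~\meqref{eq:omega1}, $\omega(a\dva b,c)=-\omega(b\vda c,a)$, together with its consequence $\omega(a,b\dva c)=\omega(c,a\vda b)-\omega(c,a\dva b)$ from~\meqref{eq:omega}.
\item For a $\vda$-product, use the Connes cocycle identity for $(A,\vda)$,
$$\omega(a\vda b,c)+\omega(b\vda c,a)+\omega(c\vda a,b)=0,$$
and likewise for $(A,\dva)$.
\end{enumerate}

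For example, $\omega(x,z\dva y)=\omega(y,x\vda z)-\omega(y,x\dva z)=-\omega(x\vda z,y)+\omega(x\dva z,y)$, which is already in normal form. Next, $\omega(x,y\vda z)=-\omega(y\vda z,x)$. By the $\vda$-cocycle identity this equals $\omega(z\vda x,y)+\omega(x\vda y,z)$, and the leftover term $\omega(x\vda y,z)$ is rewritten via~\meqref{eq:omega} as $\omega(x,y\vda z)-\omega(x,y\dva z)$. The aim is to combine such substitutions until the left side minus the right side collapses to zero.

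The main obstacle is that these substitutions can cycle, as the $\omega(x\vda y,z)$ term just illustrates. One must pick the order of rewriting so that every term lands in the chosen normal form. I would first try to eliminate all terms in which $y$ sits inside a product by using~\meqref{eq:omega1} and~\meqref{eq:omega2}, solved for the appropriate term. If some term resists, I would switch to the symmetric normal form in which $x$ is the outer argument, and compare coefficients on both sides.
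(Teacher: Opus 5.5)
Your proposal is correct and is essentially the paper's argument: expand both sides, use \meqref{eq:omega} and antisymmetry to rewrite every term on the left as $\omega(\,\cdot\,,y)$ with a product of $x$ and $z$ in the first slot, and compare with the expanded right side. The cycling you anticipate comes only from routing $\omega(y\vda z,x)$ through the Connes cocycle identity. Applying the second identity of \meqref{eq:omega} directly gives $\omega(x,y\vda z)=-\omega(y,z\oba x)=\omega(z\vda x,y)-\omega(z\dva x,y)$, and together with your expression for $\omega(x,z\dva y)$ this matches the right side term by term, so no choice of rewriting order is needed.
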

\begin{proof}
By Example~\mref{exam:dia-Lei}, $(A,\circ_{A})$ is a Leibniz algebra. It suffices to verify ~\meqref{eq:ql}. For all $x,y,z\in A$, we have 
\begin{align*}
 \omega(x,y\circ_{A} z)&\stackrel{\eqref{eq:dil}}{=}\omega(x,y\vda z-z\dva y) =\omega(x,y\vda z)-\omega(x,z\dva y) =-\omega(y\vda z,x)-\omega(x,z\dva y)\\
&\stackrel{\eqref{eq:omega}}{=}-\omega(y,z\oba x)-\omega(y,x\oba z)=\omega(z\oba x+x\oba z,y).
\end{align*}
 On the other hand, 
 \begin{align*}
      \omega(x\circ_{A} z+z\circ_{A} x,y)\stackrel{\eqref{eq:dil}}{=}\omega(x\vda z-z\dva x+ z\vda x-x\dva z,y) =\omega(x\oba z+ z\oba x,y).
 \end{align*} 
 Thus, ~\eqref{eq:ql} holds.
\end{proof}
\begin{defn}\mlabel{defn:Leibniz}\mcite{TS22}
		A {\bf Manin triple of Leibniz algebras} is a collection $((A\oplus B,\circ_{d},\omega),(A,\circ_{A}),(B$,
		$\circ_{B}))$, such that $(A\oplus B,\circ_{d},\omega)$ is a quadratic Leibniz algebra, $(A,\circ_{A}),(B,\circ_{B})$ are Leibniz subalgebras of $(A\oplus B,\circ_{d})$ and $A,B$ are isotropic with respect to $\omega$.
		A Manin triple of Leibniz algebras is called {\bf standard} if $B=A^{*}$ and $\omega=\omega_{d}$ is given by \meqref{eq:omd}.
\end{defn}
\begin{prop}\mlabel{prop:dia-manin-lei}
A Manin triple $((A\oplus B,\dashv_d,\vd_d,\omega ),(A,\dva,\vda), (B, \dashv_{B}, \vdb))$ of diassociative algebras induces a Manin triple  $((A\oplus B,\circ_d,\omega ),(A, \circ_A), (B, \circ_{B}))$ of Leibniz algebras, where the operations $\circ_d, \circ_A$ and $\circ_{B}$ are given through ~\eqref{eq:dil}.
In particular, a standard Manin triple of diassociative algebras induces a standard Manin triple of Leibniz algebras in the same way.
\end{prop}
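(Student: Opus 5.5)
The plan is to reduce everything to results already established: Example~\mref{exam:dia-Lei} for the Leibniz structures, and Proposition~\mref{prop:dia-ind-Lei} for invariance of the form. Start from the Manin triple $((A\oplus B,\dashv_d,\vd_d,\omega),(A,\dva,\vda),(B,\dashv_B,\vdb))$. By definition $(A\oplus B,\dashv_d,\vd_d,\omega)$ is a quadratic diassociative algebra. Applying Proposition~\mref{prop:dia-ind-Lei} to it shows that $(A\oplus B,\circ_d,\omega)$, with $u\circ_d v:=u\vd_d v-v\dashv_d u$, is a quadratic Leibniz algebra: it is Leibniz by Example~\mref{exam:dia-Lei}, and $\omega$ is nondegenerate, antisymmetric and satisfies \meqref{eq:ql}.

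Next I check that $A$ and $B$ are Leibniz subalgebras with the induced products. Since $(A,\dva,\vda)$ is a diassociative subalgebra, the restrictions of $\dashv_d,\vd_d$ to $A$ are $\dva,\vda$. Hence for $x,y\in A$,
$$x\circ_d y=x\vda y-y\dva x=x\circ_A y\in A.$$
So $(A,\circ_A)$ is a Leibniz subalgebra, and it is Leibniz in its own right by Example~\mref{exam:dia-Lei}. The same argument applies verbatim to $B$. Isotropy of $A$ and $B$ with respect to $\omega$ depends only on $\omega$, not on the products, so it carries over unchanged. This yields the Manin triple of Leibniz algebras in Definition~\mref{defn:Leibniz}.

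For the standard case, take $B=A^*$ and $\omega=\omega_d$ from \meqref{eq:omd}. The construction keeps the same underlying spaces and the same form, so the induced Leibniz Manin triple is again standard by definition.

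There is no serious obstacle here. The only point needing care is confirming that the invariance convention \meqref{eq:omega} for diassociative algebras matches the Leibniz convention \meqref{eq:ql}, and this is exactly what Proposition~\mref{prop:dia-ind-Lei} provides.
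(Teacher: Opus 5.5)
Your proposal is correct and takes the same approach as the paper. The paper's proof simply cites Example~\mref{exam:dia-Lei} and Proposition~\mref{prop:dia-ind-Lei}; you spell out the routine checks it leaves implicit: that $\circ_d$ restricts to $\circ_A$ and $\circ_B$, that isotropy is unchanged, and that the standard form $\omega_d$ is preserved.
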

\begin{proof}
This follows from Example~\mref{exam:dia-Lei} and Proposition~\mref{prop:dia-ind-Lei}.
\end{proof}
\begin{defn}\cite{BLST25,TS22}
Let $(A,\circ_{A})$ and $(A^{*},\circ_{A^{*}})$ be Leibniz algebras. Let $\Delta_{A} :A \to A\ot A$ be the linear dual of $\circ_{A^{*}}$. Then $(A,\circ_{A},\Delta_{A})$ is called
 a {\bf Leibniz bialgebra} if the following conditions hold: for all $x,y\in A$,
\begin{equation*}
\sigma((R_{\circ_{A}}(y)\ot \id)\Delta_{A} (x))=(R_{\circ_{A}}(x)\ot \id)\Delta_{A} (y),
\end{equation*}
\begin{equation*}
\Delta_{A} (x\circ_{A}y)= (\id\ot R_{\circ_{A}}(y)-L_{\circ_{A}}(y)\ot \id-R_{\circ_{A}}(y)\ot\id) (\id+\sigma) \Delta_{A} (x)+(\id\ot L_{\circ_{A}}(x)+L_{\circ_{A}}(x)\ot \id)\Delta_{A} (y).
\end{equation*}
\end{defn}
Then by \cite[Theorem 2.14]{TS22}, we obtain
\begin{theorem} \mlabel{thm:Lei-equiv}
Let $(A,\circ_{A})$ and $(A^*, \circ_{A^*})$ be Leibniz algebras. Let $\Delta_A $ be the linear dual of $\circ_{A^*}$. Then the following
 conditions are equivalent:
 \begin{enumerate}
\item
$(A,\circ_{A},\Delta_A)$ is a Leibniz bialgebra.
\item There is a standard Manin triple of Leibniz algebras
$((A\oplus A^{*},\circ_{d},\omega_{d}), (A,\circ_{A}),(A^{*},\circ_{A^{*}}))$, where the invariant skew
symmetric bilinear form $\omega_d$  on $\frakg\oplus \frakg^*$ is given by ~\eqref{eq:omd}.
\end{enumerate}
 \end{theorem}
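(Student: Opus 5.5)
This equivalence is the Leibniz analogue of Corollary~\mref{coro:equiv}, and the paper quotes it from \cite[Theorem 2.14]{TS22}. To prove it directly I would copy the route of Theorems~\mref{thm:Manin} and~\mref{thm:matched}, going through a matched pair of Leibniz algebras.

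First, introduce a matched pair of Leibniz algebras $(A,A^*,\ell_A,r_A,\ell_{A^*},r_{A^*})$. This means $A\oplus A^*$ carries a Leibniz product of the form \meqref{eq:m1}. Expanding the Leibniz identity on mixed elements gives the standard list of conditions: $A^*$ is a representation of $A$, $A$ is a representation of $A^*$, and a few cross compatibilities hold. Then fix the "coadjoint" choice forced by the form $\omega_d$ of \meqref{eq:omd}. To find it, note that the invariance \meqref{eq:ql}, together with isotropy of $A$ and $A^*$, pins down $x\circ_d a^*$ and $a^*\circ_d x$. Pair them against $y$ and $b^*$, exactly as in the ($\Leftarrow$) part of Theorem~\mref{thm:Manin}. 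This expresses the mixed products in terms of $L^*_{\circ_A},R^*_{\circ_A}$ (for instance $L^*+R^*$ and $-R^*$ type combinations) and the corresponding maps on $A^*$.

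Step two: show that this particular matched pair is equivalent to a standard Manin triple. For ($\Rightarrow$), check \meqref{eq:ql} for $\omega_d$ on the double by expanding both sides on $x+a^*,y+b^*,z+c^*$. The terms match pairwise, as in the computation in Theorem~\mref{thm:Manin}. For ($\Leftarrow$), the derivation just given shows that the Manin triple product has exactly the matched-pair form.

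Step three: dualize the matched-pair compatibility conditions with $\Delta_A=\circ_{A^*}^*$, as was done in Theorem~\mref{thm:matched}. Each condition paired with $a^*\ot b^*$ should become a coproduct identity. The two representation conditions are automatic, since the dual of the adjoint-type representation is a representation, as in Proposition~\mref{prop:dual-rep}. The remaining cross conditions should reduce to the two displayed Leibniz bialgebra axioms.

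The main obstacle is this last reduction. There are several cross conditions, and one must show they collapse to exactly the two axioms, with some becoming redundant. For the redundancy I would lean on the Leibniz identity and on the symmetry $\sigma((R(y)\ot\id)\Delta(x))=(R(x)\ot\id)\Delta(y)$. That symmetry is the dual of the condition encoding $x\circ a^*+a^*\circ x$. I expect it, combined with the second axiom, to yield the remaining conditions after applying $\sigma$, as in the passage from \meqref{eq:bi3}--\meqref{eq:bi6} to \meqref{eq:bi15}--\meqref{eq:bi16}.
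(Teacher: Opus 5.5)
Your plan is correct, and the obstacle you flag does resolve. The paper gives no argument of its own for this statement. It imports it from \cite[Theorem 2.14]{TS22}, which establishes the equivalence through matched pairs of Leibniz algebras. Your outline, the Leibniz counterpart of Theorems~\ref{thm:Manin} and~\ref{thm:matched}, therefore reconstructs the cited proof: it gives a self-contained argument at the cost of bookkeeping.

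Two precisions are worth adding.

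First, the mixed products forced by \eqref{eq:ql} and \eqref{eq:omd} are
\[
x\circ_d a^*=\big(L^*_{\circ_{A^*}}(a^*)+R^*_{\circ_{A^*}}(a^*)\big)x-L^*_{\circ_A}(x)a^*,\qquad
a^*\circ_d x=-L^*_{\circ_{A^*}}(a^*)x+\big(L^*_{\circ_A}(x)+R^*_{\circ_A}(x)\big)a^*.
\]
So the left actions are $-L^*$, not $-R^*$. Moreover, $(A^*,-L^*_{\circ_A},L^*_{\circ_A}+R^*_{\circ_A})$ is a representation of any left Leibniz algebra; the last representation axiom uses that $u\circ_A v+v\circ_A u$ left-annihilates $A$. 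This disposes of the two representation conditions.

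Second, here is how the cross conditions reduce to the two axioms.
\begin{enumerate}
\item Pairing the $A$-component of the Leibniz identity on $(x,a^*,y)$ with $b^*$ gives exactly the second displayed axiom.
\item Doing the same for $(a^*,x,y)$ gives another formula for $\Delta_A(x\circ_A y)$. The difference of the two formulas is precisely $\sigma((R_{\circ_A}(y)\ot\id)\Delta_A(x))=(R_{\circ_A}(x)\ot\id)\Delta_A(y)$. This is the $A$-component of $(x\circ_d a^*+a^*\circ_d x)\circ_d y=0$, so your heuristic is right.
\item The $(x,y,a^*)$ condition is just $(\id+\sigma)$ applied to the $(x,a^*,y)$ one.
\item For the conditions with $A$ and $A^*$ interchanged, pair their $A^*$-components with $y$. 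The $(x,a^*,b^*)$ condition reproduces the $(x,a^*,y)$ formula, and the $(a^*,x,b^*)$ condition reproduces the $(a^*,x,y)$ formula. The $(a^*,b^*,x)$ condition follows from these two.
\end{enumerate}
None of these redundancies needs the Leibniz identity of $A$; they use only the first axiom and elementary manipulations with $\sigma$.
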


\begin{theorem}
\mlabel{thm:dabi-leibi}
	Let $(A,\dva,\vda)$ and $(A^*,\dashv_{A^{*}},\vds)$ be \das. Let $\Delta_{\dva}$ and $\Delta_{\vda}$ be the linear duals of $\dashv_{A^{*}}$ and  $\vds$, respectively. If  the $5$-tuple $(A,\dva,\vda,\Delta_{\dva},\Delta_{\vda})$  is a \dabi, then it induces a Leibniz bialgebra $(A, \circ_{A}, \Delta)$,  where $\circ_{A}$ and $\Delta$  are given by ~\eqref{eq:dil} and ~\meqref{eq:dia-coproduct}, respectively.
\end{theorem}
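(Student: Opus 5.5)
The proof will go entirely through Manin triples, chaining the earlier equivalences rather than checking the Leibniz bialgebra identities by hand. First, since $(A,\dva,\vda,\Delta_{\dva},\Delta_{\vda})$ is a \dabi, Corollary~\mref{coro:equiv} gives a standard Manin triple of \das
$$((A\oplus A^*,\dashv_d,\vd_d,\omega_d),(A,\dva,\vda),(A^*,\dvs,\vds)).$$
Next, Proposition~\mref{prop:dia-manin-lei} turns this into a standard Manin triple of Leibniz algebras
$$((A\oplus A^*,\circ_d,\omega_d),(A,\circ_A),(A^*,\circ_{A^*})),$$
where $\circ_d,\circ_A,\circ_{A^*}$ are all defined by \eqref{eq:dil}. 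In particular, $a^*\circ_{A^*}b^*=a^*\vds b^*-b^*\dvs a^*$.

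Applying Theorem~\mref{thm:Lei-equiv} to this Leibniz Manin triple then gives a Leibniz bialgebra $(A,\circ_A,\Delta_A)$, where $\Delta_A$ is the linear dual of $\circ_{A^*}$. The remaining step is to show $\Delta_A=\Delta$, with $\Delta$ as in \meqref{eq:dia-coproduct}. For $x\in A$ and $a^*,b^*\in A^*$,
$$\langle \Delta_A(x),a^*\ot b^*\rangle=\langle x,a^*\vds b^*\rangle-\langle x,b^*\dvs a^*\rangle=\langle \Delta_{\vda}(x),a^*\ot b^*\rangle-\langle \Delta_{\dva}(x),b^*\ot a^*\rangle.$$
The last term equals $\langle\sigma\Delta_{\dva}(x),a^*\ot b^*\rangle$, so $\Delta_A=\Delta_{\vda}-\sigma\Delta_{\dva}=\Delta$.

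The main thing to check is that the conventions agree. The form $\omega_d$ in Definition~\mref{defn:Leibniz} is the same as \eqref{eq:omd} used for diassociative Manin triples. Also, Theorem~\mref{thm:Lei-equiv} is stated for the left Leibniz convention with invariance \eqref{eq:ql}, and this is exactly what Proposition~\mref{prop:dia-ind-Lei} produces. Since Proposition~\mref{prop:dia-manin-lei} keeps the same form and the same subalgebras, the standard Manin triple passes through unchanged. One could also verify the two Leibniz bialgebra identities directly from \eqref{eq:bi2}--\eqref{eq:bi10}, but that calculation is long and not needed.
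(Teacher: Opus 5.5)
Your proposal is correct and follows the paper's proof: it chains Corollary~\ref{coro:equiv}, Proposition~\ref{prop:dia-manin-lei} and Theorem~\ref{thm:Lei-equiv} in the same order. You also check that the linear dual of $\circ_{A^*}$ equals $\Delta_{\vda}-\sigma\Delta_{\dva}$, which is the identification with \eqref{eq:dia-coproduct} that the paper leaves implicit.
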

\begin{proof} This follows from Corollary~\mref{coro:equiv}, Proposition~\mref{prop:dia-manin-lei} and Theorem~\mref{thm:Lei-equiv}.
\end{proof}

\subsection{Construction of Lie bialgebras via diassociative bialgebras and quadratic dendriform algebras}

\begin{defn}
A {\bf dendriform algebra} is a triple $(A, \prec_{A}, \succ_{A})$ consisting of a vector space $A$ and two bilinear operations $\prec_{A}, \succ_{A}: A \otimes A \to A$ such that for all $x, y, z \in A$:
\begin{eqnarray}\mlabel{eq:dendr}
&&(x \prec_{A} y) \prec_{A} z = x \prec_{A} (y \ast_{A} z), \quad\\
&&(x \succ_{A} y) \prec_{A} z = x \succ_{A} (y \prec_{A} z), \quad\\
&&(x \ast_{A} y) \succ_{A} z = x \succ_{A} (y \succ_{A} z),
\end{eqnarray}
where 
\begin{equation}\mlabel{eq:ast}
x \ast_{A} y := x \prec_{A} y + x \succ_{A} y.
\end{equation}
\end{defn}
For any dendriform algebra $(A,\prec_{A},\succ_{A})$,  it is well known \cite{Lo1} that the product $\ast_{A}=\prec_{A}+\succ_{A}$ defines an associative algebra $(A, \ast_{A})$, which is called the {\bf sub-adjacent associative algebra} of $(A,\prec_{A},\succ_{A})$. Conversely,  $(A,\prec_{A},\succ_{A})$ is called the {\bf compatible dendriform algebra} of $(A,\ast_{A})$.

\begin{defn}
\cite{Bai,Cha05}
 A {\bf quadratic dendriform algebra} is a dendriform algebra $(A,\prec_{A}, \succ_{A})$ equipped with a nondegenerate antisymmetric bilinear form $\omega$  that is {\bf invariant} in the following sense: for all $ x,y,z\in A$,
\begin{equation}\mlabel{eq:qda}
\omega(x\prec_{A} y,z)=\omega(x,y\ast_{A} z),\qquad\omega(x\succ_{A} y,z)=\omega(y,z\ast_{A} x).
\end{equation}
\end{defn}
\begin{prop}\mlabel{prop:dend-ass}\cite{Bai}
	Let $(A, \prec_{A}, \succ_{A},\omega)$ be a quadratic dendriform algebra. Then the bilinear form $\omega$ induces a Connes cocycle on the sub-adjacent associative algebra $(A, \ast_{A})$, where $\ast_{A}$ is given by ~\meqref{eq:ast}.
	Conversely, let $(A, \ast_{A})$ be an associative algebra equipped with a nondegenerate Connes cocycle $\omega$. Then there exists a compatible quadratic dendriform algebra structure $(A, \prec_{A}, \succ_{A})$ on $A$, where the operations $\prec_{A}, \succ_{A}$ are uniquely determined by  \meqref{eq:qda}.
	Hence there is a one-to-one correspondence between quadratic dendriform algebras and associative algebras with nondegenerate Connes cocycles.
\end{prop}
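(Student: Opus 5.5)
The statement to prove is Proposition~\mref{prop:dend-ass}, cited from \cite{Bai}. It splits into a forward direction, a converse, and the bijection.

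\emph{Forward direction.} Assume $(A,\prec_A,\succ_A,\omega)$ is a quadratic dendriform algebra. I want the cyclic identity
\[
\omega(x\ast_A y,z)+\omega(y\ast_A z,x)+\omega(z\ast_A x,y)=0,
\]
which is what it means for $\omega$ to be a Connes cocycle for $\ast_A$.

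\begin{enumerate}
\item Expand $\omega(x\ast_A y,z)=\omega(x\prec_A y,z)+\omega(x\succ_A y,z)$.
\item Apply \meqref{eq:qda} to each term. This gives $\omega(x\ast_A y,z)=\omega(x,y\ast_A z)+\omega(y,z\ast_A x)$.
\item Use antisymmetry to rewrite the right side as $-\omega(y\ast_A z,x)-\omega(z\ast_A x,y)$. This is exactly the cocycle identity.
\end{enumerate}

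\emph{Converse: definition of the operations.} Let $\omega$ be a nondegenerate Connes cocycle on $(A,\ast_A)$. By nondegeneracy I define $\prec_A$ and $\succ_A$ uniquely by
\[
\omega(x\prec_A y,z):=\omega(x,y\ast_A z),\qquad \omega(x\succ_A y,z):=\omega(y,z\ast_A x).
\]
This forces uniqueness.

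\emph{Converse: $\prec_A+\succ_A=\ast_A$.} Pair the sum with $z$:
\[
\omega(x\prec_A y+x\succ_A y,z)=\omega(x,y\ast_A z)+\omega(y,z\ast_A x).
\]
By antisymmetry and the cocycle identity, the right side equals $\omega(x\ast_A y,z)$. Nondegeneracy then gives $\prec_A+\succ_A=\ast_A$, so the notation $\ast_A$ in \meqref{eq:ast} is consistent.

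\emph{Converse: the three dendriform axioms.} For each axiom I pair both sides with an arbitrary $w$ and move everything onto expressions involving only $\ast_A$, using the defining relations.
\begin{itemize}
\item First axiom: $\omega((x\prec_A y)\prec_A z,w)=\omega(x\prec_A y,z\ast_A w)=\omega(x,y\ast_A(z\ast_A w))$. Also $\omega(x\prec_A(y\ast_A z),w)=\omega(x,(y\ast_A z)\ast_A w)$. These agree by associativity of $\ast_A$.
\item Second axiom: $\omega((x\succ_A y)\prec_A z,w)=\omega(x\succ_A y,z\ast_A w)=\omega(y,(z\ast_A w)\ast_A x)$. Also $\omega(x\succ_A(y\prec_A z),w)=\omega(y\prec_A z,w\ast_A x)=\omega(y,z\ast_A(w\ast_A x))$. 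These agree by associativity.
\item Third axiom: $\omega((x\ast_A y)\succ_A z,w)=\omega(z,w\ast_A(x\ast_A y))$. Also $\omega(x\succ_A(y\succ_A z),w)=\omega(y\succ_A z,w\ast_A x)=\omega(z,(w\ast_A x)\ast_A y)$. These agree by associativity.
\end{itemize}
The defining relations are \meqref{eq:qda}, so $\omega$ is invariant and $(A,\prec_A,\succ_A,\omega)$ is quadratic.

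\emph{Bijection.} The two constructions are mutually inverse. Starting from a quadratic dendriform algebra, the operations are recovered uniquely from $\ast_A$ and $\omega$ via \meqref{eq:qda} and nondegeneracy. Starting from a Connes cocycle, the sum of the new operations returns $\ast_A$.

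\emph{Expected obstacle.} The only delicate step is the converse identity $\prec_A+\succ_A=\ast_A$, since that is the one place the cocycle condition is used. The remaining steps are bookkeeping with antisymmetry and associativity.
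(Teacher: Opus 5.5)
Your proof is correct and complete. Forward, you sum the two invariance relations to get the cyclic identity. Conversely, you define $\prec_A,\succ_A$ by nondegeneracy, recover $\prec_A+\succ_A=\ast_A$ from the cocycle identity, and check the three dendriform axioms using associativity; you then confirm that the two constructions are mutually inverse. Each step checks out, including the pairings in the dendriform axioms.

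The paper itself gives no proof of this proposition; it only cites Bai. So there is no internal argument to compare against, and yours supplies one.

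An alternative route, in the spirit of the $\mathcal{O}$-operator machinery of Section~\ref{sec:4}, goes as follows.
\begin{itemize}
\item Let $\varphi\colon A\to A^*$ be given by $\langle\varphi(x),y\rangle=\omega(x,y)$.
\item The cocycle identity is exactly $\varphi(x\ast_A y)=R^*_{\ast_A}(x)\varphi(y)+L^*_{\ast_A}(y)\varphi(x)$. That is, $\varphi^{-1}$ is an invertible $\mathcal{O}$-operator of $(A,\ast_A)$ associated to the coregular bimodule $(A^*,R^*_{\ast_A},L^*_{\ast_A})$.
\item Such an operator induces the compatible dendriform structure $x\succ_A y=\varphi^{-1}(R^*_{\ast_A}(x)\varphi(y))$ and $x\prec_A y=\varphi^{-1}(L^*_{\ast_A}(y)\varphi(x))$. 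Pairing with $z$ shows this is precisely \eqref{eq:qda}.
\end{itemize}

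That route explains where the formulas \eqref{eq:qda} come from. It also fits the general principle, used in this paper for pre-diassociative algebras, that invertible $\mathcal{O}$-operators produce compatible structures. However, it presupposes that general result.

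Your direct computation is self-contained. It also makes visible that the cocycle condition enters only through the identity $\prec_A+\succ_A=\ast_A$. The dendriform axioms themselves follow from associativity of $\ast_A$ and the defining relations alone.
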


Recall that a {\bf quadratic Lie algebra} is a Lie algebra $(\frak g,[-,-]_{\frak g})$ together with a nondegenerate symmetric bilinear form $\mathcal{B}$ which is invariant in the sense that
\begin{equation*}
	\mathcal{B}([x,y]_{\frak g},z)=\mathcal{B}(x,[y,z]_{\frak g}),\; x,y,z\in \frak g.
\end{equation*}

\begin{prop}\mlabel{prop:dend-dia}
Let $(A,\prec_{A},\succ_{A})$ be a dendriform algebra and $(B,\dashv_{B},\vdb)$  a diassociative algebra. Then the tensor product $\frak g=A\ot B$ of $A$ and $B$ admits a Lie algebra structure with the bracket defined by
	\begin{align}\mlabel{eq:xyab}
	[x\otimes a,y\otimes b]_{\frak g}:=x\succ_{A}y\otimes a\vdb b+x\prec_{A}y\otimes a\dashv_{B}b-y\succ_{A}x\otimes b\vdb a-y\prec_{A}x\otimes b\dashv_{B}a.
	\end{align}
In particular, if there exists bilinear forms $\omega_{1},\omega_{2}$ such that  $(A,\succ_{A},\prec_{A},\omega_{1})$ is a quadratic dendriform algebra  and   
$(B,\dashv_{B},\vdb,\omega_{2})$ is a  quadratic  diassociative algebra, then there is a 
 a quadratic  Lie algebra $(\frak g,[-,-]_{\frak g},\mathcal{B} )$, where
\begin{eqnarray}\mlabel{eq:oxy}
	\mathcal{B}(x\otimes a,y\otimes b)=\omega_{1}(x,y)\omega_{2}(a,b),\quad x,y\in A, a,b\in B.
\end{eqnarray}
\end{prop}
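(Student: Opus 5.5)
We prove the two assertions of Proposition~\mref{prop:dend-dia} in turn, both by direct computation on pure tensors. For the Lie structure, antisymmetry of $[-,-]_{\frak g}$ is immediate from \meqref{eq:xyab}, since the last two terms are the first two with $(x,a)$ and $(y,b)$ exchanged and a minus sign. Only the Jacobi identity needs work.

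For Jacobi, we rewrite the bracket as $[X,Y]_{\frak g}=X\circ Y-Y\circ X$, where on $A\ot B$ we set
$$(x\ot a)\circ(y\ot b):=x\succ_A y\ot a\vdb b+x\prec_A y\ot a\dashv_B b.$$
This is the white (Manin) product of the dendriform and diassociative operads, which are Koszul dual. The product $\circ$ should then be associative, since $Dias^{!}=Dend$ and $\mathcal P\otimes\mathcal P^{!}$ carries the associative structure. We verify this directly. Expanding $((x\ot a)\circ(y\ot b))\circ(z\ot c)$ gives four terms, with $A$-parts $(x\succ y)\succ z$, $(x\prec y)\succ z$, $(x\succ y)\prec z$, $(x\prec y)\prec z$ paired with $B$-parts $(a\vdb b)\vdb c$, $(a\dashv_B b)\vdb c$, $(a\vdb b)\dashv_B c$, $(a\dashv_B b)\dashv_B c$. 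By \meqref{eq:dia3}, the two $\vdb c$ terms have equal $B$-parts $a\vdb(b\vdb c)$, so they combine into $(x\ast_A y)\succ z\ot a\vdb(b\vdb c)=x\succ(y\succ z)\ot a\vdb(b\vdb c)$. The term $(x\succ y)\prec z\ot(a\vdb b)\dashv_B c$ becomes $x\succ(y\prec z)\ot a\vdb(b\dashv_B c)$ by \meqref{eq:dia2}. The term $(x\prec y)\prec z=x\prec(y\ast z)$ splits into two pieces, and \meqref{eq:dia1} matches its $B$-part with both $a\dashv_B(b\dashv_B c)$ and $a\dashv_B(b\vdb c)$. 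Collecting, the result equals $(x\ot a)\circ((y\ot b)\circ(z\ot c))$. An associative product yields a Lie bracket by commutator, so Jacobi follows.

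For the quadratic part, $\mathcal B$ is symmetric, because it is a product of two antisymmetric forms. It is nondegenerate, being the tensor product of nondegenerate forms. For invariance, we use the fact that invariance of $\mathcal B$ for the commutator bracket follows if $\mathcal B$ satisfies a cyclic-type identity $\mathcal B(X\circ Y,Z)=\mathcal B(X,Y\circ Z)$ up to the appropriate signs. Concretely, we compute $\mathcal B([x\ot a,y\ot b],z\ot c)$ as four products $\omega_1(\cdot,z)\omega_2(\cdot,c)$. We then rewrite each factor with \meqref{eq:qda} and with \meqref{eq:omega1}, \meqref{eq:omega2}. This moves everything into the form $\omega_1(\cdot,\cdot\, ?\, z)$ or $\omega_1(x,\ldots)$, and similarly for $\omega_2$. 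The aim is to show that this equals $\mathcal B(x\ot a,[y\ot b,z\ot c])$ after expanding the latter into its four terms.

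The main obstacle is this bookkeeping. The dendriform invariance \meqref{eq:qda} produces $\ast_A$, and the diassociative invariance produces $\ob_B=\vdb-\dashv_B$. The cross terms therefore have to be regrouped, splitting $\ast_A=\prec_A+\succ_A$ and $\ob_B$, and cancelled pairwise using antisymmetry of both forms. We plan to organize the eight resulting terms by which of $x,y,z$ sits alone in the $\omega_1$ slot, and to check the cancellation within each group.
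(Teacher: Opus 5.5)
Your proposal is correct and follows the paper's route. For the Lie structure the paper only appeals to Loday, and your check that $\circ$ is associative, with commutator equal to the given bracket, is a correct self-contained version of that argument; for invariance you plan the same direct expansion via \eqref{eq:qda}, \eqref{eq:omega1}, \eqref{eq:omega2} and antisymmetry that the paper carries out. Your aside closes the bookkeeping at once, with no signs needed: since $\omega_1(x\succ_A y,z)=-\omega_1(x,y\prec_A z)$, $\omega_1(x\prec_A y,z)=\omega_1(x,y\ast_A z)$, $\omega_2(a\vdb b,c)=\omega_2(a,b\vdb c)-\omega_2(a,b\dashv_B c)$ and $\omega_2(a\dashv_B b,c)=\omega_2(a,b\vdb c)$, one gets $\mathcal B(X\circ Y,Z)=\mathcal B(X,Y\circ Z)$ exactly, and together with the symmetry of $\mathcal B$ this yields $\mathcal B([X,Y]_{\frak g},Z)=\mathcal B(X,[Y,Z]_{\frak g})$.
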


\begin{proof}
Similar to the proof of ~\cite[Proposition~5.3, P37]{Lo1}, $(\frak g,[-,-]_{\frak g})$ is a Lie algebra.
It suffices to verify that the Lie bracket given by ~\meqref{eq:xyab} and $\mathcal{B}$ defined by ~\meqref{eq:oxy} satisfy
\begin{equation}\mlabel{eq:invar}
\mathcal{B}([x\ot a,y\ot b]_{\frak g},z\ot c)=\mathcal{B}(x\ot a,[y\ot b,c\ot z]_{\frak g}),\quad x,y,z\in A, a,b,c\in B.
\end{equation}
By ~\meqref{eq:omega} and \meqref{eq:qda}, we have
\begin{align*}
&\mathcal{B}([x\ot a,y\ot b]_{\frak g},z\ot c)\\
&=\mathcal{B}(x\succ_{A}y\otimes a\vdb b+x\prec_{A}y\otimes a\dashv_{B}b-y\succ_{A}x\otimes b\vdb a-y\prec_{A}x\otimes b\dashv_{B}a, z\ot c)\\
&\stackrel{\meqref{eq:oxy}}{=}\omega_1(x\succ_A y,z)\omega_2(a\vdb b,c)+\omega_1(x\prec_A y,z)\omega_2(a\dashv_B b,c)\\
&\quad-\omega_1(y\succ_A x, z)\omega_2( b\vdb a, c)-\omega_1(y\prec_A x,z)\omega_2(b\dashv_B a, c)\\
&\stackrel{\meqref{eq:omega}\meqref{eq:qda}}{=}-\omega_1(x,y\prec_A z)\omega_1(a\vdb b,c)+(\omega_1(x,y\prec_A z)+\omega_1(x,y\succ_A z))\omega_2(a\dashv_B b,c)\\
&+\omega_1(z,y\succ_A x)\omega_2(a, c\dashv_B b)+\omega_1(z,y\prec_A x)(\omega_2(c\vdb b,a)-\omega_2(c\dashv_B b,a)).
\end{align*}
On the other hand, we get
\begin{align*}
&\mathcal{B}(x\ot a,[y\ot b,z\ot c]_{\frak g})\\
&=\mathcal{B}(x\ot a, y\succ_{A}z\otimes b\vdb c+y\prec_{A}z\otimes b\dashv_{B}c
-z\succ_{A}y\otimes c\vdb b-z\prec_{A}y\otimes c\dashv_{B}b)\\
&\stackrel{\meqref{eq:oxy}}{=}\omega_1(x,y\succ_A z)\omega_2(a, b\vdb c)+\omega_1(x,y\prec_A z)\omega_2(a, b\dashv_B c)\\
&\quad-\omega_1(x,z\succ_A y)\omega_2(a, c\vdb b)-\omega_1(x,z\prec_A y)\omega_2(a, c\dashv_B b)\\
&\stackrel{\meqref{eq:omega}\meqref{eq:qda}}{=}\omega_1(x,y\succ_A z)\omega_2(a\dashv_B b, c)+\omega_1(x,y\prec_A z)(-\omega_2(a\vdb b,c)+\omega_2(a\dashv_B b,c))\\
&+\omega_1(z, y\prec_A x)\omega_2( c\vdb b,a)+(\omega_1(z,y\prec_A x)+\omega_1(z,y\succ_A x))\omega_2(a, c\dashv_B b).
\end{align*}
By comparing the two equations above, we obtain ~\meqref{eq:invar}.
\end{proof}

\begin{defn}\mlabel{defn:triple-Lie}\mcite{CP94}
	A {\bf Manin triple of Lie algebras} is a collection $((\frak g\oplus\frak h,[-,-]_{d},\mathcal{B}),(\frak g$,
	$
	[-,-]_{\frak g}), 
(\frak h,[-,-]_{\frak h}))$ such that $(\frak g\oplus\frak h,[-,-]_{d},\mathcal{B})$  is a quadratic Lie algebra, $(\frak g,[-,-]_{\frak g}),(\frak h,[-,-]_{\frak h})$ are Lie subalgebras of $(\frak g\oplus\frak h,[-,-]_{d})$ and $\frak g,\frak h$ are isotropic with respect to $\mathcal{B}$.
		A Manin triple of Lie algebras is called {\bf standard} if $\frak h=\frak g^{*}$ and $\mathcal{B}=\mathcal{B}_{s}$ is given by
		\begin{equation}\mlabel{2250}
			\mathcal{B}_{s}(x+a^{*},y+b^{*})=\langle x,b^{*}\rangle+\langle a^{*},y\rangle,\; x,y\in\frak g, a^{*},b^{*}\in \frak g^{*}.
		\end{equation}
\end{defn}

	Two Manin triples of Lie algebras $((\frak g\oplus\frak h,[-,-]_{d},\mathcal{B}),(\frak g,[-,-]_{\frak g}),(\frak h,[-,-]_{\frak h}))$ and $((\frak g'\oplus\frak h'$,
	$[-,-]'_{d},\mathcal{B}'), 
(\frak g',[-,-]_{\frak g'}),(\frak h',[-,-]_{\frak h'}))$ are called {\bf isomorphic} if there exists an isomorphism $f:\frak g\oplus\frak h\rightarrow\frak g'\oplus\frak h'$ of Lie algebras such that
\begin{equation*}
	f(\frak g)\subset \frak g',\;
	f(\frak h)\subset \frak h',\;
	\mathcal{B}'(f(x+a),f(y+b))=\mathcal{B}(x+a,y+b),\; x,y\in\frak g, a,b\in\frak h.
\end{equation*} 
As proved in \mcite{CP94}, every Manin triple of Lie algebras is isomorphic to a standard one.
\begin{defn}\mcite{CP94} A {\bf Lie bialgebra} is a triple $(\mathfrak{g},[-,-]_{\frak g},\delta)$ such that $(\mathfrak{g},[-,-]_{\frak g})$ is a Lie algebra,
	$(\mathfrak{g},\delta)$ is a Lie coalgebra, and
	$\delta$ is a 1-cocycle of $\mathfrak{g}$ with values in $\mathfrak{g}\otimes \mathfrak{g}$, that is,
	\begin{equation*}
		\delta([x,y]_{\frak g})=(\mathrm{ad}_{\frak g}(x)\otimes \mathrm{id}+\mathrm{id}\otimes\mathrm{ad}_{\frak g}(x))\delta(y)-(\mathrm{ad}_{\frak g}(y)\otimes \mathrm{id}+\mathrm{id}\otimes\mathrm{ad}_{\frak g}(y))\delta(x),\quad x,y\in \mathfrak{g}.
	\end{equation*}
\end{defn}

\begin{theorem}\mcite{CP94}\mlabel{thm:Liebia}
Let $(\frakg,[-,-]_\frakg)$ and $(\frakg^*, [-,-]_{\frakg^*})$ be Lie algebras. Let $\delta$ be the linear dual of $[-,-]_{\frakg^*}$.
Then the following conditions are equivalent:
\begin{enumerate}
\item There is a standard Manin triple of Lie algebras 
$((\mathfrak{g}\oplus \mathfrak{g}^{*},[-,-]_{d}, \mathcal{B}_{s}),(\mathfrak{g},[-,-]_{\frak g}),(\mathfrak{g}^{*}$,
$[-,-]_{\frak g^{*}}))$.
\item
$(\frakg, [,]_\frakg,\delta)$ is a Lie bialgebra.
\end{enumerate}
\end{theorem}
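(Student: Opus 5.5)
The plan is the standard matched-pair argument, parallel to the proofs of Theorem~\mref{thm:Manin} and Theorem~\mref{thm:matched} in the diassociative setting. I would show that both conditions are equivalent to the statement that $(\frakg,\frakg^*,\mathrm{ad}^*_{\frakg},\mathrm{ad}^*_{\frakg^*})$ is a matched pair of Lie algebras. Here $\mathrm{ad}^*$ is the coadjoint representation, defined by $\langle \mathrm{ad}^*_{\frakg}(x)a^*,y\rangle=-\langle a^*,[x,y]_\frakg\rangle$.

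\emph{Step 1: the bracket on a standard Manin triple is forced.} Assume $((\frakg\oplus\frakg^*,[-,-]_d,\mathcal{B}_s),\frakg,\frakg^*)$ is a standard Manin triple. Take $x\in\frakg$, $a^*\in\frakg^*$ and pair $[x,a^*]_d$ with $y+b^*$. Invariance of $\mathcal{B}_s$, together with isotropy of $\frakg$ and $\frakg^*$, determines the pairing, exactly as in the ($\Leftarrow$) part of the proof of Theorem~\mref{thm:Manin}. This gives
\begin{equation*}
[x,a^*]_d=\mathrm{ad}^*_{\frakg}(x)a^*-\mathrm{ad}^*_{\frakg^*}(a^*)x .
\end{equation*}
Conversely, start from two Lie algebras $\frakg,\frakg^*$ and define $[-,-]_d$ on $\frakg\oplus\frakg^*$ by this formula on mixed terms, extended by antisymmetry. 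A direct pairing computation then shows that $\mathcal{B}_s$ is symmetric, invariant and nondegenerate, and that $\frakg,\frakg^*$ are isotropic. So the only remaining issue is when this bracket satisfies the Jacobi identity.

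\emph{Step 2: split the Jacobi identity.} The Jacobi identity on triples from $\frakg$ alone, or from $\frakg^*$ alone, holds by assumption. The two mixed types are $(x,y,a^*)$ and $(x,a^*,b^*)$. For each type I would split the Jacobi identity into its $\frakg$-component and its $\frakg^*$-component. The component lying in the ``same'' space as the representation holds automatically, because $\mathrm{ad}^*$ is a representation. This leaves two compatibility conditions, analogous to the conditions in Lemma~\mref{lem:match}.

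\emph{Step 3: translate to the cocycle condition.} Let $\delta$ be the dual of $[-,-]_{\frakg^*}$. Pairing the $\frakg^*$-component of the $(x,y,a^*)$ Jacobi identity with $b^*$ and dualizing, I expect to obtain exactly
\begin{equation*}
\delta([x,y]_\frakg)=(\mathrm{ad}_\frakg(x)\ot\id+\id\ot\mathrm{ad}_\frakg(x))\delta(y)-(\mathrm{ad}_\frakg(y)\ot\id+\id\ot\mathrm{ad}_\frakg(y))\delta(x).
\end{equation*}
Because $(\frakg^*,[-,-]_{\frakg^*})$ is a Lie algebra, $(\frakg,\delta)$ is automatically a Lie coalgebra. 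Hence the Jacobi identity for $[-,-]_d$ is equivalent to the bialgebra axioms, and this gives both directions of the equivalence.

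\emph{Main obstacle.} The delicate step is showing that the $(x,a^*,b^*)$ compatibility condition does not impose anything beyond the cocycle condition. I would handle this by pairing its $\frakg$-component with some $y\in\frakg$ and using invariance of $\mathcal{B}_s$ to move the brackets around. The goal is to show that it becomes literally the same identity as the dualized $(x,y,a^*)$ condition, evaluated on $a^*\ot b^*$. This mirrors the symmetric pairing of the conditions \meqref{eq:ppd1}$\Leftrightarrow$\meqref{eq:ppd14}, etc., in the proof of Theorem~\mref{thm:matched}. The part that needs care is the sign bookkeeping in the coadjoint terms.
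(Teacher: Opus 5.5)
Your proposal is correct and is the standard argument. The paper gives no proof of this statement; it simply quotes it from \cite{CP94}. Your route (the mixed bracket is forced by invariance of $\mathcal{B}_s$, then the Jacobi identity on $\frakg\oplus\frakg^*$ reduces to matched-pair compatibilities, which dualize to the 1-cocycle condition) is the one used there. It is also the pattern the paper follows in Theorems~\mref{thm:Manin} and~\mref{thm:matched}.

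\textbf{Swapped components.} In Step 3 and in your last paragraph the components are swapped, which contradicts your own Step 2. For the triple $(x,y,a^*)$, the $\frakg^*$-component of the Jacobiator is the automatic one: it is the representation property of $\mathrm{ad}^*_{\frakg}$, and it cannot be paired with $b^*$. The nontrivial one is the $\frakg$-component
\begin{equation*}
\mathrm{ad}^*_{\frakg^*}(a^*)[x,y]_\frakg-[x,\mathrm{ad}^*_{\frakg^*}(a^*)y]_\frakg+[y,\mathrm{ad}^*_{\frakg^*}(a^*)x]_\frakg-\mathrm{ad}^*_{\frakg^*}(\mathrm{ad}^*_{\frakg}(y)a^*)x+\mathrm{ad}^*_{\frakg^*}(\mathrm{ad}^*_{\frakg}(x)a^*)y .
\end{equation*}
Its pairing with $b^*$ is, up to an overall sign, the cocycle identity evaluated on $a^*\ot b^*$. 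Symmetrically, for $(x,a^*,b^*)$ the $\frakg$-component is automatic (it is the representation property of $\mathrm{ad}^*_{\frakg^*}$). It is the $\frakg^*$-component that you must pair with $y\in\frakg$.

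\textbf{The ``main obstacle''.} This step needs no sign bookkeeping. Your bracket is antisymmetric, and $\mathcal{B}_s$ is symmetric and invariant for it before the Jacobi identity is known. Let $J$ be the Jacobiator and set $\Phi(u,v,w,t):=\mathcal{B}_s(J(u,v,w),t)$. Then $\Phi$ is totally antisymmetric in its four arguments. Consequently $\mathcal{B}_s(J(x,a^*,b^*),y)=\mathcal{B}_s(J(x,y,a^*),b^*)$, so the two mixed conditions are literally the same. The same identity gives $\mathcal{B}_s(J(x,y,a^*),z)=-\mathcal{B}_s(J(x,y,z),a^*)=0$, which explains why the remaining components vanish.
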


\begin{prop}\mlabel{2283}
	Let $(A,\prec_{A},\succ_{A},\omega)$ be a quadratic dendriform algebra. Let $(B,\dashv_{B},\vdb)$ and $(B^{*},\dashv_{B^{*}},\vdbs)$ be diassociative algebras, and $(A\otimes B,[-,-]_{A\otimes B})$ and $(A\otimes B^{*},[-,-]_{A\otimes B^{*}})$ be the   Lie algebras induced from  $(A,\prec_{A},\succ_{A} )$ and  $(B,\dashv_{B},\vdb)$
	 as well as $(A,\prec_{A},\succ_{A} )$ and  $(B^{*},\dashv_{B^{*}},\vdbs)$ respectively.
	 If there is a standard Manin triple of diassociative algebras $( (B\oplus B^{*},\dashv_{d},\vd_{d},\omega_{d}),  (B,\dashv_{B},\vdb),(B^{*},\dashv_{B^{*}},\vdbs) )$, then there is a Manin triple of Lie algebras
	 $((A\otimes (B\oplus B^{*}),[-,-]_{d},\mathcal{B}), (A\otimes B,[-,-]_{A\otimes B}), (A\otimes B^{*},[-,-]_{A\otimes B^{*}}))$, where 
	 \begin{eqnarray*}
&&[x\otimes a+y\otimes b^{*}, z\otimes c+w\otimes d^{*}]_{d}\\
&=&x\succ_{A} z\otimes a\vdb c+x\prec_{A}z\otimes a\dashv_{B}c
-z\succ_{A} x\otimes c\vdb a-z\prec_{A}x\otimes c\dashv_{B}a\\
&&+x\succ_{A} w\otimes a\vd_{d}d^{*}+x\prec_{A}w\otimes a\dashv_{d}d^{*}
-w\succ_{A} x\otimes d^{*}\vd_{d}a-w\prec_{A}x\otimes d^{*}\dashv_{d}a\\
&&+y\succ_{A}z\otimes b^{*}\vd_{d}c+y\prec_{A}z\otimes b^{*}\dashv_{d}c-z\succ_{A}y\otimes c\vd_{d}b^{*}-z\prec_{A}y\otimes c\dashv_{d}b^{*}\\
&&+y\succ_{A}w\otimes b^{*}\vdbs d^{*}+y\prec_{A}w\otimes b^{*}\dashv_{B^{*}}d^{*}
-w\succ_{A}y\otimes d^{*}\vdbs b^{*}-w\prec_{A}y\otimes d^{*}\dashv_{B^{*}}b^{*}
	 \end{eqnarray*}
 and 
 \begin{equation*}
 	\mathcal{B}(x\otimes a+y\otimes b^{*},z\otimes c+w\otimes d^{*})=\omega(y,z)\langle b^{*},c\rangle-\omega(x,w)\langle a,d^{*}\rangle,
 \end{equation*}
for all $x,y,z,w\in A, a,c\in B, b^{*},d^{*}\in B^{*}$.
	\end{prop}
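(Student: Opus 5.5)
The plan is to apply Proposition~\mref{prop:dend-dia} to the quadratic dendriform algebra $(A,\prec_A,\succ_A,\omega)$ and the quadratic diassociative algebra $(B\oplus B^*,\dashv_d,\vd_d,\omega_d)$ coming from the standard Manin triple. This produces a quadratic Lie algebra on $A\otimes(B\oplus B^*)$. After that, only the subalgebra and isotropy conditions in Definition~\mref{defn:triple-Lie} remain to be checked.

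\emph{Step 1.} Identify $A\otimes(B\oplus B^*)$ with $(A\otimes B)\oplus(A\otimes B^*)$. Bracket~\meqref{eq:xyab} for the diassociative algebra $(B\oplus B^*,\dashv_d,\vd_d)$ is bilinear, so expanding $[x\otimes a+y\otimes b^*,z\otimes c+w\otimes d^*]$ gives exactly the four groups of terms in the displayed formula for $[-,-]_d$. The $A\otimes B$ and $A\otimes B^*$ groups use $\dashv_B,\vdb$ and $\dashv_{B^*},\vdbs$, because $B$ and $B^*$ are diassociative subalgebras of $B\oplus B^*$. Proposition~\mref{prop:dend-dia} then shows that $[-,-]_d$ is a Lie bracket. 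The same restriction shows that $A\otimes B$ and $A\otimes B^*$ are closed under $[-,-]_d$, with induced brackets $[-,-]_{A\otimes B}$ and $[-,-]_{A\otimes B^*}$. Hence both are Lie subalgebras.

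\emph{Step 2.} Proposition~\mref{prop:dend-dia} gives the invariant form $\mathcal{B}(x\otimes u,z\otimes v)=\omega(x,z)\omega_d(u,v)$. By~\meqref{eq:omd}, $\omega_d(a+b^*,c+d^*)=\langle b^*,c\rangle-\langle a,d^*\rangle$. Substituting and expanding gives
$$\mathcal{B}(x\otimes a+y\otimes b^*,z\otimes c+w\otimes d^*)=\omega(y,z)\langle b^*,c\rangle-\omega(x,w)\langle a,d^*\rangle,$$
which is the stated form. It is nondegenerate because it is a tensor product of nondegenerate forms in finite dimensions. It is symmetric because both factors are antisymmetric: $\omega(z,x)\omega_d(v,u)=\omega(x,z)\omega_d(u,v)$. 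Invariance is~\meqref{eq:invar}.

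\emph{Step 3.} For isotropy, $\omega_d$ vanishes on $B\times B$ and on $B^*\times B^*$, so $\mathcal{B}$ vanishes on $A\otimes B$ and on $A\otimes B^*$.

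The main obstacle is making sure Proposition~\mref{prop:dend-dia} really gives invariance of $\mathcal{B}$ in the form needed. Its displayed identity~\meqref{eq:invar} contains an apparent typo ($c\otimes z$ in place of $z\otimes c$), and the sign conventions of~\meqref{eq:omega} for $\omega_d$ must match those used in that proof. If there is any doubt, I would recheck~\meqref{eq:invar} directly on homogeneous tensors. Each of the four terms of the bracket is paired by~\meqref{eq:qda} with the corresponding term from~\meqref{eq:omega1}--\meqref{eq:omega2}, as in the computation already given. Everything else is bookkeeping.
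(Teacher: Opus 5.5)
Your proposal is correct and is essentially the paper's argument: the paper proves this in one line by applying Proposition~\mref{prop:dend-dia} to $A$ and the quadratic diassociative algebra $(B\oplus B^*,\dashv_d,\vd_d,\omega_d)$, and it leaves implicit the subalgebra, symmetry/nondegeneracy and isotropy checks that you spell out. You are also right that \meqref{eq:invar} should read $z\ot c$ in place of $c\ot z$; with that correction the invariance does hold, so no further verification is needed.
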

\begin{proof}
	It follows directly from Proposition \mref{prop:dend-dia}.
\end{proof}

Keep the same  assumptions in Proposition \mref{2283}.
By the nondegeneracy of $\omega$ in the quadratic dendriform algebra $(A,\prec_{A},\succ_{A},\omega)$, there is a linear isomorphism $\varphi:A\rightarrow A^{*}$ defined by
\begin{equation}\mlabel{2322}
	\langle \varphi(x),y\rangle=\omega(x,y),\; x,y\in A.
\end{equation}
Thus there is a dendriform algebra $(A^{*},\prec_{A^{*}},\succ_{A^{*}})$ defined by 
\begin{equation*}
	a^{*}\prec_{A^{*}}b^{*}=\varphi( \varphi^{-1}(a^{*})\prec_{A} \varphi^{-1}(b^{*})  ),\;
		a^{*}\succ_{A^{*}}b^{*}=\varphi( \varphi^{-1}(a^{*})\succ_{A} \varphi^{-1}(b^{*})  ),\; a^{*},b^{*}\in A^{*}.
\end{equation*}
The two dendriform algebras $(A,\prec_{A},\succ_{A})$ and $(A^{*},\prec_{A^{*}},\succ_{A^{*}})$ are isomorphic through $\varphi$.
Moreover, we obtain the induced Lie algebra $(\frak g^{*}=A^{*}\otimes B^{*},[-,-]_{\frak g^{*}})$ from $(A^{*},\prec_{A^{*}},\succ_{A^{*}})$ and $(B^{*},\dashv_{B^{*}},\vdbs)$.
Further, we can define a linear map $f: A\otimes (B\oplus B^{*})\rightarrow (A\otimes B)\oplus (A^{*}\otimes B^{*})$ by
\begin{equation*}
	f(x\otimes a+y\otimes b^{*})=x\otimes a+\varphi(y)\otimes b^{*},\; x,y\in A, a\in B, b^{*}\in B^{*}.
\end{equation*}
Evidently, $f$ is a linear isomorphism. Thus we can transport the Lie algebra structure on $A\otimes (B\oplus B^{*})$ to $(A\otimes B)\oplus (A^{*}\otimes B^{*})$ by 
\begin{equation}\mlabel{2331}
	[x\otimes a+y^{*}\otimes b^{*},z\otimes c+w^{*}\otimes d^{*}]'_{d}=f[x\otimes a+\varphi^{-1}(y^{*})\otimes b^{*}, z\otimes c+\varphi^{-1}(w^{*})\otimes d^{*}]_{d},
\end{equation}
for all $x,z\in A, y^{*},w^{*}\in A^{*}, a,c\in B, b^{*},d^{*}\in B^{*}$.
Moreover, the restrictions of the Lie bracket $[-,-]'_{d}$ on $\frak g=A\otimes B$ and $\frak g^{*}=A^{*}\otimes B^{*}$ coincide with $[-,-]_{\frak g}$ and $[-,-]_{\frak g^{*}}$ respectively.
It is also straightforward to check that the natural nondegenerate symmetric bilinear form $\mathcal{B}_{s}$ given by  \meqref{2250} is invariant on $(\frak g\oplus\frak g^{*},[-,-]'_{d})$.
Hence we have the following result.

\begin{lemma}\mlabel{2337}
With the notations above, $((\frak g\oplus \frak g^{*},[-,-]'_{d},\mathcal{B}_{s}),(\frak g,[-,-]_{\frak g}),(\frak g^{*},[-,-]_{\frak g^{*}}))$ is a standard Manin triple of Lie algebras. Moreover, the two Manin triples of Lie algebras  $((A\otimes (B\oplus B^{*}),[-,-]_{d},\mathcal{B}), (A\otimes B,[-,-]_{A\otimes B}), (A\otimes B^{*},[-,-]_{A\otimes B^{*}}))$ and $((\frak g\oplus \frak g^{*},[-,-]'_{d},\mathcal{B}_{s}),(\frak g,[-,-]_{\frak g}),(\frak g^{*}$,
$[-,-]_{\frak g^{*}}))$ are isomorphic through $f$.
\end{lemma}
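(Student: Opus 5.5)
The approach is to transport everything along the linear isomorphism $f$ and check the Manin triple axioms one at a time. The first step is that $(\frak g\oplus\frak g^*,[-,-]'_d)$ is a Lie algebra: by \meqref{2331}, $[-,-]'_d$ is exactly the bracket obtained by conjugating $[-,-]_d$ by $f$. By Proposition~\mref{2283}, $(A\otimes(B\oplus B^*),[-,-]_d)$ is a Lie algebra, so the transported bracket is one as well, and $f$ is a Lie algebra isomorphism by construction. Moreover $f$ restricts to the identity on $A\otimes B$ and to $\varphi\otimes\id$ from $A\otimes B^*$ onto $A^*\otimes B^*$. Hence $f(A\otimes B)=\frak g$ and $f(A\otimes B^*)=\frak g^*$.

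Next I would check the subalgebra claims. On $\frak g=A\otimes B$, the restriction of $[-,-]'_d$ is $[-,-]_d$ restricted to $A\otimes B$, which by the formula in Proposition~\mref{2283} is the bracket \meqref{eq:xyab}, i.e.\ $[-,-]_{\frak g}$. On $\frak g^*$, take $y^*\otimes b^*$ and $w^*\otimes d^*$ and write $y=\varphi^{-1}(y^*)$, $w=\varphi^{-1}(w^*)$. Then
$$[y^*\otimes b^*,w^*\otimes d^*]'_d=\varphi(y\succ_A w)\otimes b^*\vdbs d^*+\varphi(y\prec_A w)\otimes b^*\dashv_{B^*}d^*-\cdots .$$
Since $\varphi(y\succ_A w)=y^*\succ_{A^*}w^*$ and likewise for $\prec$, this is precisely the bracket \meqref{eq:xyab} built from $(A^*,\prec_{A^*},\succ_{A^*})$ and $(B^*,\dashv_{B^*},\vdbs)$, namely $[-,-]_{\frak g^*}$.

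Then I would check the bilinear form: the claim is that $\mathcal{B}_s(f(u),f(v))=\mathcal{B}(u,v)$ for all $u,v$, possibly up to a harmless sign convention. Take $u=x\otimes a+y\otimes b^*$ and $v=z\otimes c+w\otimes d^*$. Here $\mathcal{B}_s$ pairs $\frak g$ with $\frak g^*$ via $\langle x\otimes a,\varphi(w)\otimes d^*\rangle=\langle\varphi(w),x\rangle\langle a,d^*\rangle=\omega(w,x)\langle a,d^*\rangle=-\omega(x,w)\langle a,d^*\rangle$, and similarly $\langle \varphi(y)\otimes b^*,z\otimes c\rangle=\omega(y,z)\langle b^*,c\rangle$. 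Summing gives exactly the expression for $\mathcal{B}$ in Proposition~\mref{2283}. So $f$ is an isometry, and the invariance, symmetry and nondegeneracy of $\mathcal{B}_s$ on $(\frak g\oplus\frak g^*,[-,-]'_d)$ follow from those of $\mathcal{B}$. Isotropy of $\frak g$ and $\frak g^*$ is immediate from the form of $\mathcal{B}_s$. This gives both that the collection is a standard Manin triple and that $f$ is an isomorphism of Manin triples.

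The main obstacle is that Proposition~\mref{2283} is only asserted to follow from Proposition~\mref{eq:dend-dia}'s analogue, so I would double-check that $\mathcal{B}$ really is symmetric and invariant on $A\otimes(B\oplus B^*)$. It is the tensor product form $\omega\otimes\omega_d$ of two antisymmetric forms, hence symmetric, and invariance follows from Proposition~\mref{prop:dend-dia} applied to the quadratic diassociative algebra $(B\oplus B^*,\omega_d)$. I would also carefully track the sign convention of $\omega_d$ in \meqref{eq:omd} against the formula for $\mathcal{B}$, since that is where a stray sign could appear.
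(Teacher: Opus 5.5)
Your proposal is correct and follows the paper's argument. Like the paper, you transport the bracket along $f$, observe that its restrictions to $\mathfrak g=A\otimes B$ and $\mathfrak g^*=A^*\otimes B^*$ are $[-,-]_{\mathfrak g}$ and $[-,-]_{\mathfrak g^*}$ (the latter because $\varphi$ is a dendriform isomorphism), and then check the bilinear form. Your explicit computation $\mathcal{B}_s(f(u),f(v))=\mathcal{B}(u,v)$ spells out what the paper compresses into ``straightforward to check that $\mathcal{B}_s$ is invariant''; it also directly gives the isometry condition needed for $f$ to be an isomorphism of Manin triples.
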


Combining Corollary \mref{coro:equiv}, Proposition \mref{2283}, Lemma \mref{2337} and Theorem \mref{thm:Liebia} together, we have the following result, which derives Lie bialgebras from quadratic dendriform algebras and  diassociative bialgebras.

\begin{prop}\mlabel{prop:dend-dia-Lie}
	Let $(A,\prec_{A},\succ_{A},\omega)$ be a quadratic dendriform algebra. Let $(B, \dashv_{B},\vdb,\Delta_{\dvb},\Delta_{\vdb})$ be a diassociative bialgebra and $((B\oplus B^{*},\dashv_{d},\vd_{d},\omega_{d}), (B,\dashv_{B},\vdb),(B^{*},\dashv_{B^{*}},\vdbs))$ be the corresponding standard Manin triple of diassociative algebras. Then there is a standard Manin triple of Lie algebras $((\frak g\oplus \frak g^{*},[-,-]'_{d},\mathcal{B}_{s}),(\frak g,[-,-]_{\frak g}),(\frak g^{*},[-,-]_{\frak g^{*}}))$, where $\frak g=A\otimes B$, $[-,-]'_{d}$ is given by \meqref{2331} and $\mathcal{B}_{s}$ is given by \meqref{2250}. Consequently, there is a Lie bialgebra $(\frak g,[-,-]_{\frak g},\delta)$, where $\delta$ is the linear dual of $[-,-]_{\frak g^{*}}$.
\end{prop}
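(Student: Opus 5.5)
The proof chains the earlier equivalences, as the sentence preceding the statement indicates.

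First, since $(B,\dashv_B,\vdb,\Delta_{\dvb},\Delta_{\vdb})$ is a diassociative bialgebra, Corollary~\mref{coro:equiv} (with $A$ there replaced by $B$, and $\dashv_{B^*},\vdbs$ the linear duals of $\Delta_{\dvb},\Delta_{\vdb}$) gives that $(B^*,\dashv_{B^*},\vdbs)$ is a diassociative algebra. It also provides the standard Manin triple $((B\oplus B^*,\dashv_d,\vd_d,\omega_d),(B,\dashv_B,\vdb),(B^*,\dashv_{B^*},\vdbs))$ referred to in the statement.

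Second, Proposition~\mref{2283} applies to the quadratic dendriform algebra $(A,\prec_A,\succ_A,\omega)$ and this standard Manin triple. It yields a Manin triple of Lie algebras $((A\otimes(B\oplus B^*),[-,-]_d,\mathcal{B}),(A\otimes B,[-,-]_{A\otimes B}),(A\otimes B^*,[-,-]_{A\otimes B^*}))$. Its content rests on Proposition~\mref{prop:dend-dia}, applied to $A$ and the quadratic diassociative algebra $(B\oplus B^*,\omega_d)$. That proposition gives a Lie bracket on $A\otimes(B\oplus B^*)$ and invariance of the tensor form $\omega\cdot\omega_d$. The form $\mathcal{B}$ is symmetric because both factors are antisymmetric, and it is nondegenerate because both factors are. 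Isotropy of $A\otimes B$ and $A\otimes B^*$ follows from isotropy of $B$ and $B^*$ under $\omega_d$. These two spaces are Lie subalgebras because $B$ and $B^*$ are diassociative subalgebras, so the bracket \meqref{eq:xyab} restricts.

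Third, Lemma~\mref{2337} shows that the transport along $f=\id\otimes\id+\varphi\otimes\id$ produces a standard Manin triple $((\frakg\oplus\frakg^*,[-,-]'_d,\mathcal{B}_s),(\frakg,[-,-]_\frakg),(\frakg^*,[-,-]_{\frakg^*}))$ with $\frakg=A\otimes B$. Here $\frakg^*=A^*\otimes B^*$ is identified with the dual of $A\otimes B$ via the natural pairing. The main point needing care is checking that $\mathcal{B}$ pulls back to $\mathcal{B}_s$ under this identification, up to the sign conventions in \meqref{eq:omd} and \meqref{2322}. One computes $\mathcal{B}_s(f(x\otimes a),f(y\otimes b^*))=\langle\varphi(y),x\rangle\langle b^*,a\rangle=\omega(y,x)\langle b^*,a\rangle$. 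This is compared with $\mathcal{B}$, and if necessary $\varphi$ is replaced by $-\varphi$, which does not affect the Manin triple structure. Since Lemma~\mref{2337} is assumed, I take this as done.

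Finally, Theorem~\mref{thm:Liebia} converts the standard Manin triple into a Lie bialgebra $(\frakg,[-,-]_\frakg,\delta)$, where $\delta$ is the linear dual of $[-,-]_{\frakg^*}$. This completes the argument.
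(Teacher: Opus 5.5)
Your proposal is correct and follows the paper's own proof, which is exactly the chain Corollary~\ref{coro:equiv} $\Rightarrow$ Proposition~\ref{2283} $\Rightarrow$ Lemma~\ref{2337} $\Rightarrow$ Theorem~\ref{thm:Liebia}. Your sign hedge is unnecessary: Proposition~\ref{2283} gives $\mathcal{B}(x\otimes a,y\otimes b^*)=-\omega(x,y)\langle a,b^*\rangle=\omega(y,x)\langle b^*,a\rangle$, which matches your $\mathcal{B}_s(f(x\otimes a),f(y\otimes b^*))$ exactly, so no replacement of $\varphi$ by $-\varphi$ is needed (and such a replacement would flip the sign of $[-,-]_{\mathfrak{g}^*}$ and hence of $\delta$).
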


\begin{remark}
	Similar to the above discussions, the tensor product of a quadratic diassociative algebra and a dendriform D-bialgebra \mcite{Bai} also yields a Manin triple of Lie algebras. Moreover, we can also derive Lie bialgebras from quadratic diassociative algebras and  dendriform D-bialgebras.
\end{remark}

\subsection{Construction of diassociative bialgebras via  ASI bialgebras and quadratic perm algebras}
\begin{defn}  A {\bf perm algebra} \cite{Cha01} is a pair $(A,\circ_{A})$ consisting of a vector space $A$ and a binary operation $\circ_{A}: A\ot A\to A$ such that
\begin{equation*}
(x\circ_{A} y)\circ_{A} z=x\circ_{A} (y\circ_{A} z)=x\circ_{A} (z\circ_{A} y),\quad x,y,z\in A.
\end{equation*}
A {\bf quadratic perm algebra} \cite{Cha05} is a triple $(A,\circ_{A},\omega)$ such that $(A,\circ_{A})$ is a perm algebra and $\omega$ is a nondegenerate antisymmetric bilinear form on $(A,\circ_{A})$ which is invariant in the sense that 
\begin{equation}\mlabel{eq:perm-inv}
	\omega(x\circ_{A} y,z)=\omega(y,z\circ_{A}  x-x\circ_{A}  z) ,\quad x,y,z\in A.
\end{equation}
\end{defn}
 
Recall that a (symmetric) Frobenius  algebra is an associative algebra $(A,\cdot_{A})$ together with a nondegenerate (symmetric) bilinear form $\mathcal{B}$ which is invariant in the sense that
\begin{equation}\label{2426}
\mathcal{B}(x\cdot_{A}y,z)=\mathcal{B}(x,y\cdot_{A}z),\; x,y,z\in A.
\end{equation}
 
\begin{prop} \mlabel{prop:ass-perm} Let $(A,\cdot_{A})$ be an associative algebra  and let $(B,\circ_{B})$ be a perm algebra. 
	Then there is a diassociative algebra structure on $C=A\otimes B$ given by
\begin{equation}\mlabel{eq:ap}
(x\ot a)\dashv_{C} (y\ot b):=(x\cdot_{A} y)\ot (a\circ_{B} b),\, (x\ot a)\vd_{C} (y\ot b):=(x\cdot_{A} y)\ot (b\circ_{B} a),\, x,y\in A, a,b\in B.
\end{equation}
Furthermore, if there are bilinear forms $\mathcal{B}$ and $\omega$ on $A$ and $B$ such that $(A,\cdot_{A},\mathcal{B})$ is a symmetric Frobenius algebra and $(B,\circ_{B},\omega )$ is a quadratic perm algebra, then there is a  quadratic diassociative algebra $(C=A\ot B,\dashv_{C},\vd_{C},\varpi)$, where $\dashv_{C}$ and $\vd_{C}$ are given by ~\meqref{eq:ap}, and $\varpi$ is defined by
\begin{equation*}
\varpi(x\ot a, y\ot b):=\mathcal{B}(x,y)\omega(a,b),\quad x,y\in A, a,b\in B.
\end{equation*}
\end{prop}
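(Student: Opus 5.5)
The proof has two parts: the diassociative identities on $C=A\ot B$, and the invariance of $\varpi$. Throughout I write $\cdot$ for $\cdot_A$ and $\circ$ for $\circ_B$.

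\emph{Diassociativity.} On pure tensors, every identity \meqref{eq:dia1}--\meqref{eq:dia3} has the form $(x\cdot y\cdot z)\ot(\text{word in }a,b,c)$, because $\cdot_A$ is associative. So each identity reduces to an identity in $(B,\circ)$. Write $p(u,v,w)$ for the common value $(u\circ v)\circ w=u\circ(v\circ w)=u\circ(w\circ v)$ from the perm axioms. Then:
\begin{itemize}
\item In \meqref{eq:dia1}, the three terms give $(a\circ b)\circ c$, $a\circ(b\circ c)$ and $a\circ(c\circ b)$, all equal to $p(a,b,c)$.
\item In \meqref{eq:dia2}, both sides give $(b\circ a)\circ c=b\circ(a\circ c)$, which is associativity.
\item In \meqref{eq:dia3}, the three terms give $c\circ(b\circ a)$, $c\circ(a\circ b)$ and $(c\circ b)\circ a$. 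The first two are equal by the perm axiom $u\circ(v\circ w)=u\circ(w\circ v)$, and the third equals the first by associativity.
\end{itemize}
This settles the first assertion.

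\emph{Form properties.} $\varpi$ is nondegenerate because it is the tensor product of two nondegenerate forms. It is antisymmetric because $\mathcal{B}$ is symmetric and $\omega$ is antisymmetric.

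\emph{Invariance.} It suffices to check the equivalent pair \meqref{eq:omega1}--\meqref{eq:omega2} on pure tensors. For each term I move the $A$-factor into the normal form $\mathcal{B}(x\cdot y,z)$ using \eqref{2426} together with symmetry, since cyclic moves are then free: $\mathcal{B}(y\cdot z,x)=\mathcal{B}(x,y\cdot z)=\mathcal{B}(x\cdot y,z)$. Hence every term of \meqref{eq:omega1} and \meqref{eq:omega2} carries the common factor $\mathcal{B}(x\cdot y,z)$, and the conditions reduce to identities for $\omega$ on $B$.
\begin{itemize}
\item \meqref{eq:omega1} becomes $\omega(a\circ b,c)=-\omega(c\circ b,a)$.
\item \meqref{eq:omega2} becomes $\omega(b\circ a,c)=\omega(b\circ c,a)-\omega(c\circ b,a)$.
\end{itemize}

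These are to be derived from \meqref{eq:perm-inv} and antisymmetry. For the second, apply \meqref{eq:perm-inv} with $(x,y,z)=(b,a,c)$: $\omega(b\circ a,c)=\omega(a,c\circ b-b\circ c)$, which by antisymmetry is exactly the required right-hand side.

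The first identity is the main obstacle, since it needs the perm identities as well. My plan is as follows.
\begin{itemize}
\item Apply \meqref{eq:perm-inv} with $(x,y,z)=(a,b,c)$: $\omega(a\circ b,c)=\omega(b,c\circ a-a\circ c)$.
\item Apply it with $(x,y,z)=(c,b,a)$: $\omega(c\circ b,a)=\omega(b,a\circ c-c\circ a)$.
\end{itemize}
These right-hand sides are negatives of each other, which gives the first identity directly. So in fact only \meqref{eq:perm-inv} and antisymmetry are needed.

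What remains delicate is bookkeeping. First, each of $\dashv_C$ and $\vd_C$ must be expanded with the correct order of the $B$-factors, since $\vd_C$ reverses them. Second, the $A$-factors of all terms must be matched to the same normal form $\mathcal{B}(x\cdot y,z)$. This is where symmetry of $\mathcal{B}$ is genuinely used, and I expect sign and order errors to be the main risk.
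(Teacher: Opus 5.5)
Your route is the paper's: reduce to pure tensors, absorb the $A$-factor using associativity (first part) and symmetry plus \eqref{2426} (second part), and reduce to the perm axioms and \meqref{eq:perm-inv}. Your invariance argument is correct. The only difference is that you check the equivalent pair \meqref{eq:omega1}--\meqref{eq:omega2}, whereas the paper checks the two identities of \meqref{eq:omega} directly. The paper's first identity needs a single application of \meqref{eq:perm-inv}, with $(x,y,z)=(b,c,a)$; your \meqref{eq:omega1} needs two. Either choice is fine. You also justify nondegeneracy and antisymmetry of $\varpi$, which the paper leaves implicit.

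There is, however, one wrong step in the diassociativity check, of exactly the kind you warned about. In \meqref{eq:dia2} the right-hand side is
\[
(x\ot a)\vd_C\bigl((y\ot b)\dashv_C(z\ot c)\bigr)=(x\cdot_A y\cdot_A z)\ot\bigl((b\circ c)\circ a\bigr),
\]
because $\vd_C$ places the $B$-factor of its right argument, here $b\circ c$, first. So the identity you need is $(b\circ a)\circ c=(b\circ c)\circ a$, not $(b\circ a)\circ c=b\circ(a\circ c)$, and associativity alone does not give it. It does hold if you use both perm axioms:
\[
(b\circ a)\circ c=b\circ(a\circ c)=b\circ(c\circ a)=(b\circ c)\circ a .
\]
With this correction your proof is complete.
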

\begin{proof} 
The first half part follows from a straightforward computation. To prove the second half part, 
it suffices to verify that $\varpi$ satisfies ~\meqref{eq:omega}, that is, for all $x,y,z\in A, a,b,c\in B$,
\begin{eqnarray*}
	\varpi(x\ot a,(y\ot b)\dashv_{C} (z\ot c))&=&\varpi(z\ot c, (x\ot a)\ob_{C} (y\ot b)),
		\\
		\varpi((x\ot a)\vd_{C} (y\ot b),z\ot c)&=&\varpi(x\ot a, (y\ot b)\ob_{C} (z\ot c)).
	\end{eqnarray*}
By \eqref{2426} and \meqref{eq:perm-inv}, we have
\begin{eqnarray*}
\varpi(z\ot c, (x\ot a)\ob_{C} (y\ot b))&=&\varpi(z\ot c, x\cdot_{A} y\ot (b\circ_{B} a- a\circ_{B} b))\\
&=&\mathcal{B}(z,x\cdot_{A}y)\omega(c,b\circ_{B} a-a\circ_{B} b)\\
&=&\mathcal{B}(x,y\cdot_{A}z)\omega(a, b\circ_{B} c)\\
&=&\varpi(x\ot a,(y\ot b)\dashv_{C} (z\ot c))
\end{eqnarray*}
A similar proof applies to the other equation.
\end{proof}

\begin{defn}\mcite{Bai}
A {\bf double construction of Frobenius algebras} is a collection $((A\oplus A^{*},\cdot_{d}$,
$\mathcal{B}_{s}),
(A,\cdot_{A}),(A^{*},\cdot_{A^{*}}))$ such that $(A\oplus A^{*},\cdot_{d},\mathcal{B}_{s})$ is a symmetric Frobenius algebra and $(A,\cdot_{A}),(A^{*},\cdot_{A^{*}})$ are associative subalgebras of $(A\oplus A^{*},\cdot_{d})$.
\end{defn}

\begin{defn}\cite{Bai}
\mlabel{de:bial}
An  {antisymmetric infinitesimal bialgebra} (or an {\bf ASI bialgebra} in short) is a triple $(A,\cdot_{A},\Delta)$ such that $(A,\cdot_{A})$ is an associative algebra, $(A,\Delta)$ is a  coassociative coalgebra and the following equations hold:
 \begin{equation}
 \Delta(x\cdot_{A} y)=(R_{\cdot_{A}}(y)\otimes \id)\Delta(x)+(\id\otimes L_{\cdot_{A}}(x))\Delta(y),
 \mlabel{eq:3.14}
 \end{equation}
 \begin{equation}
 (L_{\cdot_{A}}(x)\otimes \id-\id\otimes R_{\cdot_{A}}(x))\Delta(y)=\sigma((\id\otimes R_{\cdot_{A}}(y)-L_{\cdot_{A}}(y)\otimes \id)\Delta(x)), \quad  x, y\in A.
 \mlabel{eq:3.15}
 \end{equation}
\end{defn}

\begin{prop}\cite{Bai}\mlabel{2486}
Let $(A,\cdot_{A})$ and $(A^{*},\cdot_{A^{*}})$ be associative algebras and $\Delta:A\rightarrow A\otimes A$ be the linear dual of $\cdot_{A^{*}}$. Then $(A,\cdot_{A},\Delta)$ is an ASI bialgebra if and only if there is a double construction of Frobenius algebras $((A\oplus A^{*},\cdot_{d},\mathcal{B}_{s}),(A,\cdot_{A}),(A^{*},\cdot_{A^{*}}))$.
\end{prop}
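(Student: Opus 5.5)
The approach is the standard Manin triple argument for associative algebras, as in \cite{Bai}: identify $A\oplus A^*$ with the double of the coalgebra and compare the bialgebra axioms with the associativity and invariance of the double. Suppose $(A,\cdot_A,\Delta)$ is an ASI bialgebra, with $\cdot_{A^*}$ the linear dual of $\Delta$. Since $(A,\Delta)$ is coassociative, $(A^*,\cdot_{A^*})$ is associative. On $A\oplus A^*$ I would define
\[
(x+a^*)\cdot_d(y+b^*)=x\cdot_A y+R^*_{\cdot_{A^*}}(b^*)x+L^*_{\cdot_{A^*}}(a^*)y+a^*\cdot_{A^*}b^*+R^*_{\cdot_A}(y)a^*+L^*_{\cdot_A}(x)b^*.
\]
This is exactly the product forced by requiring $\mathcal{B}_s$ to be invariant with both $A$ and $A^*$ subalgebras. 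Indeed, in the converse direction I would recover these formulas by pairing $x\cdot_d a^*$ against $y$ and against $b^*$ and using invariance plus symmetry of $\mathcal{B}_s$, in the same way as in the proof of Theorem~\mref{thm:Manin}.

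The forward direction has two checks. First, invariance of $\mathcal{B}_s$ under $\cdot_d$ is a direct pairing computation: each term of $\mathcal{B}_s((x+a^*)\cdot_d(y+b^*),z+c^*)$ matches a term of $\mathcal{B}_s(x+a^*,(y+b^*)\cdot_d(z+c^*))$, using only the definitions of $L^*,R^*$ and associativity of $A$ and $A^*$. Second, associativity of $\cdot_d$ is the matched pair condition for $(A,A^*,R^*_{\cdot_A},L^*_{\cdot_A},R^*_{\cdot_{A^*}},L^*_{\cdot_{A^*}})$. Both $(A^*,R^*_{\cdot_A},L^*_{\cdot_A})$ and $(A,R^*_{\cdot_{A^*}},L^*_{\cdot_{A^*}})$ are bimodules (the coregular representations), so only the mixed compatibility conditions remain. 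Testing associativity on the six mixed triples, such as $(x,y,a^*)$ and $(x,a^*,y)$, and dualizing each condition by pairing with test vectors, as done for \meqref{eq:ppd1}--\meqref{eq:bi12} in Theorem~\mref{thm:matched}, turns the conditions into coproduct identities. One group is equivalent to \meqref{eq:3.14}, and the other to \meqref{eq:3.15}.

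The converse runs the same chain backwards. Given the double construction, invariance forces $\cdot_d$ to have the form above. Associativity of $\cdot_d$ then gives the matched pair conditions, hence \meqref{eq:3.14} and \meqref{eq:3.15}. Associativity of the subalgebra $A^*$ gives coassociativity of $\Delta$.

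The main obstacle is the bookkeeping in the dualization step. Each of the mixed associativity identities splits into an $A$-component and an $A^*$-component, and several of them turn out to be equivalent, as with the pairs \meqref{eq:ppd1}$\Leftrightarrow$\meqref{eq:ppd14} earlier. I would first show this redundancy, namely that each matched pair condition on $A$ is the dual of one on $A^*$ under $\mathcal{B}_s$-invariance. Once that is established, only two independent identities remain to be matched with \meqref{eq:3.14} and \meqref{eq:3.15}, keeping careful track of the flip $\sigma$ in the second.
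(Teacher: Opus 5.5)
Your route is the one in \cite{Bai}, which the paper cites rather than reproves. The double construction is equivalent to the matched pair $(A,A^*,R^*_{\cdot_A},L^*_{\cdot_A},R^*_{\cdot_{A^*}},L^*_{\cdot_{A^*}})$, and its mixed compatibility conditions dualize to \meqref{eq:3.14} and \meqref{eq:3.15}.

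There is one concrete error: the product you display has $L^*$ and $R^*$ interchanged in all four mixed terms. It is therefore not the product of the matched pair you name, and the checks you claim fail for it.
\begin{itemize}
\item Invariance fails. Your formula gives $\mathcal{B}_s(x\cdot_d y,c^*)=\langle x\cdot_A y,c^*\rangle$, but
\[
\mathcal{B}_s(x,y\cdot_d c^*)=\langle x,L^*_{\cdot_A}(y)c^*\rangle=\langle y\cdot_A x,c^*\rangle .
\]
So invariance fails unless $A$ is commutative.
\item Associativity fails. Since $L^*_{\cdot_A}(x\cdot_A y)=L^*_{\cdot_A}(y)L^*_{\cdot_A}(x)$, the map $x\mapsto L^*_{\cdot_A}(x)$ is not a left action on $A^*$. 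So your $\cdot_d$ is not associative in general, even before any bialgebra condition enters.
\end{itemize}

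If you carry out the derivation you sketch (pair $x\cdot_d a^*$ with $y$ and with $b^*$, using symmetry and invariance of $\mathcal{B}_s$), you get
\[
x\cdot_d a^*=L^*_{\cdot_{A^*}}(a^*)x+R^*_{\cdot_A}(x)a^*,\qquad a^*\cdot_d y=R^*_{\cdot_{A^*}}(a^*)y+L^*_{\cdot_A}(y)a^*,
\]
that is,
\[
(x+a^*)\cdot_d(y+b^*)=x\cdot_A y+L^*_{\cdot_{A^*}}(b^*)x+R^*_{\cdot_{A^*}}(a^*)y+a^*\cdot_{A^*}b^*+R^*_{\cdot_A}(x)b^*+L^*_{\cdot_A}(y)a^* .
\]
This is the product of your matched pair, with left actions $R^*$ and right actions $L^*$.

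With this correction the rest of your plan goes through as stated:
\begin{itemize}
\item Invariance is a six-term match, and no associativity is needed for it.
\item The $A^*$-components of associativity on triples containing one element of $A^*$ are the coregular bimodule axioms. So are the $A$-components on triples containing two.
\item The $A$-components for $(x,y,a^*)$ and $(a^*,x,y)$ both dualize to \meqref{eq:3.14}.
\item The $A$-component for $(x,a^*,y)$ dualizes, after applying $\sigma$, to \meqref{eq:3.15}.
\item The $A^*$-components for $(a^*,b^*,x)$, $(x,a^*,b^*)$ and $(a^*,x,b^*)$ return \meqref{eq:3.14}, \meqref{eq:3.14} and \meqref{eq:3.15} again. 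This is the redundancy you anticipated.
\end{itemize}
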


\begin{prop}\mlabel{2490}
	Let $(A,\cdot_{A})$ and $(A^{*},\cdot_{A^{*}})$ be associative algebras and $(B,\circ_{B},\omega)$ be a quadratic perm algebra.
	Let $(C=A\otimes B,\dashv_{C},\vd_{C})$ and $(D=A^{*}\otimes B,\dashv_{D},\vd_{D})$ be the diassociative algebras induced from $(A,\cdot_{A})$ and $(B,\circ_{B})$ as well as $(A^{*},\cdot_{A^{*}})$ and $(B,\circ_{B})$ respectively.
	If there is a double construction of Frobenius algebras $((A\oplus A^{*},\cdot_{d},\mathcal{B}_{s}),(A,\cdot_{A}),(A^{*},\cdot_{A^{*}}))$, then there is a Manin triple of diassociative algebras $((C\oplus D,\dashv_{d},\vd_{d},\varpi),(C,\dashv_{C},\vd_{C}), (D,\dashv_{D},\vd_{D}))$, where
	\begin{eqnarray*}
&&(x\otimes a+y^{*}\otimes b)\dashv_{d}(z\otimes c+w^{*}\otimes d)\\
&&=x\cdot_{A}z\otimes a\circ_{B}c+x\cdot_{d}w^{*}\otimes a\circ_{B}d+y^{*}\cdot_{d}z\otimes b\circ_{B}c+y^{*}\cdot_{A^{*}}w^{*}\otimes b\circ_{B}d,\\
&&(x\otimes a+y^{*}\otimes b)\vd_{d}(z\otimes c+w^{*}\otimes d)\\
&&=x\cdot_{A}z\otimes c\circ_{B}a+x\cdot_{d}w^{*}\otimes d\circ_{B}a+y^{*}\cdot_{d}z\otimes c\circ_{B}b+y^{*}\cdot_{A^{*}}w^{*}\otimes d\circ_{B}b
	\end{eqnarray*}
and 
\begin{equation*}
	\varpi(x\otimes a+y^{*}\otimes b,z\otimes c+w^{*}\otimes d)=\langle x,w^{*}\rangle\omega(a,d)+\langle y^{*},z\rangle\omega(b,c)
\end{equation*}
for all $x,z\in A, y^{*},w^{*}\in A^{*}, a,b,c,d\in B$.
\end{prop}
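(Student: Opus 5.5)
The plan is to apply Proposition~\mref{prop:ass-perm} to the symmetric Frobenius algebra $(A\oplus A^{*},\cdot_{d},\mathcal{B}_{s})$ and the quadratic perm algebra $(B,\circ_{B},\omega)$, and then to read off the Manin triple from the resulting quadratic diassociative algebra. The first step is to identify $(A\oplus A^{*})\otimes B$ with $C\oplus D=(A\otimes B)\oplus(A^{*}\otimes B)$ via the canonical linear isomorphism $(x+y^{*})\otimes a\mapsto x\otimes a+y^{*}\otimes a$.

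Under this identification, the diassociative structure of Proposition~\mref{prop:ass-perm} on $(A\oplus A^{*})\otimes B$, namely $(u\otimes a)\dashv(v\otimes b)=(u\cdot_{d}v)\otimes(a\circ_{B}b)$ and $(u\otimes a)\vdash(v\otimes b)=(u\cdot_{d}v)\otimes(b\circ_{B}a)$, expands bilinearly into exactly the formulas for $\dashv_{d}$ and $\vd_{d}$ in the statement. In this expansion I use that $\cdot_{d}$ restricts to $\cdot_{A}$ on $A$ and to $\cdot_{A^{*}}$ on $A^{*}$, because these are subalgebras of the double construction. Consequently $(C\oplus D,\dashv_{d},\vd_{d})$ is a diassociative algebra. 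Moreover, the restrictions to $C=A\otimes B$ and $D=A^{*}\otimes B$ coincide with the structures $(\dashv_{C},\vd_{C})$ and $(\dashv_{D},\vd_{D})$ induced by \meqref{eq:ap}, so $C$ and $D$ are diassociative subalgebras.

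Next, the second half of Proposition~\mref{prop:ass-perm} shows that $\varpi((u\otimes a),(v\otimes b))=\mathcal{B}_{s}(u,v)\omega(a,b)$ is invariant in the sense of \meqref{eq:omega}. Expanding with $\mathcal{B}_{s}(x+y^{*},z+w^{*})=\langle x,w^{*}\rangle+\langle y^{*},z\rangle$ gives exactly the stated $\varpi$. Nondegeneracy follows because $\varpi$ is a tensor product of nondegenerate forms, and antisymmetry holds because $\mathcal{B}_{s}$ is symmetric and $\omega$ is antisymmetric. Isotropy is immediate: $\mathcal{B}_{s}$ vanishes on $A\times A$ and on $A^{*}\times A^{*}$, so $\varpi$ vanishes on $C\times C$ and on $D\times D$. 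By Definition~\mref{defn:dia}, this yields the required Manin triple.

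The only step needing care is the invariance claim inside Proposition~\mref{prop:ass-perm}, whose proof checked only the first identity and left the second (for $\vd_{C}$) as ``similar.'' If relying on that is insufficient, I would verify $\varpi((x\otimes a)\vd_{C}(y\otimes b),z\otimes c)=\varpi(x\otimes a,(y\otimes b)\ob_{C}(z\otimes c))$ directly. This reduces to the identities $\mathcal{B}(x\cdot y,z)=\mathcal{B}(x,y\cdot z)$ and $\omega(b\circ_{B}a,c)=\omega(a,c\circ_{B}b-b\circ_{B}c)$, the latter being \meqref{eq:perm-inv} with the variables renamed. I expect this check to be the main, though still routine, obstacle.
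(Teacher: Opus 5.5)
Your proposal is correct and follows the same route as the paper. The paper's proof is the one-line appeal to Proposition~\ref{prop:ass-perm}, applied to the symmetric Frobenius algebra $(A\oplus A^{*},\cdot_{d},\mathcal{B}_{s})$ and the quadratic perm algebra $(B,\circ_{B},\omega)$. You make explicit what that appeal leaves implicit: the identification $(A\oplus A^{*})\otimes B\cong C\oplus D$, the subalgebra, antisymmetry and isotropy checks, and the second invariance identity, which does reduce to \eqref{eq:perm-inv} with $x\mapsto b$, $y\mapsto a$, $z\mapsto c$.
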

\begin{proof}
It follows directly from Proposition \mref{prop:ass-perm}.	
\end{proof}

Keep the same assumptions in Proposition \mref{2490}.
By the nondegeneracy of $\omega$ in the quadratic perm algebra $(B,\circ_{B},\omega)$, there is a linear isomorphism $\varphi: B\rightarrow B^{*}$ given by
\begin{equation*}
	\langle \varphi(a),b\rangle=\omega(a,b),\; a, b\in B.
\end{equation*}
Thus there is a perm algebra $(B^{*},\circ_{B^{*}})$ defined by
\begin{equation*}
	a^{*}\circ_{B^{*}}b^{*}=\varphi( \varphi^{-1}(a^{*})\circ_{B} \varphi^{-1}(b^{*}) ),\; a^{*},b^{*}\in B^{*}.
\end{equation*}
The two perm algebras $(B,\circ_{B} )$ and $(B^{*},\circ_{B^{*}})$ are isomorphic through $\varphi$.
Moreover, we obtain the induced diassociative algebra $(C^{*}=A^{*}\otimes B^{*},\dashv_{C^{*}},\vd_{C^{*}})$ from $(A^{*},\cdot_{A^{*}})$ and $(B^{*},\circ_{B^{*}})$.
Further, we can define a linear map $f:(A\oplus A^{*})\otimes B\rightarrow (A\otimes B)\oplus (A^{*}\otimes B^{*})$ by
\begin{equation*}
	f(x\otimes a+y^{*}\otimes b)=x\otimes a+y^{*}\otimes \varphi(b),\; x\in A, y^{*}\in A^{*}, a,b\in B.
\end{equation*}
Evidently, $f$ is a linear isomorphism.
Thus we can transport the diassociative algebra structure on $(A\oplus A^{*})\otimes B$ to $ (A\otimes B)\oplus (A^{*}\otimes B^{*}) $ by
\begin{eqnarray}
 ( x\otimes a+y^{*}\otimes b^{*}  )\dashv'_{d}(z\otimes c+w^{*}\otimes d^{*})=f(  ( x\otimes a+y^{*}\otimes \varphi^{-1}(b^{*})  )\dashv_{d}(z\otimes c+w^{*}\otimes \varphi^{-1}(d^{*}) )  ),\mlabel{2528}&&\\
( x\otimes a+y^{*}\otimes b^{*}  )\vd'_{d}(z\otimes c+w^{*}\otimes d^{*})=f(  ( x\otimes a+y^{*}\otimes \varphi^{-1}(b^{*})  )\vd_{d}(z\otimes c+w^{*}\otimes \varphi^{-1}(d^{*}) )  ),\mlabel{2529}&&
\end{eqnarray}
for all $x,z\in A, y^{*},w^{*}\in A^{*}, a,c\in B, b^{*}\in B^{*}$.
Moreover, the restrictions of $ \dashv'_{d},\vd'_{d} $ on $C=A\otimes B$ and $C^{*}=A^{*}\otimes B^{*}$ coincide with $  \dashv_{C}, \vd_{C}$ and $\dashv_{C^{*}}, \vd_{C^{*}}$ respectively.
It is also straightforward to check that the natural nondegenerate antisymmetric bilinear form $\omega_{d}$ given by \meqref{eq:omd} is invariant on $(C\oplus C^{*},\dashv'_{d},\vd'_{d})$.
Note that the definition of the isomorphism of Manin triples of diassociative algebras is given similarly to that of Manin triple of  Lie algebras (and every Manin triple of diassociative algebras is isomorphic to a standard one).
Hence we have the following result.

\begin{lemma}\mlabel{2537}
	With the notations above, $( (C\oplus C^{*},\dashv'_{d},\vd'_{d},\omega_{d}),(C,\dashv_{C},\vd_{C}),(C^{*},\dashv_{C^{*}},\vd_{C^{*}}) )$ is a standard Manin triple of diassociative algebras. Moreover, the two Manin triples of diassociative  algebras $((C\oplus D,\dashv_{d},\vd_{d},\varpi),(C,\dashv_{C},\vd_{C}), (D,\dashv_{D},\vd_{D}))$ and $( (C\oplus C^{*},\dashv'_{d},\vd'_{d},\omega_{d}),(C,\dashv_{C},\vd_{C}),(C^{*},\dashv_{C^{*}},\vd_{C^{*}}) )$  are isomorphic through $f$.
\end{lemma}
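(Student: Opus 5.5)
The plan is to follow the Lie algebra analogue (Lemma~\mref{2337}), with the roles of the two tensor factors swapped: the ``dual'' direction now sits in the associative factor $A\oplus A^*$, and $\varphi$ acts on the perm factor $B$. There are two claims.
\begin{enumerate}
\item $((C\oplus C^*,\dashv'_d,\vd'_d,\omega_d),C,C^*)$ is a standard Manin triple of diassociative algebras.
\item $f$ is an isomorphism of Manin triples from the triple of Proposition~\mref{2490} onto this one.
\end{enumerate}
I will prove the second claim first. The first then follows by transport of structure.

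\textbf{The map $f$ is an isomorphism.} Since $\dashv'_d,\vd'_d$ are defined by \meqref{2528}--\meqref{2529} as the transports of $\dashv_d,\vd_d$ along the linear isomorphism $f$, the space $(C\oplus C^*,\dashv'_d,\vd'_d)$ is a diassociative algebra. Moreover, $f$ is an algebra isomorphism by construction. By definition, $f$ is the identity on $C=A\otimes B$ and maps $D=A^*\otimes B$ onto $A^*\otimes B^*=C^*$.

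Next I check that $f$ intertwines the products on the subalgebras. On $C$, $f$ is the identity, so $\dashv'_d$ and $\vd'_d$ restrict to $\dashv_C$ and $\vd_C$. On $D$, I compute directly. For $y^*\otimes b^*,\,w^*\otimes d^*\in C^*$,
\[
(y^*\otimes b^*)\dashv'_d(w^*\otimes d^*)=f\bigl(y^*\cdot_{A^*}w^*\otimes \varphi^{-1}(b^*)\circ_B\varphi^{-1}(d^*)\bigr)=y^*\cdot_{A^*}w^*\otimes b^*\circ_{B^*}d^*.
\]
This equals $(y^*\otimes b^*)\dashv_{C^*}(w^*\otimes d^*)$ by \meqref{eq:ap}. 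The computation for $\vd'_d$ is the same, with the order of the $B$-factors swapped.

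\textbf{Compatibility of the forms.} It remains to show $\omega_d(f(u),f(v))=\varpi(u,v)$, where $\omega_d$ is the form of \meqref{eq:omd} with $C^*=A^*\otimes B^*$ paired with $C$ via $\langle y^*\otimes b^*,z\otimes c\rangle=\langle y^*,z\rangle\langle b^*,c\rangle$. For $u=x\otimes a+y^*\otimes b$ and $v=z\otimes c+w^*\otimes d$,
\[
\omega_d(f(u),f(v))=\langle y^*,z\rangle\langle\varphi(b),c\rangle-\langle x,w^*\rangle\langle a,\varphi(d)\rangle=\langle y^*,z\rangle\omega(b,c)-\langle x,w^*\rangle\omega(d,a).
\]
By antisymmetry of $\omega$, $-\omega(d,a)=\omega(a,d)$, so this equals $\varpi(u,v)$. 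This sign bookkeeping, coming from the antisymmetry of $\omega$ and the minus sign in \meqref{eq:omd}, is the only delicate point of the proof. The ordering of the pairing of $B^*$ with $B$ must be chosen consistently with \meqref{eq:omd}.

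\textbf{Standard Manin triple.} Since $f$ is an isomorphism of diassociative algebras preserving the forms, the invariance of $\varpi$ (Proposition~\mref{2490}) transports to invariance of $\omega_d$ on $(C\oplus C^*,\dashv'_d,\vd'_d)$. Nondegeneracy also transports, and isotropy of $C$ and $C^*$ is immediate from \meqref{eq:omd}. Combined with the subalgebra identifications above, this shows the first claim. The same computations show that $f$ carries the Manin triple of Proposition~\mref{2490} onto the standard one, which is the second claim.
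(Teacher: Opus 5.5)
Your proof is correct and follows the paper's argument. You transport $(\dashv_d,\vdash_d)$ along $f$, check that the restrictions to $C$ and $C^*$ are $(\dashv_C,\vdash_C)$ and $(\dashv_{C^*},\vdash_{C^*})$, and then handle the form. The one difference is minor: you obtain invariance of $\omega_d$ from the identity $\omega_d(f(u),f(v))=\varpi(u,v)$ together with Proposition~\mref{2490}, rather than by the direct check the paper calls straightforward. That same identity is what the paper leaves implicit when it asserts that the two Manin triples are isomorphic through $f$.
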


Combining Corollary \mref{coro:equiv}, Proposition \mref{2486},
Proposition \mref{2490} and 
 Lemma \mref{2537} together, we have the following result, which derives diassociative bialgebras from quadratic perm algebras and  ASI bialgebras.

\begin{prop}\mlabel{prop:ass-perm-dia}
	Let $(A,\cdot_{A},\Delta)$ be an ASI bialgebra and $((A\oplus A^{*},\cdot_{d},\mathcal{B}_{s}),(A,\cdot_{A}), (A^{*},\cdot_{A^{*}}))$ be the corresponding double construction of Frobenius algebras.
	Let $(B,\circ_{B},\omega)$ be a quadratic perm algebra.
	 Then there is a standard Manin triple of diassociative algebras $( (C\oplus C^{*},\dashv'_{d},\vd'_{d},\omega_{d}),(C,\dashv_{C},\vd_{C}),(C^{*},\dashv_{C^{*}},\vd_{C^{*}}) )$, where $C=A\otimes B$, $\dashv'_{d},\vd'_{d}$ are given by \meqref{2528}-\meqref{2529}  and $\omega_{d}$ is given by \meqref{eq:omd}. Consequently, there is a diassociative bialgebra $(C,\dashv_{C},\vd_{C},\Delta_{\dv_{C}},\Delta_{\vd_{C}})$, where $\Delta_{\dv_{C}},\Delta_{\vd_{C}}$ are the linear duals of $\dashv_{C^{*}},\vd_{C^{*}}$ respectively.
\end{prop}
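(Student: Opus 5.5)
The plan is to chain the equivalences already established, in the same way Proposition~\mref{prop:dend-dia-Lie} was obtained. Start from the ASI bialgebra $(A,\cdot_A,\Delta)$. By Proposition~\mref{2486}, it corresponds to a double construction of Frobenius algebras $((A\oplus A^*,\cdot_d,\mathcal{B}_s),(A,\cdot_A),(A^*,\cdot_{A^*}))$, where $\cdot_{A^*}$ is the linear dual of $\Delta$. Feed this, together with the quadratic perm algebra $(B,\circ_B,\omega)$, into Proposition~\mref{2490}. This produces a Manin triple of diassociative algebras $((C\oplus D,\dashv_d,\vd_d,\varpi),(C,\dashv_C,\vd_C),(D,\dashv_D,\vd_D))$ with $C=A\ot B$ and $D=A^*\ot B$.

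The substantive step is Proposition~\mref{2490} itself, which is attributed to Proposition~\mref{prop:ass-perm}. I would check it as follows. Note first that $(A\oplus A^*)\ot B$ equipped with $\dashv_d,\vd_d$ is exactly the diassociative algebra of Proposition~\mref{prop:ass-perm}, built from the associative algebra $(A\oplus A^*,\cdot_d)$ and the perm algebra $(B,\circ_B)$. Since $\mathcal{B}_s$ is a symmetric invariant nondegenerate form, Proposition~\mref{prop:ass-perm} shows that $\varpi=\mathcal{B}_s\ot\omega$ is a nondegenerate antisymmetric invariant form. Here $\mathcal{B}_s(x+y^*,z+w^*)=\langle x,w^*\rangle+\langle y^*,z\rangle$ matches the stated formula for $\varpi$. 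Next, $C$ and $D$ are subalgebras because $A$ and $A^*$ are subalgebras of $(A\oplus A^*,\cdot_d)$. Finally, $C$ and $D$ are isotropic because $A$ and $A^*$ are isotropic for $\mathcal{B}_s$.

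Then apply Lemma~\mref{2537}. Transport the structure along $f=\id\oplus(\id_{A^*}\ot\varphi)$, where $\varphi:B\to B^*$ is induced by $\omega$, so that $D=A^*\ot B$ becomes $C^*=A^*\ot B^*$. Under $f$, $\varpi$ becomes $\omega_d$ of \meqref{eq:omd}. The point is that
\[
\langle x\ot a,\,y^*\ot\varphi(b)\rangle=\langle x,y^*\rangle\,\omega(b,a)=-\langle x,y^*\rangle\,\omega(a,b),
\]
and this sign must be compared with the definition of $\omega_d$. This is the one place I expect a possible sign or ordering mismatch. If one arises, I would fix it by replacing $\varphi$ with $-\varphi$, or by identifying via $\langle\varphi(a),b\rangle=\omega(b,a)$. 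Either choice is still an algebra isomorphism $(B,\circ_B)\cong(B^*,\circ_{B^*})$ once $\circ_{B^*}$ is defined by transport, so nothing else in the argument changes. The restrictions of $\dashv'_d,\vd'_d$ to $C$ and $C^*$ are then $\dashv_C,\vd_C$ and the induced structure $\dashv_{C^*},\vd_{C^*}$ from $(A^*,\cdot_{A^*})$ and $(B^*,\circ_{B^*})$, because $f$ restricted to $D$ is $\id\ot\varphi$, which is an isomorphism of the tensor structures. This gives the standard Manin triple asserted in the statement.

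Lastly, apply Corollary~\mref{coro:equiv}, implication \meqref{r2}$\Rightarrow$\meqref{r3}, to the diassociative algebras $(C,\dashv_C,\vd_C)$ and $(C^*,\dashv_{C^*},\vd_{C^*})$. This yields that $(C,\dashv_C,\vd_C,\Delta_{\dv_C},\Delta_{\vd_C})$ is a diassociative bialgebra, with the coproducts the linear duals of $\dashv_{C^*},\vd_{C^*}$.

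The main obstacle is the bookkeeping in Lemma~\mref{2537}: confirming that the transported form is exactly $\omega_d$ (the sign convention above) and that the transported products on $C^*$ are precisely the tensor-product diassociative structure. Everything else is formal composition of earlier results.
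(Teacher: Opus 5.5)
Your proposal is correct and follows the paper's own route: Proposition~\ref{2486}, then Proposition~\ref{2490} (via Proposition~\ref{prop:ass-perm} applied to $(A\oplus A^*,\cdot_d,\mathcal{B}_s)$), then the transport in Lemma~\ref{2537}, and finally Corollary~\ref{coro:equiv}. The sign mismatch you feared does not occur. The minus sign in $\omega_d$ cancels the one coming from the antisymmetry of $\omega$, so $\omega_d(f(X),f(Y))=\varpi(X,Y)$ holds exactly with the paper's $\varphi$, and no change is needed; note also that replacing $\varphi$ by $-\varphi$ would negate $\circ_{B^*}$, and hence the products on $C^*$, contrary to your claim that nothing else changes.
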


\begin{remark}
	Similar to the above discussions, the tensor product of a symmetric Frobenius algebra and a perm bialgebra \mcite{Hou24, Lin} also yields a Manin triple of diassociative algebras. Moreover, we can also derive diassociative bialgebras from symmetric Frobenius algebras and  perm bialgebras.
\end{remark}

\smallskip

\noindent {\bf Acknowledgements}: This work is supported by NSFC (12461002, 12326324), the China Postdoctoral Science Foundation (2024T005TJ, 
2024M761507), the Postdoctoral Fellowship Program of CPSF (GZC20240755) and Nankai Zhide Foundation.
Shanghua Zheng thanks the Chern Institute of Mathematics at Nankai University for hospitality.

\noindent
{\bf Declaration of interests. } The authors have no conflicts of interest to disclose.

\noindent
{\bf Data availability. } Data sharing is not applicable as no new data were created or analyzed.

\end{document}